\documentclass[reqno,12pt,a4paper]{amsart}

\voffset=-0.1in \topmargin=0in \headheight=8pt \headsep=0.5in \textheight=8.9in

\hoffset=-0.5in \leftmargin=0.0in \rightmargin=0.0in \textwidth=6.1in

\parindent=5ex

\usepackage{amsmath}
\usepackage{amssymb}
\usepackage{amsfonts}
\usepackage{latexsym}
\usepackage{amsthm}

\usepackage{bm}

\usepackage{graphicx}

\begin{document}


\renewcommand{\theequation}{\arabic{section}.\arabic{equation}}
\theoremstyle{plain}
\newtheorem{theorem}{\bf Theorem}[section]
\newtheorem{lemma}[theorem]{\bf Lemma}
\newtheorem{corollary}[theorem]{\bf Corollary}
\newtheorem{proposition}[theorem]{\bf Proposition}
\newtheorem{definition}[theorem]{\bf Definition}
\newtheorem{remark}[theorem]{\bf Remark}

\def\a{\alpha}  \def\cA{{\mathcal A}}     \def\bA{{\bf A}}  \def\mA{{\mathscr A}}
\def\b{\beta}   \def\cB{{\mathcal B}}     \def\bB{{\bf B}}  \def\mB{{\mathscr B}}
\def\g{\gamma}  \def\cC{{\mathcal C}}     \def\bC{{\bf C}}  \def\mC{{\mathscr C}}
\def\G{\Gamma}  \def\cD{{\mathcal D}}     \def\bD{{\bf D}}  \def\mD{{\mathscr D}}
\def\d{\delta}  \def\cE{{\mathcal E}}     \def\bE{{\bf E}}  \def\mE{{\mathscr E}}
\def\D{\Delta}  \def\cF{{\mathcal F}}     \def\bF{{\bf F}}  \def\mF{{\mathscr F}}
\def\c{\chi}    \def\cG{{\mathcal G}}     \def\bG{{\bf G}}  \def\mG{{\mathscr G}}
\def\z{\zeta}   \def\cH{{\mathcal H}}     \def\bH{{\bf H}}  \def\mH{{\mathscr H}}
\def\e{\eta}    \def\cI{{\mathcal I}}     \def\bI{{\bf I}}  \def\mI{{\mathscr I}}
\def\p{\psi}    \def\cJ{{\mathcal J}}     \def\bJ{{\bf J}}  \def\mJ{{\mathscr J}}
\def\vT{\Theta} \def\cK{{\mathcal K}}     \def\bK{{\bf K}}  \def\mK{{\mathscr K}}
\def\k{\kappa}  \def\cL{{\mathcal L}}     \def\bL{{\bf L}}  \def\mL{{\mathscr L}}
\def\l{\lambda} \def\cM{{\mathcal M}}     \def\bM{{\bf M}}  \def\mM{{\mathscr M}}
\def\L{\Lambda} \def\cN{{\mathcal N}}     \def\bN{{\bf N}}  \def\mN{{\mathscr N}}
\def\m{\mu}     \def\cO{{\mathcal O}}     \def\bO{{\bf O}}  \def\mO{{\mathscr O}}
\def\n{\nu}     \def\cP{{\mathcal P}}     \def\bP{{\bf P}}  \def\mP{{\mathscr P}}
\def\r{\rho}    \def\cQ{{\mathcal Q}}     \def\bQ{{\bf Q}}  \def\mQ{{\mathscr Q}}
\def\s{\sigma}  \def\cR{{\mathcal R}}     \def\bR{{\bf R}}  \def\mR{{\mathscr R}}
                \def\cS{{\mathcal S}}     \def\bS{{\bf S}}  \def\mS{{\mathscr S}}
\def\t{\tau}    \def\cT{{\mathcal T}}     \def\bT{{\bf T}}  \def\mT{{\mathscr T}}
\def\f{\phi}    \def\cU{{\mathcal U}}     \def\bU{{\bf U}}  \def\mU{{\mathscr U}}
\def\F{\Phi}    \def\cV{{\mathcal V}}     \def\bV{{\bf V}}  \def\mV{{\mathscr V}}
\def\P{\Psi}    \def\cW{{\mathcal W}}     \def\bW{{\bf W}}  \def\mW{{\mathscr W}}
\def\o{\omega}  \def\cX{{\mathcal X}}     \def\bX{{\bf X}}  \def\mX{{\mathscr X}}
\def\x{\xi}     \def\cY{{\mathcal Y}}     \def\bY{{\bf Y}}  \def\mY{{\mathscr Y}}
\def\X{\Xi}     \def\cZ{{\mathcal Z}}     \def\bZ{{\bf Z}}  \def\mZ{{\mathscr Z}}
\def\O{\Omega}

\def\ve{\varepsilon}   \def\vt{\vartheta}    \def\vp{\varphi}    \def\vk{\varkappa}

\def\Z{{\mathbb Z}}    \def\R{{\mathbb R}}   \def\C{{\mathbb C}}
\def\T{{\mathbb T}}    \def\N{{\mathbb N}}   \def\dD{{\mathbb D}}
\def\H{{\mathbb H}}


\def\la{\leftarrow}              \def\ra{\rightarrow}            \def\Ra{\Rightarrow}
\def\ua{\uparrow}                \def\da{\downarrow}
\def\lra{\leftrightarrow}        \def\Lra{\Leftrightarrow}
\def\rra{\rightrightarrows}


\def\lt{\biggl}                  \def\rt{\biggr}
\def\ol{\overline}               \def\wt{\widetilde}


\let\ge\geqslant                 \let\le\leqslant
\def\lan{\langle}                \def\ran{\rangle}
\def\/{\over}                    \def\iy{\infty}
\def\sm{\setminus}               \def\es{\emptyset}
\def\ss{\subset}                 \def\ts{\times}
\def\pa{\partial}                \def\os{\oplus}
\def\om{\ominus}                 \def\ev{\equiv}
\def\iint{\int\!\!\!\int}        \def\iintt{\mathop{\int\!\!\int\!\!\dots\!\!\int}\limits}
\def\el2{\ell^{\,2}}             \def\1{1\!\!1}
\def\sh{\sharp}


\def\Area{\mathop{\mathrm{Area}}\nolimits}
\def\arg{\mathop{\mathrm{arg}}\nolimits}
\def\const{\mathop{\mathrm{const}}\nolimits}
\def\det{\mathop{\mathrm{det}}\nolimits}
\def\diag{\mathop{\mathrm{diag}}\nolimits}
\def\diam{\mathop{\mathrm{diam}}\nolimits}
\def\dim{\mathop{\mathrm{dim}}\nolimits}
\def\dist{\mathop{\mathrm{dist}}\nolimits}
\def\Im{\mathop{\mathrm{Im}}\nolimits}
\def\Int{\mathop{\mathrm{Int}}\nolimits}
\def\Ker{\mathop{\mathrm{Ker}}\nolimits}
\def\Length{\mathop{\mathrm{Length}}\nolimits}
\def\Lip{\mathop{\mathrm{Lip}}\nolimits}
\def\rank{\mathop{\mathrm{rank}}\limits}
\def\Ran{\mathop{\mathrm{Ran}}\nolimits}
\def\Re{\mathop{\mathrm{Re}}\nolimits}
\def\Res{\mathop{\mathrm{Res}}\nolimits}
\def\res{\mathop{\mathrm{res}}\limits}
\def\sign{\mathop{\mathrm{sign}}\nolimits}
\def\supp{\mathop{\mathrm{supp}}\nolimits}
\def\Tr{\mathop{\mathrm{Tr}}\nolimits}

\mathchardef\gRe="023C
\mathchardef\gIm="023D


\def\SLE{\mathrm{SLE}}
\def\DS{\diamondsuit}
\def\CaraTo{\mathop{\longrightarrow}\limits^{\mathrm{Cara}}}



\def\mesh{\delta}
\newcommand\weightG[1]{\mu^\mesh_\G(#1)}
\newcommand\weightE[2]{{\mu_{#1#2}}}
\newcommand\weightDS[1]{\mu^\mesh_\DS(#1)}
\newcommand\weightL[1]{\mu^\mesh_\L(#1)}
\newcommand\dhm[4]{\o^{#1}(#2;#3;#4)}
\renewcommand\hm[3]{\o(#1;#2;#3)}
\def\dpa{\partial^\mesh}
\def\dopa{\overline{\partial}\vphantom{\partial}^\mesh}
\def\sps{($\spadesuit$)}

\def\ccdot{\,\cdot\,}

\def\Pr{\mathrm{Proj}}

\title{Discrete complex analysis on isoradial graphs}

\date{\today}
\author[Dmitry Chelkak]{Dmitry Chelkak$^\mathrm{a,c}$}

\author[Stanislav Smirnov]{Stanislav Smirnov$^\mathrm{b,c}$}

\thanks{\textsc{${}^\mathrm{A}$ St.Petersburg Department of Steklov Mathematical Institute (PDMI RAS).
Fontanka~27, 191023 St.Petersburg, Russia.}}

\thanks{\textsc{${}^\mathrm{B}$ Section de Math\'ematiques, Universit\'e de Gen\`eve.
2-4 rue du Li\`evre, Case postale~64, 1211 Gen\`eve 4, Suisse.}}

\thanks{\textsc{${}^\mathrm{C}$ Chebyshev Laboratory, Department of Mathematics and
Mechanics, Saint-Petersburg State University. 14th Line, 29b, 199178 Saint-Petersburg,
Russia.}}

\thanks{{\it E-mail addresses:} \texttt{dchelkak@pdmi.ras.ru, Stanislav.Smirnov@unige.ch}}

\begin{abstract}
We study discrete complex analysis
and potential theory
on a large family of planar graphs,
the so-called isoradial ones.
Along with discrete analogues of several
classical results,
we prove uniform convergence of
discrete harmonic measures, Green's functions and
Poisson kernels to their continuous counterparts.
Among other applications, the results can be used
to establish
universality of the critical Ising and other lattice models.
\end{abstract}

\subjclass{39A12, 52C20, 60G50}

\keywords{Discrete harmonic functions, discrete holomorphic functions, discrete potential theory, isoradial graphs, random walk}

\maketitle

\section{Introduction}

\subsection{Motivation}

This paper is concerned with discrete versions
of complex analysis and potential theory in the complex plane.
There are many discretizations of harmonic and holomorphic functions,
which have a long history.
Besides proving discrete analogues of the usual complex analysis
theorems, one can ask to which extent
discrete objects approximate their continuous counterparts.
This can be used to give ``discrete'' proofs of continuous theorems
(see, e.g., \cite{L-F55} for such a proof of the Riemann mapping theorem)
or to prove convergence of discrete objects to continuous ones.
One of the goals of our paper is to provide tools
for establishing convergence of 
critical 2D lattice models to conformally invariant scaling limits.

There are no ``canonical'' discretizations of Laplace and Cauchy-Riemann operators, the most
studied ones  (and perhaps the most convenient) are for the square grid. There are also
definitions for other regular lattices, as well as generalizations to larger families of
embedded into $\C$ planar graphs (see \cite{Sm10} and references therein).

We will work with isoradial graphs (or, equivalently, rhombic lattices) where all faces can be
inscribed into circles of equal radii. Rhombic lattices were introduced by R.~J.~Duffin
\cite{Duf68} in late sixties as (perhaps) the largest family of graphs for which the
Cauchy-Riemann operator admits a nice discretization, similar to that for the square lattice.
They reappeared recently as isoradial graphs in the work of Ch.~Mercat \cite{Mer01} and
R.~Kenyon \cite{Ken02}, as the largest family of graphs where certain 2D statistical
mechanical models (notably the Ising and dimer models) preserve some integrability properties.
Note that isoradial graphs can be quite irregular -- see e.g. Fig.~\ref{Fig:IsoGraph}A. It was
shown by R.~Kenyon and J.-M.~Schlenker \cite{KSch04} that many planar graphs admit isoradial
embeddings -- in fact, there are only two topological obstructions. Also isoradial graphs have
a well-defined mesh size $\mesh$ -- the common radius of the circumscribed circles.

It is thus natural to consider this family of graphs
in the context of {\em universality} for 2D models with (conjecturally) conformally invariant scaling limits
(as the mesh tends to zero).

The primary goal of our paper is to provide a ``toolbox'' of discrete versions of continuous
results (particularly ``hard'' estimates) sufficient to perform a passage to the scaling
limit. Of particular interest to us is the {\em critical Ising model}, and this paper starts a
series devoted to its {\em universality} (which means that the scaling limit is independent of
the shape of the lattice). See \cite{Sm06}, \cite{ChSm08} for the strategy of our proof,
\cite{ChSm09} for the convergence of certain discrete holomorphic observables and \cite{Sm07}
for the square lattice case.

Our results can also be applied to other lattice models.
The uniform convergence of the discrete Poisson kernel (\ref{DefDiscrPoisson})
already implies {universality} for the 
loop-erased random walks on isoradial graphs.
Namely, our paper together with \cite{LSchW04} implies that
their trajectories converge to SLE$(2)$
curves (see Sect.~3.2, especially Remark~3.6, in \cite{LSchW04}).
There are several other fields where discrete harmonic and
discrete holomorphic functions defined on isoradial graphs play essential role and hence where
our results may be useful: approximation of conformal maps \cite{Buck08}; discrete integrable
systems \cite{BMS05}; and the theory of discrete Riemann surfaces \cite{Mer07}.

Local convergence of discrete harmonic (holomorphic) functions to continuous harmonic
(holomorphic) functions is a rather simple fact. Moreover, it was shown by Ch.~Mercat
\cite{Mer02} that each continuous holomorphic function can be approximated by discrete ones.
Thus, the discrete theory is close to the continuous theory ``locally.'' Nevertheless, until
recently almost nothing was known about the ``global''\ convergence of the functions defined
in discrete domains as the solutions of some discrete {\it boundary value problems} to their
continuous counterparts. This setup goes back to the seminal paper by R.~Courant,
K.~Friedrichs and H.~Lewy \cite{CFL28}, where convergence is established for harmonic
functions with smooth Dirichlet boundary conditions in smooth domains, discretized by the
square lattice, but not much progress has occurred since. For us it is important to consider
discrete domains with possibly very {\it rough boundaries} and to establish convergence
without any regularity assumptions about them. Besides being of independent interest, this is
indispensable for establishing convergence to Oded Schramm's SLEs, since the latter curves are
fractal.
\begin{figure}
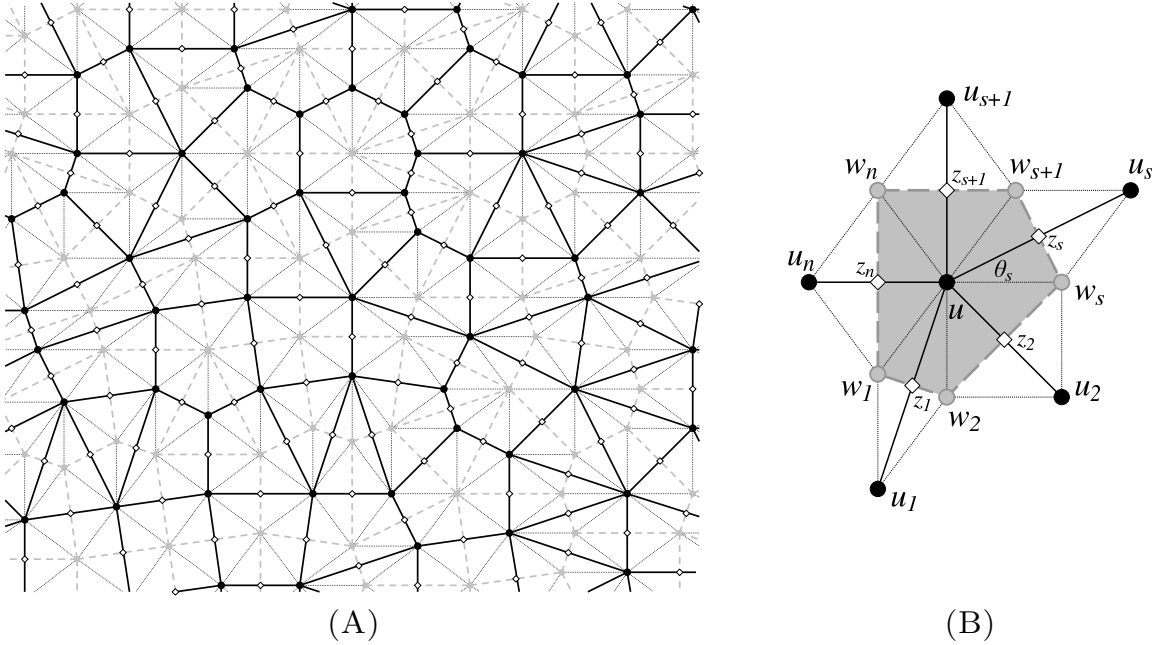

\centering{
\begin{minipage}[c]{0.6\textwidth}
\centering{\includegraphics[angle=90, height=0.35\textheight]{IsoGraph.ps}}

\centering{\textsc{(A)}}
\end{minipage}
\hskip 0.05\textwidth
\begin{minipage}[c]{0.34\textwidth}
\vspace{0.05\textheight}

\centering{\includegraphics[height=0.25\textheight]{Notations.ps}}

\vspace{0.05\textheight}

\centering{\textsc{(B)}}
\end{minipage}
}\caption{\textsc{(A)}\label{Fig:IsoGraph} An isoradial graph $\G$ (black vertices, solid
lines), its dual isoradial graph $\G^*$ (gray vertices, dashed lines), the corresponding
rhombic lattice or quad-graph (vertices $\L=\G\cup\G^*$, thin lines, rhombic faces) and the
set $\DS=\L^*$ of rhombi centers (diamond-shaped points). \textsc{(B)} Local notations near
$u\in\G$. The dual face $W(u)$ is shaded.\label{Fig:Notations}}
\end{figure}

\subsection{Preliminary definitions.}
The planar graph $\G$ embedded in $\C$ is called {\it isoradial} iff each face is inscribed
into a circle of a common radius $\mesh$. If all circle centers are inside the corresponding
faces, then one can naturally embed the dual graph $\G^*$ in $\C$ isoradially with the same
$\mesh$, taking the circle centers as vertices of $\G^*$. The name {\it rhombic lattice} is
due to the fact that all quadrilateral faces of the corresponding bipartite graph $\L$ (having
the vertex set $\G\cup\G^*$) are rhombi with sides of length $\mesh$ (see
Fig.~\ref{Fig:IsoGraph}A). We will often work with rhombi half-angles, denoted by $\theta$,
for which we also require the following mild but indispensable and widely used assumption
(see, e.g., \cite{Cia78}, pp. 124 and 130, where the similar assumption is called Zl\'amal's
condition):
\begin{quotation}
{\bf \sps} the rhombi half-angles are uniformly bounded from $0$ and $\frac{1}{2}\pi$ (in
other words, all these angles belong to $[\eta,\frac{1}{2}\pi-\eta]$ for some fixed $\eta>0$),
i.e., there are no ``too flat'' rhombi in $\L$.
\end{quotation}
Note that condition \sps~ implies that for each $u_1,u_2\in\G$ the Euclidean distance
$|u_2-u_1|$ and the combinatorial distance $\mesh\cdot d_\G(u_1,u_2)$ (where $d_\G(u_1,u_2)$
is the minimal number of vertices in the path connecting $u_1$ and $u_2$ in $\G$) are
comparable. Below we often use the notation {\it const} for absolute positive constants that
does not depend on the mesh $\mesh$ or the graph structure but, in principle, {\it may depend
on $\eta$}.

The function $H:\O^\mesh_\G\to\R$ defined on some subset (discrete domain) $\O^\mesh_\G $ of
$\G$ is called {\it discrete harmonic}, if
\begin{equation}
\label{DiscrHarmDef} \sum_{s=1}^n\tan\theta_s\cdot(H(u_s)\!-\!H(u))=0
\end{equation}
at all $u\in\O^\mesh_\G$ where the left-hand side makes sense.
Here $\theta_s$ denotes the half-angles of the corresponding rhombi,
see also Fig. \ref{Fig:Notations}B for notations.
As usual, this definition is
closely related to the {\it random walk on $\G$} such that the probability to make the next
step from $u$ to $u_k$ is proportional to $\tan\theta_k$.
Namely, $\mathrm{RW}(t+1)=\mathrm{RW}(t)+\xi^{(t)}_{\mathrm{RW}(t)}$, where the increments
$\xi^{(t)}$ are independent with distributions
\[
\bP(\xi_u=u_k-u)=\frac{\tan\theta_k}{\sum_{s=1}^n\tan\theta_s}\quad \mathrm{for}\ \ k=1,..,n.
\]
Under our assumption all these probabilities are uniformly bounded from $0$.
Note that the choice of $\tan\theta_s$ as the edge weights in (\ref{DiscrHarmDef}) gives
\begin{equation}
\label{RandomWalkParam} \bE[\Re\xi_u]=\bE[\Im\xi_u]=0\quad\mathrm{and}\quad
\begin{array}{l}
\displaystyle \bE[(\Re\xi_u)^2]=\bE[(\Im\xi_u)^2]=T_u, \vphantom{\big|_|}\cr
\bE[\Re\x_u\Im\x_u]=0\vphantom{\big|^|},
\end{array}
\end{equation}
where $T_u=\mesh^2\cdot {\sum_{s=1}^n\sin2\theta_s}\big/{\sum_{s=1}^n\tan\theta_s}$ (see Lemma
\ref{ApproxLemma}). Our results may be directly interpreted as the convergence of the hitting
probabilities for this random walk. Moreover, condition \sps~ implies that quadratic
variations satisfy {$0<\const\cdot\mesh^2\le T_u\le 2\mesh^2$}, and so one can define a proper
lazy random walk (or make a time re-parametrization) according to (\ref{RandomWalkParam}) so
that it converges to standard 2D Brownian motion.

\subsection{Main results}
Let $\O^\mesh_\G\ss\G$ be some bounded, simply connected discrete domain and
$\Int\O^\mesh_\G$, $\pa\O^\mesh_\G$ denote the sets of interior and boundary vertices,
respectively (see Sect. \ref{SectBasicDefs} for more accurate definitions). For
$u\in\Int\O^\mesh_\G$ and $E\ss\pa\O^\mesh_\G$ the discrete harmonic measure $\dhm\mesh u E
{\O^\mesh_\G}$ is the probability of the event that the random walk on $\G$ starting at $u$
first exits $\O^\mesh_\G$ through $E$. Equivalently, $\dhm\mesh\cdot E {\O^\mesh_\G}$ is the
unique solution of the following discrete Dirichlet boundary value problem:
\begin{itemize}
\item $\dhm\mesh{\ccdot} E {\O^\mesh_\G}$ is discrete harmonic everywhere in $\O^\mesh_\G$;

\item $\dhm\mesh a E {\O^\mesh_\G}=1$ for $a\in E$ and
$\dhm\mesh a E {\O^\mesh_\G}=0$ for $a\in\pa\O^\mesh_\G\setminus E$.
\end{itemize}

We prove {\it uniform} (with respect to the shape $\O^\mesh_\G$ and the structure of the
underlying isoradial graph) {\it convergence} of the basic objects of the discrete potential
theory and their discrete {\it gradients} (which are discrete holomorphic functions defined on
subsets of $\DS=\L^*$, see Sect.~\ref{SectDiscrHol} and Definition~\ref{DefUniformC1conv} for
further details) to continuous counterparts. Namely, we consider
\begin{itemize}
\item
solution of the discrete Dirichlet problem with continuous boundary values;
\item
discrete harmonic measure $\dhm\mesh{\ccdot}{a^\mesh b^\mesh}{\O^\mesh_\G}$ of boundary arcs
$a^\mesh b^\mesh\ss\pa\O^\mesh_\G$;
\item
discrete Green's function $G^\mesh_{\O^\mesh_\G}(\ccdot;v^\mesh)$,
$v^\mesh\in\Int\O^\mesh_\G$;
\item
discrete Poisson kernel
\begin{equation}
\label{DefDiscrPoisson} P^\mesh(\ccdot;v^\mesh;a^\mesh;\O^\mesh_\G):=
\frac{\dhm\mesh{\ccdot}{\{a^\mesh\}}{\O^\mesh_\G}}{\dhm\mesh{v^\mesh}{\{a^\mesh\}}{\O^\mesh_\G}},\quad
a^\mesh\in\pa\O^\mesh_\G,
\end{equation}
normalized at the interior point $v^\mesh\in\Int\O^\mesh_\G$;
\item
discrete Poisson kernel $P^\mesh_{o^\mesh}(\ccdot;a^\mesh;\O^\mesh_\G)$,
$a^\mesh\in\pa\O^\mesh_\G$, normalized at the boundary point $o^\mesh\in\pa\O^\mesh_\G$ by
some analogue of the condition $[\pa_n P](o^\mesh)=1$ (we assume that the boundary $\pa
\O^\mesh_\G$ is ``straight'' near $o^\mesh$, see precise definitions in
Sect.~\ref{SectBoundaryNorm}).
\end{itemize}

\subsection{Organization of the paper}
We begin with the exposition of  basic facts concerning discrete harmonic and discrete
holomorphic functions on isoradial graphs. The larger part of Sect.~\ref{SectBasicFacts}
follows \cite{Duf68}, \cite{Mer01}, \cite{Ken02}, \cite{Mer07} and \cite{Buck08}.
Unfortunately, none of these papers contains all the preliminaries that we need. Besides, the
basic notation (sign and normalization of the Laplacian, definition of the discrete
exponentials and so on) varies from source to source, so for the convenience of the reader we
collected all preliminaries in the same place. Note that our notation (e.g., the normalization
of discrete Green's functions and the parametrization of discrete exponentials) is chosen to
be as close in the limit to the standard continuous objects as possible. Also, we prefer to
deal with functions rather than to use the language of forms or cochains \cite{Mer07} which is
more adapted for the topologically nontrivial cases.

The main part of our paper is Sect.~\ref{SectConvThms}, where the convergence theorems are
proved. The proofs essentially use compactness arguments, so it does not give any 
estimate for the convergence rate. Thus, as in \cite{Sm07}, we derive the ``uniform''\
convergence from the ``pointwise''\ one, using the compactness of the set of bounded
simply connected domains in the Carath\'edory topology (see Proposition~\ref{PropUniC1}).
The other ingredients are the classical Arzel\`a-Ascoli theorem, which allows us to choose a
convergent subsequence of discrete harmonic functions (see Proposition~\ref{PropUniformComp})
and the weak Beurling-type estimate (Proposition~\ref{PropWeakBeurling}) which we use in order
to identify the boundary values of the limiting harmonic function. We prove
\mbox{$C^1$-convergence}, but stop short of discussing the $C^\infty$ topology since there is
no straightforward definition of the second discrete derivative for functions on isoradial
graphs (see Sect.~\ref{SectElemFactsHol}). Note however that a way to overcome this difficulty
was suggested in \cite{Buck08}.

\medskip

\noindent {\bf Acknowledgments.} We would like to thank Vincent Beffara for many helpful
comments. Some parts of this paper were written at the MFO, Oberwolfach (during the
Oberwolfach-Leibniz Fellowship of the first author), and at the IH\'ES, Bures-sur-Yvette. The
authors are grateful to the Institutes for the hospitality.

This research was supported by the Swiss N.S.F., by the European Research Council AG CONFRA,
by EU RTN CODY and, on the final stage, by the Chebyshev Laboratory (Department of Mathematics
and Mechanics, Saint-Petersburg State University) under the grant 11.G34.31.0026 of the
Government of the Russian Federation. The first author was partly funded by P.Deligne's 2004
Balzan prize in Mathematics and by the grant MK-7656.2010.1.

\section{Discrete harmonic and holomorphic functions. Basic facts}
\setcounter{equation}{0}\label{SectBasicFacts}

\subsection{Basic definitions. Approximation property.}
\label{SectBasicDefs} Let $\G=\G^\mesh$ be some {\it infinite} isoradial graph embedded
into~$\C$ and $V_{\O^\mesh}\ss\G$ be some {\it connected} subset of vertices (identified with
points in $\C$). Let $E_{\O^\mesh}$ be the set of all edges (open intervals in $\C$) incident
to $V_{\O^\mesh}$ and $F_{\O^\mesh}$ be the set of all faces (open polygons in $\C$) incident
to $E_{\O^\mesh}$.

We call $\bm{\O^\mesh}:=F_{\O^\mesh}\cup E_{\O^\mesh}\cup V_{\O^\mesh}\ss\C$ the {\bf
polygonal representation} of a {\bf discrete domain} $\bm{\O^\mesh_\G}:=\Int\O^\mesh_\G\cup
\pa\O^\mesh_\G$, where {\it interior} and {\it boundary vertices} are defined as
\[
\Int\O^\mesh_\G:=V_{\O^\mesh}\quad \mathrm{and}\quad
\pa\O^\mesh_\G:=\{(a\,;(a_{\mathrm{int}}a)):\ a_{\mathrm{int}}\in V_{\O^\mesh},\
(a_{\mathrm{int}}a)\in E_{\O^\mesh},a\notin V_{\O^\mesh}\},
\]
respectively. Further, we say that $\O^\mesh_\G$ is simply connected, if $\O^\mesh$ is simply
connected. The reason for this definition of $\pa\O^\mesh_\G$ is that the same $a$ may serve
as several different boundary vertices, if it can be approached from $\Int\O^\mesh_\G$ by
several edges -- see e.g. vertices $b$ and $c$ in the Fig.~\ref{Fig:DiscrDomain}A). However,
when no confusion arises, we will often treat $\pa\O^\mesh_\G$ as a subset of $\G$, not
indicating explicitly the corresponding outgoing edges.

Below we often need some natural {\it discretizations of standard continuous domains} (e.g.,
discs and rectangles). For an open convex $D\ss\C$ we introduce $D^\mesh_\G\ss\G$ and its
polygonal representation $D^\mesh\ss\C$ by defining $\Int\O^\mesh_\G=V_{D^\mesh}$ as the
vertices of the (largest) connected component of $\G$ lying inside $D$ (see
Fig.~\ref{Fig:DiscrHalfPlane}B, Fig.~\ref{Fig:DiscrDisc}A).

\begin{figure}
\centering{
\begin{minipage}[c]{0.44\textwidth}
\centering{\includegraphics[height=0.35\textheight]{DiscrDomain.ps}}

\centering{\textsc{(A)}}
\end{minipage}
\hskip 0.05\textwidth
\begin{minipage}[c]{0.5\textwidth}
\centering{\includegraphics[height=0.35\textheight]{DiscrHalfPlane.ps}}

\centering{\textsc{(B)}}
\end{minipage}
} \caption{\label{Fig:DiscrDomain}\textsc{(A)} Discrete domain. The interior vertices are
gray, the boundary vertices are black and the outer vertices are white. Both $b$ and $c$ have
two interior neighbors, and so we treat, e.g., $(b\,;(b_{\mathrm{int}}^{(1)}b))$ and
$(b\,;(b_{\mathrm{int}}^{(2)}b))$ as different elements of $\pa\O^\mesh_\G$.
\label{Fig:DiscrHalfPlane}\textsc{(B)} Discrete half-plane $\H^\mesh$ and discrete rectangle
$R^\mesh(S,T)$. The lower, upper and vertical parts of $\pa R^\mesh_\G(S,T)$ are denoted by
$L^\mesh_\G(S)$, $U^\mesh_\G(S,T)$ and $V^\mesh_\G(S,T)$, respectively.}
\end{figure}

Let
\begin{equation}
\label{WeightGDef}
\weightG{u}:=\frac{\mesh^2}{2}\sum_{u_s\sim u}\sin 2\theta_s,
\end{equation}
be the {\it weight} of a vertex $u\in\G$, where $\theta_s$ are the half-angles of the
corresponding rhombi. Note that $\weightG{u}$ is the area of a dual face $W(u)=w_1w_2..w_n$
(see Fig. \ref{Fig:Notations}B).

Let $\phi:\O^\mesh\to\C$ be a Lipschitz (i.e., satisfying $|\phi(u_1)-\phi(u_2)|\le
C|u_1-u_2|$) function and \mbox{$\phi^\mesh:=\phi|_{\O^\mesh_\G}$} be its restriction to
$\O^\mesh_\G$. Note that all points in a dual face $W(u)$ are \mbox{$\mesh$-close} to its
center $u$. Thus, approximating values of $\phi$ on $W(u)$ by $\phi(u)$ and taking into
account that
$\Area(\O^\mesh\setminus\bigcup_{u\in\Int\O^\mesh_\G}W(u))\le\mesh\cdot\Length(\pa\O^\mesh)$,
we arrive at the simple inequality
\begin{equation}
\label{DiscrIntApprox} \lt|\sum_{u\in\Int\O^\mesh_\G}\phi^\mesh(u)\weightG{u}
-\iint_{\O^\mesh}\phi(x\!+\!iy)dxdy\rt|\le C\mesh\cdot \Area(\O^\mesh) + M\delta\cdot
\Length(\pa\O^\mesh)
\end{equation}
with the same constant $C$ and $M:=\sup\{|\phi(z)|, z\in\O^\mesh:
\dist(z,\pa\O^\mesh)\le\mesh\}$.

\begin{definition}
Let $\O^\mesh_\G$ be some connected discrete domain and $H:\O^\mesh_\G\to \R$. We define the
{\bf discrete Laplacian} of $H$ at $u\in\Int \O^\mesh_\G$ by
\[
[\D^\mesh H](u):=\frac{1}{\weightG{u}}\sum_{u_s\sim u} \tan\theta_s\cdot[H(u_s)\!-\!H(u)]
\]
(see Fig.~\ref{Fig:Notations}B for notations). We call $H$ {\bf discrete harmonic} in
$\O^\mesh_\G$ iff $[\D^\mesh H](u)=0$ at all interior vertices $u\in\Int \O^\mesh_\G$.
\end{definition}

It is easy to see that discrete harmonic functions satisfy the {\bf maximum principle}:
\begin{equation}
\label{MaxPrinciple} \max_{u\in\O^\mesh_\G} H(u) = \max_{a\in\pa\O^\mesh_\G} H(a).
\end{equation}

Further, a simple calculation shows that the {\bf discrete Green's formula}
\begin{equation}
\label{GreenFormula} \sum_{u\in\Int\O^\mesh_\G} [H\D^\mesh G -  G\D^\mesh H](u)\weightG{u}\
=\!\! \sum_{a\in \pa\O^\mesh_\G}\tan\theta_{a_{\mathrm{int}}a}\cdot
[H(a_{\mathrm{int}})G(a)-H(a)G(a_{\mathrm{int}})]
\end{equation}
holds true for any two functions $H,G:\O^\mesh_\G\to\R$. Here and below, for a boundary vertex
$(a\,;(a_{\mathrm{int}}a))$, $\theta_{a_{\mathrm{int}}a}$ denotes the half-angle of the
rhombus having ${a_{\mathrm{int}}a}$ as a diagonal.

\begin{lemma}[\bf approximation property]
\label{ApproxLemma} Let $\phi\in C^3$ be a smooth function  defined in the disc
$B(u,2\mesh)\ss\C$ for some $u\in\G$. Denote by $\phi^\mesh$  its restriction to $\G$. Then

\smallskip

\noindent (i)\phantom{i} $\D^\mesh \phi^\mesh \equiv 0$, if $\phi$ is constant or a linear
function, and

\noindent \phantom{(ii)} $\D^\mesh \phi^\mesh \equiv \D\phi \equiv 2(a+c)$, if
$\phi(x+iy)\equiv ax^2+bxy+cy^2$ is quadratic in $x$ and $y$.

\smallskip

\noindent (ii)
\[
\left|\,[\D^\mesh \phi^\mesh](u) - [\D\phi](u)\,\right| \le  \const\cdot\mesh\cdot
\sup_{B(u,2\mesh)} |D^3 \phi|.
\]
\end{lemma}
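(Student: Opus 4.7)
The plan is to prove both parts by Taylor expanding $\phi$ to second order around $u$ and checking that the weights $\tan\theta_s$ in the definition of $\Delta^\mesh$ are precisely calibrated so that linear and quadratic monomials are reproduced exactly, with the residual controlled by the third derivative. The geometric input is the cyclic closure of the rhombi incident to $u$.

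I would first fix notation. Let $\mesh e^{i\gamma_0},\ldots,\mesh e^{i\gamma_{n-1}}$ be the rhombus sides of length $\mesh$ emanating from $u$, listed cyclically, so $\gamma_s-\gamma_{s-1}=2\theta_s$ and $\gamma_n=\gamma_0+2\pi$. The neighbor $u_s\in\G$ is the vertex of the $s$-th rhombus opposite to $u$, hence $u_s-u=\mesh(e^{i\gamma_{s-1}}+e^{i\gamma_s})=2\mesh\cos\theta_s\cdot e^{i\alpha_s}$ with $\alpha_s=(\gamma_{s-1}+\gamma_s)/2$. Since $W(u)\subset B(u,2\mesh)$ and every $u_s$ lies in this disc, Taylor's formula yields
\[
\phi(u_s)-\phi(u)\ =\ D\phi(u)\cdot(u_s-u)+\tfrac12\,D^2\phi(u)(u_s-u,u_s-u)+R_s,
\]
with $|R_s|\le\const\cdot\mesh^3\max_{W(u)}|D^3\phi|$.

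The whole computation reduces to two trigonometric identities
\[
2\sin\theta_s\,e^{i\alpha_s}\ =\ -i\bigl(e^{i\gamma_s}-e^{i\gamma_{s-1}}\bigr),\qquad
\sin(2\theta_s)\,e^{2i\alpha_s}\ =\ \tfrac{1}{2i}\bigl(e^{2i\gamma_s}-e^{2i\gamma_{s-1}}\bigr),
\]
whose sums over $s=1,\ldots,n$ both telescope to zero by $\gamma_n=\gamma_0+2\pi$. The first identity gives $\sum_s\tan\theta_s(u_s-u)=2\mesh\sum_s\sin\theta_s\,e^{i\alpha_s}=0$, proving $\Delta^\mesh\phi^\mesh\equiv 0$ on affine $\phi$. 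For $\phi(x+iy)=ax^2+bxy+cy^2$ I decompose $\tfrac12 D^2\phi(v,v)=\tfrac{a+c}{2}|v|^2+\Re[\tfrac12((a-c)-ib)v^2]$ and substitute $|v|^2=4\mesh^2\cos^2\theta_s$, $v^2=4\mesh^2\cos^2\theta_s\,e^{2i\alpha_s}$. The isotropic part contributes $\sum_s\tan\theta_s\cdot 4\mesh^2\cos^2\theta_s=2\mesh^2\sum_s\sin 2\theta_s=4\weightG{u}$, so after the prefactor $\tfrac{a+c}{2}$ and division by $\weightG{u}$ it produces exactly $2(a+c)=\Delta\phi$; the anisotropic part vanishes by the second telescoping. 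This handles (i).

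For (ii), since monomials of degree $\le 2$ are matched exactly, what remains is
\[
[\Delta^\mesh\phi^\mesh](u)-[\Delta\phi](u)\ =\ \weightG{u}^{-1}\sum_s\tan\theta_s\,R_s.
\]
Under \sps{} we have $\tan\theta_s\le\const$, and since $\sum_s 2\theta_s=2\pi$ with $\theta_s$ bounded below, also $n\le\const$; moreover $\weightG{u}=\tfrac{\mesh^2}{2}\sum_s\sin 2\theta_s\ge\const\cdot\mesh^2$. Combined with the remainder bound, this gives the claimed $\const\cdot\mesh\cdot\max_{W(u)}|D^3\phi|$. I expect no serious obstacle: the proof reduces to the two telescoping identities, and the only point worth flagging conceptually is that these identities are exactly what force the first and second moments of the random-walk step at $u$ to coincide with those of the Euclidean Laplacian, which is the reason $\tan\theta_s$ appears as the edge weight in the first place.
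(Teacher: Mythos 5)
Your proof is correct and follows essentially the same route as the paper's: your two telescoping identities are precisely the paper's observations that $\sum_s\tan\theta_s(u_s-u)=-i\sum_s(w_{s+1}-w_s)=0$ and $\sum_s\tan\theta_s(u_s^2-u^2)=-i\sum_s(w_{s+1}^2-w_s^2)=0$ rewritten in explicit angular coordinates, and your isotropic computation $\sum_s\tan\theta_s|u_s-u|^2=2\mesh^2\sum_s\sin2\theta_s=4\weightG{u}$ together with the Taylor-remainder bound for (ii) matches the paper verbatim. No gaps.
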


\begin{proof} We start by enumerating neighbors of $u$ as
$u_1,\dots,u_n$ and its neighbors on the dual lattice as $w_1,\dots,w_n$
-- see Fig.~\ref{Fig:Notations}B).
 Obviously, $\D^\mesh \phi^\mesh \equiv 0$, if $\phi$ is a constant. Since
\[
\sum_{u_s\sim u}\tan\theta_s \cdot (u_s\!-\!u) = -i\sum_{u_s\sim u}(w_{s+1}\!-\!w_s)=0,
\]
one obtains $\D^\mesh \phi^\mesh \equiv 0$ for linear functions $x=\Re u$ and $y=\Im u$.
Similarly,
\[
\sum_{u_s\sim u}\tan\theta_s \cdot (u_s^2\!-\!u^2) = -i\sum_{u_s\sim
u}(w_{s+1}\!-\!w_s)(u\!+\!u_s)=-i\sum_{u_s\sim u}(w_{s+1}^2\!-\!w_s^2)=0,
\]
so $\D^\mesh \phi^\mesh \equiv 0$ for $x^2\!-\!y^2=\Re u^2$ and $2xy=\Im u^2$. The result for
$x^2\!+\!y^2$ follows from
\[
\sum_{u_s\sim u}\tan\theta_s \cdot |u_s\!-\!u|^2 = 2\mesh^2\sum_{u_s\sim u} \sin 2\theta_s =
4\weightG{u},
\]
thus proving (i). Finally, Taylor formula implies (ii).
\end{proof}

\subsection{Green's function. Dirichlet problem. Harnack lemma. Lipschitzness}

\begin{definition} Let $u_0\in\G$. We call $H=G_\G(\ccdot;u_0):\G\to\R$ the {\bf free Green's
function} iff it satisfies the following:

\begin{quotation}
\noindent (i)\phantom{ii} $[\D^\mesh H](u)=0$ for all $u\ne u_0$ and $[\D^\mesh H](u_0)\cdot
\weightG{u_0} =1$;

\smallskip

\noindent (ii)\phantom{i} $H(u)=o(|u\!-\!u_0|)$ as $|u-u_0|\to\iy$;

\smallskip

\noindent (iii) $ H(u_0)= \frac{1}{2\pi}(\log\mesh \!-\! \g_{\mathrm{Euler}}\!-\!\log 2)$,
where $\g_{\mathrm{Euler}}$ is the Euler constant.
\end{quotation}
\end{definition}

\begin{remark} We use a nonstandard normalization at $u_0$ (usually the additive constant
is chosen so that $G(u_0;u_0)=0$) in order to have convergence to the standard continuous
Green's function $\frac{1}{2\pi}\log|u\!-\!u_0|$ as the mesh $\mesh$ goes to zero.
\end{remark}

\begin{theorem}[\bf Kenyon]
\label{ThmKenyonHarm} There exists a unique Green's function $G_\G(\ccdot;u_0)$. Moreover, it
satisfies
\begin{equation}
\label{Gasympt} G_\G(u;u_0)= \frac{1}{2\pi}\log |u\!-\!u_0| +
O\lt(\frac{\d^2}{|u\!-\!u_0|^2}\rt),\quad u\ne u_0,
\end{equation}
uniformly with respect to the shape of the isoradial graph $\G$ and $u_0\in\G$.
\end{theorem}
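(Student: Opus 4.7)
The plan is to prove existence by writing down Kenyon's explicit integral formula for the Green's function, verify the defining properties (i)--(iii) directly from this representation, extract the asymptotics via saddle-point analysis (sharpened by a symmetry argument to get the improved $O(\mesh^2/|u-u_0|^2)$ remainder), and finally settle uniqueness by a Liouville-type argument combined with the normalization at $u_0$.

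For existence, I would use the discrete exponential $e(u,u_0;\lambda)$, defined as a telescoping product over rhombus edges along any path from $u_0$ to $u$, and take
\[
G_\G(u;u_0) \;=\; \frac{1}{(2\pi)^2i}\oint_C e(u,u_0;\lambda)\,\log\lambda\,\frac{d\lambda}{\lambda},
\]
where $C$ is a contour separating the poles of $e(u,u_0;\cdot)$ coming from the ``outgoing'' rhombi directions from those of the ``incoming'' ones, and the branch of $\log\lambda$ is chosen consistently with (iii). Since $\lambda\mapsto e(\cdot,u_0;\lambda)$ is discrete harmonic for each fixed $\lambda$, property (i) away from $u_0$ is automatic; at $u_0$ itself the Laplacian picks up a residue that one checks equals $1/\weightG{u_0}$ after using $\sum_s (w_{s+1}-w_s)=0$ and the identity $2\weightG{u_0}=\mesh^2\sum_s\sin 2\theta_s$ from (\ref{WeightGDef}). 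Property (ii) and the additive normalization in (iii) are then read off from the leading behavior of the integral, which I treat next.

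For the asymptotics, I would perform a saddle-point analysis of the integral for $u$ far from $u_0$. Writing $u-u_0 = re^{i\psi}$ and parametrising factors of $e(u,u_0;\lambda)$ by the rhombus angles along a nearly straight path from $u_0$ to $u$, the integrand has a saddle point near $\lambda_\star=e^{-i\psi}/\mesh$ (after rescaling), and a standard stationary-phase computation produces the main term $\frac{1}{2\pi}\log|u-u_0|$ together with the constant $-(\g_{\mathrm{Euler}}+\log 2)/(2\pi)$ absorbed by (iii). The crucial point for the \emph{improved} error $O(\mesh^2/|u-u_0|^2)$ rather than the naive $O(\mesh/|u-u_0|)$ is the symmetry $e(u,u_0;\lambda)\cdot e(u,u_0;-\lambda)=1$ (equivalently, $\lambda\mapsto-\lambda$ anti-invariance of $\log e$ along straight paths), which forces the first-order correction at the saddle to vanish; the remaining contribution is even, giving the squared gain. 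This step is the main obstacle: controlling the discrete exponential along an \emph{arbitrary} path to the same endpoint and pushing the cancellations uniformly over all isoradial graphs satisfying \sps~requires carefully exploiting the rhombus geometry, and this is where the angle bound $\theta\in[\eta/2,\pi/2-\eta/2]$ enters in an essential way.

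Uniqueness follows from a Liouville argument. If $G_1,G_2$ both satisfy (i)--(iii), then $H:=G_1-G_2$ is discrete harmonic on all of $\G$ with $H(u)=o(|u-u_0|)$ at infinity. The asymptotics just established (applied to $G_1$ or $G_2$ alone, or a direct Harnack/gradient estimate for isoradial discrete harmonic functions) show that sublinearly growing discrete harmonic functions on $\G$ are constant; the coincidence of values at $u_0$ imposed by (iii) then forces $H\equiv 0$. Thus once existence and asymptotics are secured, uniqueness is essentially automatic.
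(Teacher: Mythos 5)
Your proposal follows essentially the same route as the paper's own proof (sketched in Appendix~A.1): Kenyon's contour-integral representation via discrete exponentials, harmonicity of the exponentials for property (i), Laplace-type asymptotics dominated by $\lambda\to 0$ and $\lambda\to\infty$ with the improved remainder coming from your symmetry $e(\lambda)\,e(-\lambda)=1$ (i.e.\ oddness of $\log e$ in $\lambda$, which is exactly the paper's vanishing of $\frac{d^2}{d\lambda^2}\log e(\lambda,u;u_0)$ at $0$ and $\infty$), and uniqueness by a Harnack--Liouville argument. The step you correctly single out as the main obstacle --- exponential smallness of the integrand for intermediate $\lambda$, uniformly over all isoradial graphs satisfying \sps --- is precisely where the paper's ``slight but necessary modification'' lives, namely the choice of a path on which each step (or each pair of steps across a rhombus) makes an angle less than $\pi/2$ with $u-u_0$.
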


\begin{proof}
This asymptotic form for isoradial graphs was first obtained in \cite{Ken02}. Some small
improvements (the correct additive constant and the order of the remainder) were done in
\cite{Buck08}. We give a sketch of Kenyon's beautiful proof in
Appendix~\ref{SectAKenyonFreeGreen}.
\end{proof}

Let $\O^\mesh_\G$ be some {\it bounded} connected discrete domain. It is well known that for
each $f:\pa\O^\mesh_\G\to\R$ there exists a unique discrete harmonic function $H$ in
$\O^\mesh_\G$ such that $H|_{\pa\O^\mesh_\G}=f$ (e.g., $H$ minimizes the corresponding
Dirichlet energy, see \cite{Duf68}). Clearly, $H$ depends on $f$ linearly, and so
\[
H(u)=\sum_{a\in\pa\O^\d_\G} \dhm{\mesh}{u}{\{a\}}{\O^\mesh_\G}\cdot f(a)
\]
for all $u\in \O^\mesh_\G$, where $\dhm{\mesh}{u}{\ccdot}{\O^\mesh_\G}$ is some probabilistic
measure on $\pa\O^\mesh_\G$ which is called {\bf harmonic measure} at $u$. It is harmonic as a
function of $u$ and has a standard interpretation as the exit probability for the random walk
on $\G$ (the measure of a set $E\subset\pa\O^\mesh_\G$ is the probability that the random walk
started from $u$ exits $\O^\mesh_\G$ through $E$).

\begin{definition}
For $u_0\in\Int\O^\mesh_\G$, we call $H=G_{\Omega^\mesh_\G}(\ccdot;u_0)$ the {\bf Green's
function in~$\bm{\O^\mesh_\G}$} iff

(i)\phantom{i} $[\D^\mesh H](u)=0$ for all interior vertices $u\in \Int\O^\mesh_\G$ except
$u_0$ and

$\phantom{(ii)}\ [\D^\mesh H](u_0)\cdot\weightG{u_0}=1$;

(ii) $H=0$ on the boundary $\pa\O^\mesh_\G$.
\end{definition}

\noindent Note that these properties determine $G_{\Omega^\mesh_\G}(\ccdot;u_0)$ uniquely.
Namely, $G_{\Omega^\mesh_\G}^{\ }= G_\G-G_{\Omega^\mesh_\G}^*$, where
\[
G_{\Omega^\mesh_\G}^*=G_{\Omega^\mesh_\G}^*(\ccdot;u_0
):=\sum_{a\in\pa\O^\mesh_\G}\dhm{\mesh}{\ccdot}{\{a\}}{\O^\mesh_\G}\cdot G_\G(a;u_0)
\]
is a unique solution of the discrete boundary value problem
\[
\D^\mesh G_{\Omega^\mesh_\G}^*=0\ \ \mathrm{in}\ \ \O^\mesh_\G,\qquad
G_{\Omega^\mesh_\G}^*=G_\G(\ccdot;u_0)\ \mathrm{on}\ \pa\O^\mesh_\G.
\]
Applying Green's formula (\ref{GreenFormula}) to $H=\dhm{\mesh}{\ccdot}{\{a\}}{\O^\mesh_\G}$
and $G=G_{\Omega^\mesh_\G}(\ccdot;u_0)$, one obtains
\begin{equation}
\label{oAsG} \dhm{\mesh}{u_0}{\{a\}}{\O^\mesh_\G} = -\tan\theta_{a_{\mathrm{int}}a} \cdot
G_{\Omega^\mesh_\G}(a_{\mathrm{int}};u_0),\quad \mathrm{where}\ \
a=(a\,;(a_{\mathrm{int}}a))\in\pa\O^\mesh_\G.
\end{equation}

It was noted by U.~B\"ucking \cite{Buck08} that, since the remainder in (\ref{Gasympt}) is of
order $O(\mesh^2|u\!-\!u_0|^{-2})$, one can directly use R.~Duffin's ideas \cite{Duf53} in
order to derive the Harnack Lemma for discrete harmonic functions.

Recall that $B^\mesh_\G(z,r)\ss\G$ denotes the discretization of an open disc $B(z,r)\ss\C$.

\begin{proposition}[\bf discrete Harnack Lemma]
\label{PropHarnack} Let $u_0\in\G$ and $H:B^\mesh_\G(u_0,R)\to\R$ be a nonnegative discrete
harmonic function.\\
(i)\phantom{i} If $u_1\sim u_0$, then
\[
|H(u_1)-H(u_0)|\le \const\cdot\frac{\mesh H(u_0)}{R}.
\]
(ii) If $u_1,u_2\in B^\mesh_\G(u_0,r)\ss\Int B^\mesh_\G(u_0,R)$, then
\[
\exp\lt[-\const\cdot\frac{r}{R-r}\rt] \le \frac{H(u_2)}{H(u_1)}\le
\exp\lt[\const\cdot\frac{r}{R-r}\rt].
\]
\end{proposition}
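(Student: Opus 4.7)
My plan is to deduce (ii) from (i) by a chaining argument and to prove (i) by reducing it to a sharp ratio bound for the discrete Poisson kernel of $B:=B^\mesh_\G(u_0,R)$, which in turn rests on Kenyon's asymptotic formula (\ref{Gasympt}).

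For part (ii): given $u_1,u_2\in B^\mesh_\G(u_0,r)$, I would connect them by a discrete path $u_1=v_0\sim v_1\sim\cdots\sim v_N=u_2$ staying in $B^\mesh_\G(u_0,r)$ of combinatorial length $N\le \const\cdot r/\mesh$; such a path exists because, by condition \sps, combinatorial and Euclidean distances in $\G$ are comparable. Since each $v_k\in B^\mesh_\G(u_0,r)$ satisfies $B^\mesh_\G(v_k,R-r)\ss B^\mesh_\G(u_0,R)$, the function $H$ is nonnegative and discrete harmonic on this smaller ball, so part (i) applied at $v_k$ with radius $R-r$ yields $H(v_{k+1})/H(v_k)\le 1+\const\cdot\mesh/(R-r)\le \exp[\const\cdot\mesh/(R-r)]$. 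Multiplying over all $N$ steps gives $H(u_2)/H(u_1)\le \exp[\const\cdot r/(R-r)]$, with the matching lower bound obtained by swapping $u_1$ and $u_2$.

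For part (i): I would start from the Poisson representation
\[
H(u)=\sum_{a\in\pa B}\dhm{\mesh}{u}{\{a\}}{B}\cdot H(a)
\]
and observe that since $H(a)\ge 0$, it suffices to establish the pointwise ratio bound
\[
\dhm{\mesh}{u_1}{\{a\}}{B}=(1+O(\mesh/R))\,\dhm{\mesh}{u_0}{\{a\}}{B}
\]
uniformly in $a\in\pa B$. Via (\ref{oAsG}) and the symmetry $G_B(u;v)=G_B(v;u)$ (self-adjointness of the weighted Laplacian), this Poisson kernel equals $-\tan\theta_{aa_{\mathrm{int}}}G_B(u;a_{\mathrm{int}})$, and the $\tan\theta$ factor is bounded above and below by \sps, so the task reduces to controlling the ratio $G_B(u_1;v)/G_B(u_0;v)$ for $v=a_{\mathrm{int}}$ close to $\pa B$. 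I would analyze this via $G_B=G_\G-G_B^*$ from (\ref{G*Def}) together with Kenyon's asymptotics (\ref{Gasympt}).

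The main obstacle will be extracting a genuine ratio bound rather than merely an absolute one. A direct application of (\ref{Gasympt}) only gives $|G_\G(u_1;w)-G_\G(u_0;w)|=O(\mesh/R)$ at points $w$ with $|w-u_0|\asymp R$, which is of the same order as $|G_B(u_0;v)|\asymp\mesh/R$ and therefore too weak. Following Duffin's ideas \cite{Duf53}, the resolution is to exploit the sharper $O(\mesh^2/|u-u_0|^2)$ remainder in (\ref{Gasympt}) and compare $G_B$ to the continuous Green's function on the disc $B(u_0,R)$; for the latter, an explicit computation shows that the first-order $O(\mesh/R)$ terms in $u$ cancel between the singular part $\frac{1}{2\pi}\log|u-v|$ and its harmonic reflection, leaving only an $O((\mesh/R)^2)$ variation. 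The improved remainder in (\ref{Gasympt}) then transports this continuous cancellation to $G_B$ with an error of order $O((\mesh/R)^2)$; combined with the lower bound $|G_B(u_0;v)|\ge\const\cdot\mesh/R$ (which follows along the same lines, or from the analogous lower bound on $\omega^\mesh(u_0;\{a\};B)$), this yields the desired ratio $O(\mesh/R)$ and completes (i).
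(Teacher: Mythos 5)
Your part (ii) is exactly the paper's argument (chain part (i) along a path of $O(r/\mesh)$ steps inside $B^\mesh_\G(u_0,r)$, applying (i) at each step with radius $R-r$), so nothing to add there. For part (i), however, you take a genuinely different and more ambitious route than the paper. The paper never proves a ratio bound on the Poisson kernel: it first establishes a discrete mean value property, using the auxiliary function $F(u)=G_\G(u;u_0)-\frac{\log R}{2\pi}+\frac{R^2-|u-u_0|^2}{4\pi R^2}$, which satisfies $\D^\mesh F\equiv -(\pi R^2)^{-1}$ away from $u_0$ by Lemma~\ref{ApproxLemma}(i) and is $O(\mesh^2/R^2)$ near $\pa B^\mesh_\G(u_0,R)$ thanks to the improved remainder in (\ref{Gasympt}); Green's formula then gives $\bigl|H(u_0)-\frac{1}{\pi R^2}\sum_u H(u)\weightG{u}\bigr|\le\const\cdot\mesh H(u_0)/R$, and (i) follows by comparing the mean values over $B^\mesh_\G(u_0,R)$ and $B^\mesh_\G(u_1,R-2\mesh)$, the thin-annulus contribution being controlled by the two-sided bound $\dhm{\mesh}{u_0}{\{a\}}{B^\mesh_\G(u_0,R)}\asymp\mesh/R$ of Proposition~\ref{HmInDisc}. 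The only global input there is that two-sided bound, never an estimate on \emph{increments} of harmonic measure.

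Your route requires the uniform ratio $\dhm{\mesh}{u_1}{\{a\}}{B}\,/\,\dhm{\mesh}{u_0}{\{a\}}{B}=1+O(\mesh/R)$, a strictly stronger statement, and the ``transport'' step is exactly where the proof is missing. As literally set up --- decomposing $G_B(\cdot\,;v)=G_\G(\cdot\,;v)-G_B^*(\cdot\,;v)$ via (\ref{G*Def}) with the pole at $v=a_{\mathrm{int}}$ --- it does not go through: the boundary data $G_\G(a;v)$ of $G_B^*$ is evaluated at boundary vertices $a$ with $|a-v|$ as small as $\mesh$, where the remainder in (\ref{Gasympt}) is only $O(1)$; summing against $\dhm{\mesh}{u_0}{\{a\}}{B}\asymp\mesh/R$ leaves an error of size $O(\mesh/R)$ in $G_B^*(u_0;v)$, i.e.\ of the same order as $G_B(u_0;v)$ itself, and bounding the $u$-increment of that error requires precisely the increment bound on $\dhm{\mesh}{\cdot}{\{a\}}{B}$ you are trying to prove --- a circularity. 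The repair is to keep the poles at the centre, as (\ref{oAsG}) already provides: study $v\mapsto G_B(v;u_1)-G_B(v;u_0)$ at $v=a_{\mathrm{int}}$. Then every boundary datum $G_\G(a;u_i)$ is evaluated at $|a-u_i|\asymp R$, where (\ref{Gasympt}) gives $G_\G(a;u_1)-G_\G(a;u_0)=-\frac{1}{2\pi}\Re\frac{u_1-u_0}{a-u_0}+O(\mesh^2/R^2)$; the continuous harmonic reflection of this dipole, $-\frac{1}{2\pi R^2}\Re\bigl[(u_1-u_0)\overline{(\,\cdot-u_0)}\bigr]$, is real-linear and hence \emph{exactly} discrete harmonic by Lemma~\ref{ApproxLemma}(i), so the maximum principle upgrades the $O(\mesh^2/R^2)$ boundary agreement to all of $B$ and the cancellation you computed in the continuum survives. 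With that modification your argument closes; as written, the key estimate is asserted rather than proved.
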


\begin{remark}
In Sect.~\ref{SectBoundaryNorm} we also give a version of the boundary Harnack principle which
compares the values of a positive harmonic function in the bulk with its normal derivative on
a ``straight'' part of the boundary (see Proposition~\ref{PropBoundHar}).
\end{remark}

\begin{proof}
In order to make our presentation complete, we recall briefly the arguments from \cite{Duf53}
and \cite{Buck08} in Appendix~\ref{SectAHarnack}.
\end{proof}

\begin{corollary}[\bf Lipschitzness of discrete harmonic functions]
\label{CorLipHarm} Let $H$ be discrete harmonic in $B^\mesh_\G(u_0,R)$ and $u_1,u_2\in
B^\mesh_\G(u_0,r)\ss\Int B^\mesh_\G(u_0,R)$. Then
\[
|H(u_2)-H(u_1)|\le \const\cdot\frac{M|u_2\!-\!u_1|}{R-r},\quad \mathit{where}\quad
M=\max_{B^\mesh_\G(u_0,R)}|H(u)|.
\]
\end{corollary}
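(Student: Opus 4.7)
The plan is to derive this corollary directly from part (i) of the Harnack Lemma, using assumption \sps~to connect $u_1$ and $u_2$ by a sufficiently short path and then summing the one-step gradient bounds along it.

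First I would reduce to the nonnegative case: the function $\wt H := H + M$ is discrete harmonic and nonnegative in $B^\mesh_\G(u_0,R)$, and clearly $\wt H(v) \le 2M$ for every $v \in B^\mesh_\G(u_0,R)$. Moreover, $\wt H(u_2) - \wt H(u_1) = H(u_2) - H(u_1)$, so it suffices to bound the increments of $\wt H$.

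Next, I would apply Harnack (i) in \emph{moving} balls. For any vertex $v$ lying in the intermediate ball $B^\mesh_\G(u_0, (R+r)/2)$, the disc $B^\mesh_\G(v, (R-r)/2)$ is contained in $B^\mesh_\G(u_0,R)$, so part (i) of Proposition \ref{PropHarnack} yields
\[
|\wt H(v') - \wt H(v)| \le \const \cdot \frac{\mesh \, \wt H(v)}{(R-r)/2} \le \const \cdot \frac{\mesh M}{R-r}
\]
for every neighbor $v' \sim v$. This is the key one-step Lipschitz estimate.

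Finally, I would chain this along a path. If $|u_2-u_1| \ge (R-r)/2$, the trivial bound $|H(u_2) - H(u_1)| \le 2M$ already gives the claim. Otherwise, by assumption \sps~the combinatorial distance $d_\G(u_1,u_2)$ is comparable to $|u_2-u_1|/\mesh$, so I can choose a path $u_1 = v_0 \sim v_1 \sim \dots \sim v_N = u_2$ in $\G$ of length $N \le \const \cdot |u_2-u_1|/\mesh$ that stays inside $B^\mesh_\G(u_0, r + |u_2-u_1|) \subset B^\mesh_\G(u_0, (R+r)/2)$. Summing the one-step bound along this path gives
\[
|H(u_2) - H(u_1)| \le \sum_{k=1}^N |\wt H(v_k) - \wt H(v_{k-1})| \le N \cdot \const \cdot \frac{\mesh M}{R-r} \le \const \cdot \frac{M\,|u_2-u_1|}{R-r},
\]
which is the desired estimate. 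The only nonobvious step is the construction of the short path staying inside an intermediate ball; this is precisely where the rhombi-angle condition \sps~enters, as it guarantees that Euclidean and combinatorial distances in $\G$ are comparable uniformly in the graph structure.
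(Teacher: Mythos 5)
Your proof is correct and follows essentially the same route as the paper: shift by $M$ to get a nonnegative harmonic function, apply Harnack (i) in balls of radius comparable to $R-r$ centered at successive path vertices, and sum along a path of combinatorial length $\le \const\cdot|u_2\!-\!u_1|/\mesh$ supplied by condition \sps. Your extra care about keeping the path inside an intermediate ball (via the case split on $|u_2\!-\!u_1|$ versus $R-r$) is a small refinement of a point the paper glosses over, not a different argument.
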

\begin{proof}
By assumption \sps~we can find a path $u_1=v_0v_1v_2...v_{k-1}v_k=u_2$,  connecting $u_1$ and
$u_2$ inside $B^\mesh_\G(u_0,r)$, such that $k\le \const\cdot \mesh^{-1}|u_2\!-\!u_1|$. Since
$0\le H\!+\!M\le 2M$, applying Harnack's inequality to $H+M$, one gets
\[
|H(u_2)-H(u_1)|\le \sum_{j=0}^{k-1}|H(v_{j+1})-H(v_j)| \le
\const\cdot\frac{|u_2\!-\!u_1|}{\mesh}\cdot\frac{\mesh M}{R-r}. \qedhere
\]
\end{proof}

\subsection{Weak Beurling-type estimates}

The following simple fact is based on the approximation property (Lemma~\ref{ApproxLemma}) for
the discrete Laplacian on isoradial graphs.
\begin{lemma}
\label{DiscExit} Let $u_0\in\G$, $r\!>\!0$ and $B^\mesh_\G(u_0,r)$ be the discretization of a
disc $B(u_0,r)$ (see Fig. \ref{Fig:DiscrDisc}A). Let $a,b\in\pa B^\mesh_\G(u_0,r)$ be two
boundary vertices such that
\[
\textstyle \arg (b\!-\!u_0)-\arg (a\!-\!u_0) \ge \frac{1}{4}\pi.
\]
Then,
\[
\textstyle \dhm{\mesh}{u}{ab}{B^\mesh_\G(0,r)}\ge \const > 0\quad \mathit{for\ all}\ u\in
B^\mesh_\G(u_0,\frac{1}{2}r),
\]
where $a b$ denotes the discrete counter clockwise arc from $a$ to $b$.
\end{lemma}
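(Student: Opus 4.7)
I would compare the discrete harmonic function $H(u):=\dhm{\mesh}{u}{ab}{B^\mesh_\G(u_0,r)}$ against a continuous harmonic barrier, invoking the discrete maximum principle and the approximation property (Lemma~\ref{ApproxLemma}). First, dispose of the short-scale case $r\le C(\eta)\mesh$: the discrete disc $B^\mesh_\G(u_0,r)$ then contains only $O_\eta(1)$ vertices by \sps, and the uniform lower bound on random-walk transition probabilities yields $H(u)\ge c(\eta)>0$ by a direct combinatorial reachability argument (the walk from any $u\in B^\mesh_\G(u_0,r/2)$ reaches any designated nonempty boundary arc in a bounded number of steps with probability bounded below). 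One may then assume $r\gg\mesh$.

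In the main case $r\gg\mesh$, let $\omega(z):=\omega(z;ab;B(u_0,r))$ be the continuous harmonic measure of the arc; M\"obius invariance and the angular hypothesis $\arg(b-u_0)-\arg(a-u_0)\ge\pi/4$ give an absolute constant $c_0>0$ with $\omega\ge c_0$ on $\overline{B(u_0,r/2)}$. Since $\omega$ is singular at $a,b$, which would spoil any direct application of Lemma~\ref{ApproxLemma}, I would substitute the smoothed barrier $\tilde\omega(z):=\omega(z;\tilde a\tilde b;B(u_0,r+K\mesh))$, where $\tilde a\tilde b\subset ab$ is shrunk inward at each end by a carefully chosen amount and $K$ is a large constant. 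The shrinkage must be (i) small enough that the lower bound $c_0/2$ survives on $\overline{B(u_0,r/2)}$ (quantitative continuity of harmonic measure under boundary perturbation), and (ii) large enough that on the discrete boundary $\pa B^\mesh_\G(u_0,r)$ one has $\tilde\omega\le\mathbf{1}_{ab}+o(1)$ together with controlled $C^3$-norm for $\tilde\omega$ in an $O(\mesh)$-neighborhood of $\pa B^\mesh_\G(u_0,r)$. A Poisson-kernel estimate suggests that a shrinkage of order $\sqrt{\mesh r}$ in arc length (equivalently, angular $\sqrt{\mesh/r}$) meets all these constraints.

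With this choice, Lemma~\ref{ApproxLemma} gives $|[\D^\mesh\tilde\omega](u)|=o(1)$ with error terms concentrated near $\tilde a,\tilde b$, which by Green's formula and the logarithmic asymptotics of the free Green's function from Theorem~\ref{ThmKenyonHarm} integrate to an $o(1)$ correction relating $\tilde\omega|_\G$ and $H$. The discrete maximum principle then yields $H(u)\ge\tilde\omega(u)-o(1)\ge c_0/4$ on $B^\mesh_\G(u_0,r/2)$, completing the main case. The central obstacle is exactly the singular behaviour of $\omega$ at the arc endpoints $a,b$: the shrinkage scale must be tuned to balance three competing error sources --- the Poisson-kernel contribution of $\tilde\omega$ at boundary vertices near $a,b$ on the complementary side, the pointwise size of $\|D^3\tilde\omega\|_\infty$ near the discrete boundary, and the preservation of the continuous lower bound $c_0/2$ --- against the logarithmic decay encoded by the free Green's function. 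Closing up these error budgets is the technical crux of the argument.
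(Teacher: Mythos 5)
Your proposal diverges from the paper's argument in a way that creates, rather than avoids, the main difficulty, and the step you yourself identify as ``the technical crux'' is left open. You take the exact continuous harmonic measure as a barrier, and since it is singular at the arc endpoints you must smooth it (shrink the arc by $\sqrt{\mesh r}$, enlarge the disc by $K\mesh$) and then control the accumulated discretization error $[\D^\mesh\wt\omega](u)=O(\mesh\,\|D^3\wt\omega\|)$ by integrating it against the Green's function of the discrete disc. That error analysis is never carried out: near the smoothed endpoints one has $|D^3\wt\omega|\sim d^{-3}$ at distance $d$, so individual vertices at distance $O(\mesh)$ contribute $O(1)$ to $\weightG{u}\cdot[\D^\mesh\wt\omega](u)$, and whether the sum weighted by $G_{B^\mesh_\G}$ is $o(1)$ requires the boundary decay $G_{B^\mesh_\G}(v;u)=O(\dist(v,\pa)/r)$ for the discrete Green's function --- an estimate not available off the shelf at this point of the paper (it is essentially Proposition~\ref{HmInDisc}, proved later via Theorem~\ref{ThmKenyonHarm}). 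As written, the proposal is a plan with an unverified error budget at exactly the point where the proof could fail, so it does not constitute a proof.

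The paper's device makes all of this unnecessary: instead of the harmonic measure itself, it takes a fixed smooth function $\phi_0$ on the \emph{larger} disc $B(0,\frac32)$ that is $\le 1$ near the arc, $\le 0$ near the complementary arc, bounded below on $B(0,\frac23)$, and \emph{strictly} subharmonic, $[\D\phi_0]\ge\const>0$. Rescaled to $B(u_0,r)$, its restriction to $\G$ has uniformly bounded third derivatives (no endpoint singularity, since $\phi_0$ need only be a sub-barrier, not the exact harmonic measure), and by Lemma~\ref{ApproxLemma} the $O(\mesh/r)$ discretization error is absorbed by the strict positivity of $\D\phi_0$ once $\mesh/r$ is small. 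Hence $\phi^\mesh$ is genuinely discrete subharmonic and the maximum principle gives $\dhm{\mesh}{u}{ab}{B^\mesh_\G(u_0,r)}\ge\phi^\mesh(u)\ge\const>0$ directly, with no Green's function bookkeeping. Your treatment of the case $r\lesssim\mesh$ by bounded-step reachability matches the paper's. If you want to salvage your route, the lesson is to give up harmonicity of the barrier in exchange for smoothness up to the boundary and a strictly signed Laplacian; that single modification collapses the three competing error sources you list into none.
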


\begin{proof}
Fix some small $\rho>0$ and a smooth function $\phi_0:B(0,1\!+\!\rho)\to\R$ such that

(ia) $\phi_0(z)\le 1$ for all $z=re^{i\phi}$, $r\in(1\!-\!\rho,1\!+\!\rho)$,
$\phi\in[0,\frac{1}{4}\pi]$;

(ib) $\phi_0(z)\le 0$ for all $z=re^{i\phi}$, $r\in(1\!-\!\rho,1\!+\!\rho)$,
$\phi\in[\frac{1}{4}\pi,2\pi]$;

(ii)\phantom{i} $\phi_0$ is subharmonic, moreover $[\D\phi_0](\z)\ge \const
>0$ everywhere in $B(0,1\!+\!\rho)$;

(iii) $\phi_0(z)\ge \const >0$ for all $z\in B(0,\frac{1}{2}\!+\!\rho)$.


\noindent For instance, one can take $\phi_0(z):=h(z)-c+d|z|^2$, where $h$ is the (continuous)
harmonic measure of the arc
$\{\z:|\z|=1\!+\!\rho:\arg\z\in[\frac{1}{12}\pi,\frac{1}{6}\pi]\}$; $c>0$ is chosen so that
(ib) and (iii) are fulfilled ($c$ exists, if $\rho$ is small enough); and $d>0$ is
sufficiently small.

Let
\[
\phi^\mesh(u):=\phi_0\lt(\frac{u-u_0}{a-u_0}\rt)\quad\mathrm{for}\quad u\in B^\mesh_\G(u_0,r).
\]
Then, $\phi^\mesh\le 1$ on the discrete arc $a b$ and $\phi^\mesh\le 0$ on the complementary
arc $b a$.

If $\mesh/r$ is small enough, then, due to (ii) and Lemma~\ref{ApproxLemma} (approximation
property), $\phi^\mesh$ is discrete subharmonic in $B^\mesh_\G(u_0,r)$. Using the maximum
principle, one obtains
\[
\textstyle \dhm{\mesh}{u}{a b}{B^\mesh_\G(0,r)}\ge \phi^\mesh(u)\ge\const>0\quad \mathrm{for\
all}\ u\in B^\mesh_\G(0,\frac{1}{2}r).
\]
If $\mesh/r\ge \const>0$, then the claim is trivial, since the random walk starting at $u_0$
can reach the discrete arc $a b$ in a uniformly bounded number of steps.
\end{proof}

Let $\O^\mesh_\G$ be some connected discrete domain, $u\in\O^\mesh_\G$ and
$E\ss\pa\O^\mesh_\G$. We set
\[
\dist_{\O^\mesh_\G}(u;E) := \inf\{R:\ u\ \mathrm{and}\ E\ \mathrm{are\ connected\ in}\
\O^\mesh_\G\cap B(u,R)\}.
\]
The following Proposition is a simple discrete version of the classical Beurling estimate with
a (sharp) exponent $1/2$ replaced by some (small) positive $\beta$.
\begin{proposition}[\bf weak Beurling-type estimates]
\label{PropWeakBeurling} There exists an absolute constant $\b>0$ such that for any simply
connected discrete domain $\O^\mesh_\G$, interior vertex $u\in\Int\O^\mesh_\G$ and some part
of the boundary $E\subset\pa\O^\mesh_\G$ one has
\[
\dhm{\mesh}{u}{E}{\O^\mesh_\G}\le \const\cdot
\lt[\frac{\dist(u;\pa\O^\mesh_\G)}{\dist_{\O^\mesh_\G}(u;E)}\rt]^\b \quad \mathit{and} \quad
\dhm{\mesh}{u}{E}{\O^\mesh_\G}\le \const\cdot \lt[\frac{\diam
E}{\dist_{\O^\mesh_\G}(u;E)}\rt]^\b.
\]
Above we set $\diam E:=\mesh$, if $E$ consists of a single vertex.
\end{proposition}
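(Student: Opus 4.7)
The plan is to establish both estimates via a dyadic iteration over geometric scales, reducing each to a uniform one-scale estimate on the probability of being absorbed on $\pa\O^\mesh_\G$. Write $r_0=\dist(u,\pa\O^\mesh_\G)$, $R=\dist_{\O^\mesh_\G}(u;E)$, and $\rho_k=2^k r_0$; one may assume $R\ge C r_0$ for a sufficiently large absolute constant $C$, as otherwise the bound is trivial. Let $N$ be the largest integer with $\rho_{N+2}\le R$. The key claim I will prove is that there is an absolute constant $q\in(0,1)$ such that, for every $k=0,\dots,N$ and every vertex $w$ of $\O^\mesh_\G$ with $|w-u|\le\rho_k$,
\[
\bP_w\bigl[\,\mathrm{RW\ exits\ } B(u,\rho_{k+2})\cap\O^\mesh_\G \mathrm{\ through\ } \pa B(u,\rho_{k+2})\,\bigr]\ \le\ q.
\]
Granting this, the strong Markov property and induction give that the walk from $u$ reaches $\pa B(u,\rho_N)$ before $\pa\O^\mesh_\G$ with probability at most $q^{\lfloor N/2\rfloor}$; since $E$ cannot be reached within $\O^\mesh_\G\cap B(u,\rho_N)$ (by definition of $R$), this yields the first inequality with $\b=|\log q|/(2\log 2)$.

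To establish the one-scale claim, the essential input is a topological lemma: simple connectivity of $\O^\mesh_\G$ together with the hypothesis that $u$ and $E$ lie in distinct connected components of $\O^\mesh_\G\cap B(u,\rho_{k+2})$ produces a connected \emph{barrier} $\gamma\ss\C\sm\O^\mesh$ crossing the annulus $B(u,\rho_{k+2})\sm B(u,r_0)$ and separating $u$ from $E$ inside $B(u,\rho_{k+2})$. Pick a point $w^*\in\gamma$ at distance $\asymp\rho_{k+1}$ from $w$, and apply an adaptation of Lemma \ref{DiscExit} in the disc $B(w^*,c\rho_{k+1})$: the barrier, reaching from depth $r_0\le\rho_{k-1}$ near $u$ out past radius $\rho_{k+2}$, subtends an arc of angular measure at least $\pi/4$ on the circle $\pa B(w^*,c\rho_{k+1})$ for $c$ chosen small enough. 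This gives a uniform positive lower bound on the probability of hitting $\pa\O^\mesh_\G$ from any starting point in $B(w^*,\tfrac12 c\rho_{k+1})$, and Proposition \ref{PropHarnack} propagates the estimate to an arbitrary $w\in B(u,\rho_k)$.

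The second inequality proceeds by the same dyadic iteration, now carried out over balls $B_k=B(e,2^k d)$ around an arbitrary fixed $e\in E$, with $d=\diam E$. The required one-scale estimate in this setting is purely a planar random-walk fact (no topology of $\O^\mesh_\G$ enters): a walker at distance $\asymp 2^k d$ from $e$ reaches $\pa B_{k-1}$ before $\pa B_{k+1}\cup\pa\O^\mesh_\G$ with probability at most $q'<1$, via harmonic measure in an annulus and Proposition \ref{PropHarnack}. Iterating over $\asymp\log_2(L/d)$ scales, where $L=\dist_{\O^\mesh_\G}(u;E)$, yields $\const(d/L)^\b$. The main technical obstacle is the geometric/topological step in the first inequality: turning the abstract existence of a separating arc of $\C\sm\O^\mesh$ into a definite $\pi/4$-arc on a concrete circle to which Lemma \ref{DiscExit} (or a minor variant using a tailored subharmonic majorant) applies uniformly. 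Care is also needed with Carath\'eodory-style prime-end structure near $u$ and with ensuring, via condition \sps, that the barrier is populated by enough vertices of $\G$ to actually intercept the discrete walk.
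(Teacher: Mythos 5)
Your overall skeleton --- dyadic annuli plus a uniform one-scale absorption estimate obtained from Lemma~\ref{DiscExit} and the fact that $\C\sm\O^\mesh$ is connected and unbounded --- is the paper's, but both of your one-scale arguments have genuine gaps. For the first estimate, the step ``Proposition~\ref{PropHarnack} propagates the estimate to an arbitrary $w\in B(u,\rho_k)$'' does not work: the hitting probability $x\mapsto \bP_x[\,\mathrm{hit}\ \pa\O^\mesh_\G\ \mathrm{before}\ \pa B(u,\rho_{k+2})\,]$ is discrete harmonic only on $\O^\mesh_\G\cap B(u,\rho_{k+2})$ and fails to be harmonic at vertices adjacent to $\pa\O^\mesh_\G$, so the Harnack Lemma (stated for full discs $B^\mesh_\G(\cdot,R)$ of $\G$) does not apply to it; moreover a Harnack chain running inside $\O^\mesh_\G$ from $w$ to the neighbourhood of $w^*$ may be arbitrarily long, since the domain can be pinched between the two points, so no uniform constant survives. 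There is also a side-selection problem in your use of Lemma~\ref{DiscExit}: the lemma forces the walk to exit $B^\mesh_\G(w^*,c\rho_{k+1})$ through a prescribed arc, which entails a crossing of the barrier only for starting points lying on the opposite side of it. The paper's one-scale argument avoids both issues: it applies Lemma~\ref{DiscExit} to the \emph{unrestricted} walk on $\G$, chaining a bounded number of discs along the middle circle of the annulus $B(u,2r')\sm B(u,r')$ to show that with probability bounded below the walk makes a full turn inside the annulus before crossing it; a full turn meets \emph{every} crossing of the annulus by $\C\sm\O^\mesh$, whichever side the walk started on, so the probability of an annulus crossing avoiding $\pa\O^\mesh_\G$ is at most $p<1$, and no propagation inside $\O^\mesh_\G$ is needed.

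The gap in the second estimate is more serious. The one-scale bound you state (reach $\pa B_{k-1}$ before $\pa B_{k+1}\cup\pa\O^\mesh_\G$ with probability $\le q'<1$) is true but does not iterate: the walk is free to leave $B_{k+1}$ and return later, so the event ``hit $E$ before $\pa\O^\mesh_\G$'' is not contained in the intersection of your one-scale events, and the product bound fails. Indeed, if, as you claim, no topology of $\O^\mesh_\G$ enters, then nothing rules out the recurrent planar walk reaching $B(e,d)$ with probability $1$. What is actually needed is $\bP_y[\,\mathrm{reach}\ \ol{B_{k-1}}\ \mathrm{before}\ \pa\O^\mesh_\G\,]\le q<1$ for $y$ near $\pa B_k$, with no side condition, and this requires a positive absorption probability in each annulus $B(e,2\rho)\sm B(e,\rho)$. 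That absorption is supplied by exactly the same topological barrier as in the first estimate: $e\in\pa\O^\mesh_\G$ belongs to the connected unbounded set $\C\sm\O^\mesh$, which therefore crosses every annulus centred at $e$, and the full-turn argument applies verbatim. This is what the paper means by ``the second is similar.''
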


{
\begin{figure}
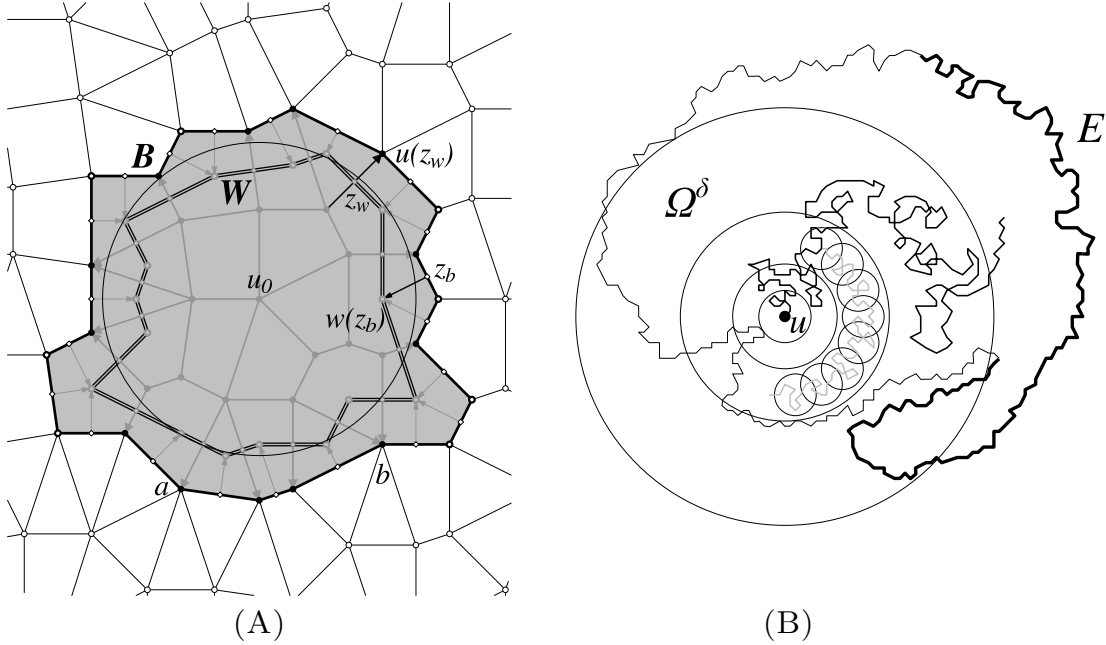

\centering{
\begin{minipage}[c]{0.44\textwidth}
\centering{\includegraphics[height=0.35\textheight]{DiscrDisc.ps}}
\centering{\textsc{(A)}}
\end{minipage}
\hskip -0.05\textwidth
\begin{minipage}[c]{0.55\textwidth}

\vspace{0.025\textheight}

\centering{\includegraphics[height=0.3\textheight]{BeurlingNew.eps}}

\vspace{0.025\textheight}

\centering{\textsc{(B)}}
\end{minipage}
} \caption{\label{Fig:DiscrDisc}\textsc{(A)} A discrete disc. The ``black''\ polygonal
boundary $B$ and the ``white''\ contour $W$ are shown together with the correspondences
$z\mapsto u(z)$, $z\in W_\DS$, and $z\mapsto w(z)$, $z\in B_\DS$.
\label{Fig:Beurling}\textsc{(B)} The proof of the weak Beurling-type estimate (Proposition
\ref{PropWeakBeurling}). The probability that the random walk makes a whole turn inside the
annulus (and so hits the boundary $\pa\O^\mesh$) is uniformly bounded from $0$ due to Lemma
\ref{DiscExit}.}
\end{figure}

}
\begin{proof}
The proof is quite standard. Let $d=\dist(u;\pa\O^\mesh_\G)$ and $r=\dist_{\O^\mesh_\G}(u;E)$.
Recall that $\dhm\mesh{u}{E}{\O^\mesh_\G}$ is equal to the probability that the random walk
starting at $u$ first hits the boundary of $\O^\mesh_\G$ \mbox{inside $E$}. Using
Lemma~\ref{DiscExit} (see Fig.~\ref{Fig:Beurling}B), it is easy to show that for each $d\le
r'\le\frac{1}{2}r$ the probability to cross the annulus $B(u,2r')\setminus B(u,r')$ inside
$\O^\mesh_\G$ without touching its boundary is bounded above by some absolute constant $p<1$
that does not depend on $r'$ and the shape of $\O^\mesh_\G$. Hence,
\[
\dhm\mesh{u}{E}{\O^\mesh_\G} \le p^{\log_2(r/d)-1} = p^{-1}\cdot (d/r)^{-\log_2 p},
\]
so the first estimate holds true with the exponent $\b=-\log_2 p >0$.

To prove the second estimate, let us fix any vertex $e\in E$. By definition of
$d=\dist_{\O^\mesh_\G}(u;E)$, it's clear that $E$ and $u_0$ are disconnected in
$\O^\mesh_\G\cap B(e,\frac{1}{2}d)$ (otherwise $u_0$ and $E$ would be for sure connected in
$\O^\mesh_\G\cap B(u_0,d)$). Now one can mimic the arguments given above for annuli
$B(e,2r')\setminus B(e,r')$ with $\diam E\le r'\le\frac{1}{4}d$.
\end{proof}

\subsection{Discrete holomorphic functions. Definitions.}

\label{SectDiscrHol}

Above we discussed the theory of discrete harmonic functions defined on the isoradial graph
$\G$ (or, in a similar manner, on its dual $\G^*$). Now, following \cite{Duf53}, \cite{Mer01}
and \cite{Ken02}, we introduce the notion of {\it discrete holomorphic} functions. These are
defined either on vertices $\L=\G\cup\G^*$ of the rhombic lattice, or on the set $\DS=\L^*$ of
the rhombi centers. Note that, in contrast to similar $\G$ and $\G^*$, $\L$ and $\DS$ have
essentially different combinatorial properties, so we obtain two essentially different
definitions. As it will be shown in Sect.~\ref{SectElemFactsHol}, the first class (holomorphic
functions defined on $\L$) can be thought as couples of harmonic functions and their harmonic
conjugates, while the second (holomorphic functions defined on $\DS$) consists of gradients of
harmonic functions. We are mostly interested in the second class, but start with some
preliminaries concerning functions defined on $\L$.

\begin{definition}
\label{DefHolL} Let $z\in\DS$ be a center of the rhombus $u^{-}w^{-}u^{+}w^{+}$, where
$u^{\pm}\in\G$ and $w^{\pm}\in\G^*$ are listed in counter clockwise order. Let a function $H$
be defined on some part of $\L$ including $u^{\pm}$, $w^{\pm}$.  We define its discrete
derivatives $\dpa H$, $\dopa H$ at $z$ as
\[
[\dpa H](z):= \frac{1}{2}\lt[ \frac{H(u^{+})\!-\!H(u^{-})}{{u}^{+}\!-\!{u}^{-}} +
\frac{H(w^{+})\!-\!H(w^{-})}{{w}^{+}\!-\!{w}^{-}} \rt],
\]
\[
[\dopa H](z):=\frac{1}{2}\lt[ \frac{H(u^{+})\!-\!H(u^{-})}{\ol{u^{+}\!-\!u^{-}}} +
\frac{H(w^{+})\!-\!H(w^{-})}{\ol{w^{+}\!-\!w^{-}}} \rt].
\]
We use the same notations, if $H$ is defined on $\G$ (or $\G^*$) only, formally setting
$H|_{\G^*}:=0$ (or $H|_\G:=0$, respectively). We call $H$ {\bf discrete holomorphic at
$\bm{z}$} iff $[\dopa H](z)=0$, which is equivalent to say that
\begin{equation}
\label{DefHolEquiv} 2[\dpa (H|_\G)](z)=\frac{H(u^{+})\!-\!H(u^{-})}{{u}^{+}\!-\!{u}^{-}} =
\frac{H(w^{+})\!-\!H(w^{-})}{{w}^{+}\!-\!{w}^{-}}=2[\dpa (H|_{\G^*})](z).
\end{equation}
\end{definition}

These difference operators naturally discretize the standard differential operators $\pa h=
\frac{1}{2}(h'_x -ih'_y)$ and $\ol\pa h = \frac{1}{2}(h'_x+ih'_y)$. In particular, $\dpa$ and
$\dopa$ have approximation properties similar to those in Lemma~\ref{ApproxLemma}. Namely,
\[
\left|[\dpa \phi|_\L](z)-(\pa \phi)(z)\right|\ ,\ \left|[\dopa \phi|_\L
](z)-(\ol{\pa}\phi)(z)\right|\ =\ O(\mesh^2)
\]
for smooth functions $\phi$.

Further, for $z\in\DS$, let $\theta_z$ denote the half-angle of the corresponding rhombus
$u^{-}w^{-}u^{+}w^{+}$ along the diagonal $u^{-}u^{+}$, so that
\[
w^{+}\!-\!w^{-}=i\tan\theta_z\cdot (u^{+}\!-\!u^{-}).
\]
We define the {\it weight} of $z$ by
\[
\weightDS{z} :=
\Area(u^{-}w^{-}u^{+}w^{+})=\mesh^2\sin2\theta_z.
\]
Also, for $v\in\G$ and, in the same way, for $v\in\G^*$, we set (cf. (\ref{WeightGDef}))
\[
\weightL{v}:=\frac{1}{4}\sum_{z_s\sim v}\weightDS{z_s}=\frac{\weightG{v}}{2}.
\]
Clearly, formulas similar to (\ref{DiscrIntApprox}) are fulfilled for $\phi$'s defined on
subsets of $\DS$ or $\L$. It is easy to check that definition \ref{DefHolL} may be rewritten
in the following form:
\[
[\dpa H](z)= \frac{1}{4\weightDS{z}}\sum_{v=u^{\pm}\!,\,w^{\pm}}
\ol{\weightE{z}{v}}H(v),\qquad [\dopa H](z)=
\frac{1}{4\weightDS{z}}\sum_{v=u^{\pm}\!,\,w^{\pm}} \weightE{z}{v}H(v),
\]
where the weights $\weightE{z}{v}$ are given by
\[
\weightE{z}{u^{\pm}}:=2\tan\theta_z\cdot (u^{\pm}\!-\!z)=i\cdot(w^{\mp}\!-\!w^{\pm}),
\]
\[
\weightE{z}{w^{\pm}}:=2\cot\theta_z\cdot (w^{\pm}\!-\!z)=i\cdot(u^{\pm}\!-\!u^{\mp}).
\]

The difference operators $\dpa$ and $\dopa$ given above map functions defined on $\L$ to
functions on $\DS$. Further, we introduce their formal adjoint $-(\dpa)^*$, $-(\dopa)^*$, also
denoted by $\dopa$ and $\dpa$, respectively, to keep the notation short. Note that no
confusion arises since the latter operators, vice versa, map functions defined on $\DS$ to
functions on $\L$.

\begin{definition} Let a function $F$ be defined on some subset of $\DS$. For $v\in\L$, we set
\[
[\dopa F](v):= -\frac{1}{4\weightL{v}}\sum_{z_s\sim v}\weightE{z_s}{v} F(z_s)
\quad\mathit{and}\quad [\dpa F](v):= -\frac{1}{4\weightL{v}}\sum_{z_s\sim
v}\ol{\weightE{z_s}{v}} F(z_s),
\]
if the right hand sides make sense. We call $F$ {\bf discrete holomorphic at $\bm{v}$} iff
$[\dopa F](v)=0$.
\end{definition}

These definitions are natural discretization of the formulas
\[
(\ol{\pa}\phi)(v)\approx \frac{\iint_{W(v)}(\ol{\pa} \phi)(x\!+\!iy)dxdy}{\Area(W(v))}  =
-\frac{i}{2\Area(W(v))}\oint_{\pa W(v)} \phi(\z)d\z,
\]
\[
({\pa}\phi)(v)\approx \frac{\iint_{W(v)} (\pa \phi)(x\!+\!iy)dxdy}{\Area(W(v))} =
\frac{i}{2\Area(W(v))}\oint_{\pa W(v)} \phi(\z)d\ol{\z},
\]
where $W(v)$ denotes the corresponding dual face (e.g., see Fig.~\ref{Fig:Notations}B, if
$v=u\in\G$). For constant and linear $\phi$'s, these discretizations give the true answers,
thus
\[
\left|[\dopa \phi|_\DS](v)-(\ol{\pa}\phi)(v)\right|\ ,\ \left|[\dpa \phi|_\DS](v)-(\pa
\phi)(v)\right|\ =\  O(\mesh)
\]
for all smooth functions $\phi$. Note that, in general, one cannot replace $O(\mesh)$ by
$O(\mesh^2)$.

\subsection{Factorization of $\bm\D^\mesh$. Basic properties of discrete holomorphic functions.}
\label{SectElemFactsHol}

The following factorization of $\D^\mesh$ was noted in \cite{Mer01} and \cite{Ken02}:

\begin{proposition}
\label{LaplFactor} For functions $H$ defined on subsets of $\L$ the following is fulfilled:
\[
[\D^\d H](u) = 4[\dpa \dopa H](u) = 4[\dopa \dpa H] (u)
\]
at all vertices $u\in\L$ where the right-hand side makes sense.
\end{proposition}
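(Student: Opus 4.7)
The plan is to verify the first equality $[\D^\mesh H](u)=4[\dpa\dopa H](u)$ by direct computation at an arbitrary vertex $u\in\L$; the second equality will follow by a fully parallel computation with $\dpa$ and $\dopa$ exchanged. By the symmetry between $\G$ and $\G^*$, it suffices to treat $u\in\G$. I enumerate the rhombi adjacent to $u$ as $z_s$, $s=1,\dots,n$, choosing labels so that the corners taken in counterclockwise order are $v_1=u$, $v_2=w_{s-1}$, $v_3=u_s\in\G$, $v_4=w_s\in\G^*$. The geometric key (already used implicitly in Lemma \ref{ApproxLemma}) is that the primal and dual diagonals of the rhombus are perpendicular, of lengths $2\mesh\cos\theta_s$ and $2\mesh\sin\theta_s$ respectively, which gives
\[
w_s-w_{s-1}=i\tan\theta_s\cdot(u_s-u).
\]

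First I expand $[\dopa H](z_s)$ using the explicit formula from the definition, obtaining the sum of two difference quotients, one over the primal diagonal and one over the dual diagonal. Then I substitute this into $[\dpa F](u)=(4\weightL{u})^{-1}\sum_s\overline{\weightE{u}{z_s}}F(z_s)$, with $\weightE{u}{z_s}=i(w_{s-1}-w_s)$ and hence $\overline{\weightE{u}{z_s}}=i(\bar w_s-\bar w_{s-1})$. The resulting double sum splits into two pieces. In the primal piece the factor $(\bar w_s-\bar w_{s-1})/(\bar u-\bar u_s)$ simplifies, via the conjugate of the diagonal identity above, to $i\tan\theta_s$, yielding $\sum_s\tan\theta_s(H(u_s)-H(u))$ after collecting the factors of $i$. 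In the dual piece the factor $(\bar w_s-\bar w_{s-1})/(\bar w_{s-1}-\bar w_s)$ equals $-1$, so this contribution telescopes around the closed cycle $w_1,\dots,w_n$ and vanishes. Using $\weightL{u}=\frac{1}{2}\weightG{u}$, I conclude
\[
4[\dpa\dopa H](u)=\weightG{u}^{-1}\sum_s\tan\theta_s(H(u_s)-H(u))=[\D^\mesh H](u).
\]

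The identity $[\D^\mesh H](u)=4[\dopa\dpa H](u)$ is obtained by the same computation, exchanging the two conjugations: one uses $\weightE{u}{z_s}$ in place of its conjugate and the ``non-conjugated'' difference quotients in the expansion of $[\dpa H](z_s)$; the primal piece again simplifies to a $\tan\theta_s$-weighted sum, and the dual piece again telescopes to zero. The whole argument is a few lines of algebra; the only real pitfall is the consistent tracking of signs and factors of $i$ in the antisymmetric weights $\weightE{v}{z}=i(v_{j+1}-v_{j-1})$, so that the primal/dual diagonal identity is applied with the correct orientation.
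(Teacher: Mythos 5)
Your proof is correct and is essentially the paper's own argument: both expand the composition directly at a vertex of $\G$, use the rhombus diagonal identity $w_s-w_{s-1}=i\tan\theta_s(u_s-u)$ so that the primal contributions reproduce the $\tan\theta_s$-weighted Laplacian, and observe that the dual contributions telescope to zero around the closed dual face. The sign and factor-of-$i$ bookkeeping you describe checks out against the paper's conventions for $\weightE{v}{z}$.
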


\begin{proof} Straightforward computations give (see Fig.~\ref{Fig:Notations}B for notations)
\[
[\dopa\dpa H](u) = \frac{1}{8\weightL{u}}\sum_{s=1}^k \left[\tan\theta_s\cdot[H(u_s)\!-\!H(u)]
- i\cdot [H(w_{s+1})\!-\!H(w_s)]\right]= \frac{[\D^\d H](u)}{4}
\]
and similarly for $[\dpa\dopa H](u)$.
\end{proof}

In Lemmas \ref{LemmaHolL}--\ref{LemmaHolBF} below we list basic properties of discrete
holomorphic functions coming from this factorization of $\D^\mesh$. We often omit the word
``discrete'' (e.g., writing ``holomorphic on $\DS$'' instead of ``discrete holomorphic on
$\DS$'') for short.

\begin{lemma}
\label{LemmaHolL} (i) Let a function $H$ be defined on some subset of $\L$. If $H$ is
holomorphic on $\L$, then $H$ is harmonic on both $\G$ and $\G^*$, i.e. both components
$H|_{\G}$, $H|_{{\G^*}}$ are complex-valued harmonic functions.

\smallskip

\noindent (ii) Conversely, in simply connected domains, $H$ is (complex-valued) harmonic on
$\G$ iff there exists a (complex-valued) harmonic on $\G^*$ function $\wt{H}$ such that
$H+i\wt{H}$ is holomorphic on $\L$. $\wt{H}$ is called {\bf discrete harmonic conjugate} to
$H$ and is defined uniquely up to an additive constant. Moreover, $\wt{H}$ is real-valued, if
$H$ is real-valued.
\end{lemma}

\begin{proof}
(i) The claim easily follows by writing $\D^\mesh H = 4\dpa\dopa H =0$.

\noindent (ii) For any $u\in\G$ and $z_s\in\DS$, $z_s\sim u$ (see Fig.~\ref{Fig:Notations}B
for notations), the holomorphicity condition at $z_s$ defines the increments
$\wt{H}(w_{s+1})\!-\!\wt{H}(w_s)$ uniquely. These increments are locally consistent, i.e.
their sum around $u$ is zero, iff $[\D^\mesh H](u)=0$. In simply connected domains, the local
consistency directly implies the global one.
\end{proof}

Due to Lemma \ref{LemmaHolL}, each holomorphic on $\L$ function is a couple of a
complex-valued harmonic function $H|_\G$ and its harmonic conjugate $H|_{\G^*}$. Since the
real part of $H|_\G$ depends only on the imaginary part of $H|_{\G^*}$ (and vice versa), both
functions
\begin{equation}
\label{DefBWonL} \cB H:=\Re H|_\G \!+\!i\Im H|_{\G^*}\quad\mathrm{and}\quad \cW H:=i\Im
H|_\G\!+\!\Re H|_{\G^*}
\end{equation}
are still holomorphic on $\L$ and completely independent of each other. Thus, to avoid a
``doubling of information'', at least unless some boundary conditions are specified, it is
natural to consider (as many authors do) only those $H$, which are purely real on $\G$ (black
vertices of $\L$) and purely imaginary on $\G^*$ (white vertices of $\L$), or vice versa.

\begin{lemma}
\label{LemmaHolDS} (i) Let $H$ be a (complex-valued) harmonic function defined on some subset
of $\G$ or $\G^*$. Then its derivative $F=\dpa H$ is holomorphic on $\DS$ (recall that,
defining $\dpa H$, we formally set $H|_{\G^*}:=0$ or $H|_\G:=0$, respectively). The same holds
true, if $H$ is a holomorphic function defined on some subset of $\L$.

\smallskip

\noindent (ii) Conversely, in simply connected domains, if $F$ is holomorphic on $\DS$, then
there exists a holomorphic on $\L$ function $H$ (which we call {\bf discrete primitive}
$\bm{\int^\mesh F(z)d^\mesh z}$) such that $\dpa H =F$. Its complex-valued harmonic components
$H|_\G$ and $H|_{\G^*}$ are defined uniquely up to (different) additive constants by
\[
H(v^{+})-H(v^{-}):=F(z)\cdot (v^{+}\!-\!v^{-}),\quad
z={\textstyle\frac{1}{2}}(v^{-}\!+\!v^{+}),
\]
where $v^{\pm}\in\G$ or $v^{\pm}\in\G^*$ are neighbors of $z\in\DS$.
\end{lemma}
\begin{proof}
(i) The claim easily follows by writing $\dopa F =\dopa\dpa H=\frac{1}{4}\D^\mesh H =0$.

\noindent (ii) Since we are looking for holomorphic $H$'s, it's necessary and sufficient to
have $\dpa (H|_\G)=\dpa (H|_{\G^*})=\frac{1}{2}F$ (see (\ref{DefHolEquiv})). Thus, the
increments $H(v^{+})\!-\!H(v^{-})$ are defined uniquely. For any $u\in\L$, the condition
$[\dopa F](u)=0$ guarantees that these increments are locally consistent (i.e., their sum
around $u$ is zero). In simply connected domains, this implies the global consistency as well.
\end{proof}

Due to Lemma \ref{LemmaHolDS}, there is a correspondence between holomorphic on $\DS$
functions and their primitives, which are complex-valued harmonic functions on $\G$ (and, in
the same way, on $\G^*$). Since the latter space is naturally split on purely real and purely
imaginary functions, the same should take place for functions, holomorphic on $\DS$.

\begin{definition} Let $z\in\DS$ be the center of the rhombus $u^{-}w^{-}u^{+}w^{+}$, where
$u^{\pm}\in\G$ and $w^{\pm}\in\G^*$, and $F$ be a complex-valued function defined at $z$. We
set
\[
[\cB F](z):=\Pr\left[F(z); \ol{u^{+}\!-\!u^{-}}\,\right]\quad \mathit{and}\quad [\cW
F](z):=\Pr\left[F(z); \ol{w^{+}\!-\!w^{-}}\,\right],
\]
where
\[
\Pr[F;\xi]:=\Re\lt(F\frac{\ol{\xi}}{|\xi|}\rt)\frac{\xi}{|\xi|}=\frac{F\!+\!\ol{F}\xi^2}{2|\xi|^2}
\]
denotes the orthogonal projection of $F$ onto the
line $\xi\R$. Note that $|\cB F|, |\cW F|\le |F|$ and $F=\cB F +\cW F$, since
$u^{+}\!-\!u^{-}\perp w^{+}\!-\!w^{-}$.
\end{definition}

\begin{remark}
\label{RemarkF=BF} Let $F=\dpa H$, where $H$ is purely real on~$\G$ and purely imaginary
on~$\G^*$, or, vice versa, $\Re H|_\G=0$ and $\Im H|_{\G^*}=0$. Then, $F = \cB F$ or $F = \cW
F$, respectively.
\end{remark}

The next Lemma shows that, exactly as it happens for holomorphic on $\L$ functions, each
holomorphic on $\DS$ function $F$ consists of two completely independent halves: $\cB F$ and
$\cW F$, the first coming as a gradient of a real-valued harmonic on $\G$ function and the
second as a gradient of a real-valued harmonic on $\G^*$ function.

\begin{lemma}
\label{LemmaHolBF} A function $F$ is holomorphic on some subset of $\DS$ if and only if both
projections $\cB F$ and $\cW F$ are holomorphic on this subset. Moreover, in this case,
\[
\textstyle \cB F= \dpa \left[\cB \left[\int^\mesh F(z)d^\mesh z\right]\right]\quad
\mathit{and}\quad \cW F= \dpa \left[\cW \left[\int^\mesh F(z)d^\mesh z\right]\right],
\]
where $H=\int^\mesh F(z)d^\mesh z$ is any (local) primitive of $F$ and \mbox{$\cB H$, $\cW H$}
are given by (\ref{DefBWonL}).
\end{lemma}

\begin{proof} It is easy to check that
\[
\dopa[\cB F]=\Re[\dopa F]\ \ \mathrm{and}\ \ \dopa[\cW F]=i\Im[\dopa F]\ \ \mathrm{on}\
\G^{\phantom{*}},
\]
\[
\dopa[\cB F]=i\Im[\dopa F]\ \ \mathrm{and}\ \ \dopa[\cW F]=\Re[\dopa F]\ \ \mathrm{on}\ \G^*,
\]
thus $F$ is holomorphic iff both $\cB F$ and $\cW F$ are holomorphic. In this case, the
primitive $H$ is locally well-defined (up to additive constants), $F= \dpa H = \dpa[\cB H] +
\dpa [\cW H]$, and so $\cB F = \dpa[\cB H]$, $\cW F =\dpa[\cW H]$ (see (\ref{DefBWonL}) and
Remark~\ref{RemarkF=BF}).
\end{proof}

It is worthwhile to note that there exists a natural {\bf averaging operator $\bm{m^\mesh}$},
which maps functions defined on $\L$ to functions on $\DS$. Namely, $m^\mesh$ is given by
\begin{equation}
\label{mMeshLdef} [m^\mesh H](z):= {\textstyle
\frac{1}{4}}\,[H(u^{-})\!+\!H(w^{-})\!+\!H(u^{+})\!+\!H(w^{+})],\quad z\in\DS,
\end{equation}
where, as above, $u^{\pm}\in\G$ and $w^{\pm}\in\G^*$ denote neighbors of $z\in\DS$.

\begin{lemma}
Let $H$ be holomorphic on (some part of) $\L$. Then the averaged function $m^\mesh H$ is
holomorphic on $\DS$ at all $u\in\L$, where the expression $[\dopa m^\mesh H](u)$ makes sense.
\end{lemma}

\begin{proof}

\noindent The condition $[\dopa H](z_s)=0$ (see Fig. \ref{Fig:Notations}B for notations)
implies
\[
[m^\mesh H](z_s)=\frac{H(u)}{2} +
\frac{H(w_{s+1})(w_{s+1}\!-\!u)-H(w_s)(w_s\!-\!u)}{2(w_{s+1}\!-\!w_s)}\,.
\]
Summing the terms $(w_{s+1}\!-\!w_s)[m^\mesh H](z_s)$ around $u$, one arrives at $[\dopa
m^\mesh H](u)=0$.
\end{proof}

Below we will also need the averaging operator $m^\mesh$ (adjoint to (\ref{mMeshLdef})) which,
conversely, maps functions defined on $\DS$ to functions on $\L$:
\begin{equation}
\label{mMeshDSdef} [m^\mesh F](v):= \frac{1}{4\weightL{v}}\sum_{v\sim
z_s\in\DS}\weightDS{z_s}F(z_s),\quad v\in\L.
\end{equation}

Unfortunately, there are two {\it unpleasant facts} that make discrete complex analysis on
rhombic lattices more complicated than the standard continuous theory and even than the square
lattice discretization:
\begin{itemize}

\item One cannot (pointwise) multiply discrete holomorphic functions:
the product $FG$ is {\it not} necessary holomorphic if both $F$ and $G$ are holomorphic.

\item One cannot differentiate discrete holomorphic functions infinitely many times. Moreover,
we don't know any ``local'' discretizations of $\pa$ that map holomorphic functions on $\L$ or
$\DS$ to holomorphic functions defined on the same set ($\L$ or $\DS$). One cannot use natural
combinations of $\dpa$ and $m^\mesh$ since both $\dpa F$ and $m^\mesh F$ are {\it not}
necessary exact holomorphic on $\L$, if $F$ is holomorphic on $\DS$.
\end{itemize}
The first obstacle (multiplication) exists in all discrete theories. Concerning the second,
note that in our case there is some ``nonlocal'' discrete differentiation (so-called dual
integration, see \cite{Duf68} and \cite{Mer07}). Also in two particular cases the local
differentiation leads to holomorphic function again: for the classical definition on the
square grid (since in this case both $\L$ and $\DS$ are square grids, see the book by
J.~Lelong-Ferrand \cite{L-F55}) and for some particular definition on the triangular lattice
(see \cite{DN03}).

\subsection{The Cauchy kernel. The Cauchy formula. Lipschitzness}

The following asymptotic form of the discrete Cauchy kernel is due to R.~Kenyon.

\begin{theorem}[\bf Kenyon]
\label{ThmKenyonHol} Let $z_0\in\DS$. There exists a unique function $F=K(\ccdot;z_0):\L\to\C$
such that
\begin{quotation}
\noindent (i)\phantom{i} $[\dopa F](z)=0$ for all $z\ne z_0$ and $[\dopa F](z_0)\cdot
\weightDS{z_0}=1$;

\smallskip

\noindent (ii) $|F(u)|\to 0$ as $|u-z_0|\to\iy$.
\end{quotation}
Moreover, the following asymptotics hold:
\[
K(u;z_0)=\frac{2}{\pi}\Pr\left[\frac{1}{u\!-\!z_0}\,;\,\ol{u_0^{+}\!-\!u_0^{-}}\,\right]+
O\lt(\frac{\mesh}{|u\!-\!z_0|^2}\rt),\quad u\in\G;
\]
\[
K(w;z_0)=\frac{2}{\pi}\Pr\left[\frac{1}{w\!-\!z_0}\,;\,\ol{w_0^{+}\!-\!w_0^{-}}\,\right]+
O\lt(\frac{\mesh}{|w\!-\!z_0|^2}\rt),\quad w\in\G^*,
\]
where $u_0^{\pm}\in\G$ and $w_0^{\pm}\in\G^*$ are the black and white neighbors of $z_0$,
respectively.
\end{theorem}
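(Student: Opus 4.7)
The plan is to handle uniqueness, existence, and asymptotics separately, following the spirit of Kenyon's construction of the free Green's function.

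Uniqueness reduces to a discrete Liouville theorem. If $F_1$ and $F_2$ both satisfy (i)--(ii), then $F := F_1 - F_2$ is discrete holomorphic on \emph{all} of $\L$ and $|F|\to 0$ at infinity. By the factorization $\D^\mesh = 4\dpa\dopa$ of Proposition \ref{LaplFactor}, both $F|_\G$ and $F|_{\G^*}$ are discrete harmonic on their respective full graphs. Applying Harnack's inequality (Proposition \ref{PropHarnack}(ii)) on balls $B^\mesh_\G(u_0,R)$ and sending $R\to\infty$ forces any bounded discrete harmonic function on $\G$ to be constant; combined with the decay this gives $F\equiv 0$.

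For existence, I would realize $K$ as an explicit finite difference of free Green's functions, mimicking a source-derivative of $G_\G$ (respectively $G_{\G^*}$) along the diagonals of the rhombus at $z_0$. Labelling the corners $u_0^{(1)},w_0^{(1)},u_0^{(2)},w_0^{(2)}$ counterclockwise (with $u_0^{(j)}\in\G$, $w_0^{(j)}\in\G^*$), set
$$
K(u;z_0):=\frac{-2i\,\ol{w_0^{(1)}\!-\!w_0^{(2)}}}{\weightDS{z_0}}\Bigl[G_\G(u;u_0^{(1)})-G_\G(u;u_0^{(2)})\Bigr],\quad u\in\G,
$$
and analogously $K(w;z_0):=\frac{2i\,\ol{u_0^{(1)}-u_0^{(2)}}}{\weightDS{z_0}}[G_{\G^*}(w;w_0^{(1)})-G_{\G^*}(w;w_0^{(2)})]$ for $w\in\G^*$. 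A short calculation using $\weightE{v_j}{z_0} = i(v_{j+1}\!-\!v_{j-1})$ and $\weightG{v}=2\weightL{v}$ shows that $\D^\mesh K$ is supported precisely on the four corners of $z_0$, where it equals $\ol{\weightE{v}{z_0}}/(\weightL{v}\weightDS{z_0})$. This is exactly the formula that $4\dpa\dopa K$ would produce if $\dopa K$ vanished outside $z_0$ and equalled $\weightDS{z_0}^{-1}$ at $z_0$. Hence the difference between $\dopa K$ and this candidate is anti-holomorphic on $\DS$; since each Green-function difference behaves like $O(|u|^{-1})$ by Theorem \ref{ThmKenyonHarm}, both $K$ and $\dopa K$ decay at infinity, and the Liouville argument from the first paragraph (applied in the anti-holomorphic setting) forces property (i) to hold exactly.

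For the asymptotics, substitute $G_\G(u;v)=(2\pi)^{-1}\log|u-v|+O(\mesh^2|u-v|^{-2})$ into the formula for $K|_\G$. Expanding $\log|u-u_0^{(1)}|-\log|u-u_0^{(2)}| = -\Re[(u_0^{(1)}-u_0^{(2)})/(u-z_0)]+O(\mesh^2/|u-z_0|^2)$ and using the rhombus identities $w_0^{(1)}-w_0^{(2)}=i\tan\theta(z_0)\,(u_0^{(1)}-u_0^{(2)})$, $\weightDS{z_0}=\mesh^2\sin 2\theta(z_0)$, the main term of $K(u;z_0)$ collapses to
$$
\frac{1}{\pi}\left[\frac{1}{u-z_0} + \frac{\ol{u_0^{(1)}-u_0^{(2)}}^{\,2}}{|u_0^{(1)}-u_0^{(2)}|^2}\cdot\frac{1}{\ol{u-z_0}}\right] = \frac{2}{\pi}\Pr\!\left[\frac{1}{u-z_0}\,;\,\ol{u_0^{(1)}\!-\!u_0^{(2)}}\right],
$$
which is precisely the claimed leading term. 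The error is controlled by the overall prefactor $|w_0^{(1)}-w_0^{(2)}|/\weightDS{z_0}\asymp\mesh^{-1}$ times the $O(\mesh^2/|u-z_0|^2)$ remainder, yielding $O(\mesh/|u-z_0|^2)$. The real obstacle lies entirely upstream: the sharp $O(\mesh^2|u-v|^{-2})$ remainder of Theorem \ref{ThmKenyonHarm} is indispensable, since any weaker bound would be amplified by the $\mesh^{-1}$ prefactor and destroy the quantitative estimate for $K$. Once that refined Green's function estimate is in hand (treated in Appendix A), the remaining steps above are essentially algebraic.
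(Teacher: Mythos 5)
Your construction is genuinely different from the paper's. The paper builds $K(\cdot;z_0)$ directly as a contour integral $\frac{1}{\pi}\int e(\l,v;z_0)\,d\l$ of modified discrete exponentials and extracts the asymptotics by the same Laplace-method analysis it uses for the free Green's function; you instead take Theorem~\ref{ThmKenyonHarm} as a black box and realize $K$ as a normalized discrete difference of free Green's functions across the two diagonals of the rhombus at $z_0$. The algebra checks out: with $\weightE{v_j}{z_0}=i(v_{j+1}\!-\!v_{j-1})$ one gets $\frac14\D^\mesh K=\dpa\bigl[\weightDS{z_0}^{-1}\1_{z_0}\bigr]$, supported on the four corners; the prefactor $-2i\,\ol{w_0^{(1)}\!-\!w_0^{(2)}}/\weightDS{z_0}$ collapses, via $w_0^{(1)}\!-\!w_0^{(2)}=i\tan\theta(z_0)\,(u_0^{(1)}\!-\!u_0^{(2)})$ and $\weightDS{z_0}=\mesh^2\sin2\theta(z_0)$, to $-4/(u_0^{(1)}\!-\!u_0^{(2)})$; and the expansion of the log-difference then reproduces exactly $\frac{2}{\pi}\Pr\bigl[(u\!-\!z_0)^{-1};\ol{u_0^{(1)}\!-\!u_0^{(2)}}\bigr]$ with an $O(\mesh|u\!-\!z_0|^{-2})$ error. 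You are also right that the sharp $O(\mesh^2|u\!-\!u_0|^{-2})$ remainder in (\ref{Gasympt}) is exactly what makes the $\mesh^{-1}$ prefactor harmless. What your route buys is that all hard analysis is concentrated in one place (the Green's function); what it loses is the explicit integral representation of $K$, which the paper exploits elsewhere. The uniqueness argument is the same as the paper's (Corollary~\ref{CorLipHarm}).

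The one step you must redo is the Liouville argument upgrading $\dpa(\dopa K-G_0)=0$, with $G_0:=\weightDS{z_0}^{-1}\1_{z_0}$, to $\dopa K=G_0$. This is \emph{not} ``the Liouville argument from the first paragraph'': that argument applies to functions on $\L$, where the factorization $\D^\mesh=4\dpa\dopa$ is available; here $\dopa K-G_0$ lives on $\DS$, which carries no Laplacian and no such factorization. The ready-made tool for decaying $\dpa$-closed functions on $\DS$ would be Proposition~\ref{PropLipHol}, but its proof rests on the Cauchy formula and hence on Theorem~\ref{ThmKenyonHol} itself --- invoking it here would be circular. The non-circular fix: conjugate to get a $\dopa$-closed (holomorphic) function on $\DS$, integrate it by property~(4) of Sect.~\ref{SectElemFactsHol} to a discrete harmonic function $H$ on $\L$; the decay of $\dopa K-G_0$ at infinity (which you should verify: $K(v)=O(|v\!-\!z_0|^{-1})$ gives $[\dopa K](z)=O(\mesh^{-1}|z\!-\!z_0|^{-1})\to0$ at fixed $\mesh$) makes $H$ sublinear, so Corollary~\ref{CorLipHarm} forces $H|_\G$ and $H|_{\G^*}$ to be constant, whence $\dpa H=0$ and $\dopa K\equiv G_0$. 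With that patch the proof is complete.
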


\begin{proof}
We give a short sketch of Kenyon's arguments \cite{Ken02} in
Appendix~\ref{SectAKenyonFreeGreen}.
\end{proof}

Let $\O^\d_\G$ be a bounded simply connected discrete domain (see Fig.~\ref{Fig:DiscrDomain}A,
\ref{Fig:DiscrDisc}A). Denote by $B=u_0u_1u_2..u_n$, $u_s\in\G$, its closed polyline boundary,
enumerated in counter clockwise order. Denote by $W=w_0w_1w_2..w_m$, $w_s\in\G^*$, the closed
polyline path (enumerated in counter clockwise order) passing through the centers of all faces
touching $B$ from inside. For functions $G$ defined on $B_\DS:=\DS\cap B$ and $W_\DS:=\DS\cap
W$, we introduce ``discrete contour integrals''
\[
\oint^\mesh_{B} G(z)d^\mesh z := \sum_{s=0}^{n-1}
G\left({\textstyle\frac{1}{2}}(u_{s+1}\!+\!u_s)\right)\cdot(u_{s+1}\!-\!u_s),
\]
\[
\oint^\mesh_{W} G(z)d^\mesh z := \sum_{s=0}^{m-1}
G\left({\textstyle\frac{1}{2}}(w_{s+1}\!+\!w_s)\right)\cdot(w_{s+1}\!-\!w_s).
\]
We also set $\O^\mesh_\L:=\L\cap\O^\mesh$,
\[
\O^\mesh_\DS:=\DS\cap \O^\mesh,\quad \ol{\O}^\mesh_\DS:=\O^\mesh\cup B_\DS\ \ \mathrm{and}\ \
\Int\O^\mesh_\DS:=\O^\mesh_\DS\setminus W_\DS,
\]
where $\O^\mesh$ denotes the polygonal representation of $\O^\mesh_\G$.

\begin{proposition}[\bf Cauchy formula]
\label{CauchyFormula} Let $F:\ol{\O}^\mesh_\DS\to\C$ be a discrete holomorphic function, i.e.,
$[\ol{\pa}^\mesh F](v)=0$ for all $v\in \O^\mesh_\L$. Then, for any $z_0\in \Int\O^\mesh_\DS$,
\[
F(z_0)=\frac{1}{4i}\lt[\oint^\d_{B} K(w(z);z_0)F(z)d^\d z + \oint^\d_{W} K(u(z);z_0)F(z)d^\d
z\rt],
\]
where $w(z)\in W_{\G^*}:=\G^*\cap W$ denotes the nearest ``white'' vertex to $z\in B_\DS$, and
$u(z)\in B_\G:=\G\cap B$ denotes the nearest ``black'' vertex to $z\in W_\DS$ (see
Fig.~\ref{Fig:DiscrDisc}A).
\end{proposition}

\begin{proof}
By definitions of the discrete Cauchy kernel $K$ and the operator $\dopa$, one has
\[
4F(z_0)\ = \!\!\!\!\!\mathop{\sum\sum}\limits_{z\in \O^\mesh_\DS,\ z\sim v,\ v\in
\ol{\O}^\mesh_\L}\!\!\!\!\! F(z)\weightE{z}{v}K(v;z_0)\ =
\!\!\!\!\!\mathop{\sum\sum}\limits_{v\in \ol{\O}^\mesh_\L,\ v\sim z,\ z\in\O^\d_\DS}\!\!\!\!\!
K(v;z_0)\weightE{z}{v}F(z),
\]
where $\ol{\O}^\mesh_\L:=\O^\mesh_\L\cup B_\G$. Since $\sum_{v\sim z,\
z\in\ol{\O}^\mesh_\DS}\weightE{z}{v}F(z)=0$ for all $v\in\O^\mesh_\L$, this gives
\begin{align*}
4F(z_0)\ &= \!\!\!\!\!\sum_{v\in B_\G,\ v\sim z,\ z\in \O^\mesh_\DS}\!\!\!\!\!
K(v;z_0)\weightE{z}{v}F(z)\ \ - \!\!\!\!\!\sum_{v\in \O^\mesh_\DS,\ v\sim z,\ z\in
B_\DS}\!\!\!\!\! K(v;z_0)\weightE{z}{v}F(z)\cr &=\ \sum_{z\in W_\DS}
K(u(z);z_0)F(z)\weightE{z}{u(z)}\ -\ \sum_{z\in B_\DS} K(w(z);z_0)F(z)\weightE{z}{w(z)}.
\end{align*}
Both sums coincide with the discrete contour integrals defined above.
\end{proof}

The Cauchy formula may be nicely rewritten in the asymptotic form for both components $\cB F$
and $\cW F$ of a holomorphic function $F$ separately. Recall that these components are
completely independent of each other (see Lemma~\ref{LemmaHolBF}).

\begin{corollary}[\bf asymptotic Cauchy formula]
\label{CauchyFormBF} Let $F:\ol{\O}^\mesh_\DS\to\C$ be a discrete holomorphic function,
$z_0\in\Int\O^\mesh_\DS$ and $u_0^{\pm}\in \G$, $w_0^{\pm}\in\G^*$ be its neighboring
vertices. Then
\[
[\cB F](z_0) = \Pr\lt[\frac{1}{2\pi i}\lt(\oint^\mesh_B\frac{[\cB F](z)}{z\!-\!z_0}\,d^\mesh z
+ \oint^\mesh_W\frac{[\cB F](z)}{z\!-\!z_0}\,d^\mesh z\rt)\ ;\ \ol{u_0^{+}\!-\!u_0^{-}}\,\rt]
+ O\lt(\frac{\mesh M L} {d^2}\rt),
\]
where $d=\dist(z_0,W)$, $M=\max_{z\in B_\DS\cup W_\DS}|F(z)|$ and $L=\Length(B)+\Length(W)$.
The same formula holds true for $\cW F$, if one replaces $u_0^{+}\!-\!u_0^{-}$ by
$w_0^{+}\!-\!w_0^{-}$.
\end{corollary}

\begin{proof}
We plug Kenyon's asymptotics (Theorem~\ref{ThmKenyonHol}) into Proposition
\ref{CauchyFormula}:

\noindent if $z\in W_\DS$, then $[\cB F](z)d^\mesh z/4i\in \R$, and so
\[
K(u(z);z_0)\cdot \frac{[\cB F](z)d^\mesh z}{4i} = \Pr\lt[\frac{[\cB F](z)d^\mesh z} {2\pi
i(z\!-\!z_0)}\ ;\, \ol{u_0^{+}\!-\!u_0^{-}}\,\rt]+O\lt(\frac{\mesh M |d^\mesh z|} {d^2}\rt);
\]

\noindent if $z\in B_\DS$, then $[\cB F](z)d^\mesh z/4i\in i\R$, and so, again,
\[
K(w(z);z_0)\cdot \frac{[\cB F](z)d^\mesh z}{4i} = \Pr\lt[\frac{[\cB F](z)d^\mesh z} {2\pi
i(z\!-\!z_0)}\ ;\, \ol{u_0^{+}\!-\!u_0^{-}}\,\rt]+O\lt(\frac{\mesh M |d^\mesh z|} {d^2}\rt),
\]
since $w_0^{+}\!-\!w_0^{-}\perp u_0^{+}\!-\!u_0^{-}$. The claim follows by summing along $B$
and $W$.
\end{proof}

Finally, the Cauchy formula implies Lipschitzness of discrete holomorphic functions. Since
$\cB F$ and $\cW F$ are independent of each other, this should be valid for both components
separately. On the other hand, the phase of $[\cB F](z)$ depends only on the direction of the
edge $u^{-}u^{+}$ passing through $z$, so one cannot expect that $[\cB F](z_1)$ and $[\cB
F](z_2)$ are close in the usual sense, if $z_1$ and $z_2$ are close. Thus, we firstly use the
operator $m^\mesh$ defined by (\ref{mMeshDSdef}) and average our function around vertices
$v\in\L$.

\begin{proposition}[\bf Lipschitzness of discrete holomorphic functions]
\label{PropLipHol} Let \mbox{$u\in\G$} and let $F$ be discrete holomorphic in
$\ol{B}^\mesh_\DS(u,R)$. Then, for all $z_s\sim u$, $z_s\in\DS$ (see Fig.~\ref{Fig:Notations}B
for notations),
\[
\left|[\cB F](z_s)-\Pr\left[2[m^\mesh (\cB F)](u);\ol{u_s\!-\!u}\,\right]\right|\le
\const\cdot \frac{M\mesh}{R}, \quad \mathit{where}\quad M=\max_{\ol{B}^\mesh_\DS(u,R)}|F(z)|.
\]
The same formula holds true for $\cW F$, if one replaces $u_s-u$ by $w_{s+1}-w_s$.
Furthermore, if $v_1,v_2\in B^\mesh_\L(u,r)$, $r<R$, then
\[
\left|[m^\mesh F](v_2)-[m^\mesh F](v_1)\right|\le \const\cdot\frac{M|v_2\!-\!v_1|}{R-r}.
\]
\end{proposition}

\begin{proof}
Let $B$ and $W$ be the same discrete contours as above (see Fig.~\ref{Fig:DiscrDisc}A), note
that their lengths are bounded by $\const\cdot R$. Applying Corollary~\ref{CauchyFormBF} for
all $z_s\sim u$ and taking into account that $|(z\!-\!z_s)^{-1}-(z\!-\!u)^{-1}|\le \const\cdot
\mesh/R^2$, one obtains
\[
[\cB F](z_s)=\Pr[A;\ol{u_s\!-\!u}\,]+O\lt(\frac{M\mesh}{R}\rt),\quad A:= \frac{1}{2\pi
i}\lt(\oint^\mesh_B\frac{[\cB F](z)}{z-u}\,d^\mesh z + \oint^\mesh_W\frac{[\cB
F](z)}{z-u}\,d^\mesh z\rt).
\]
Due to the identity
\[
\frac{1}{4\weightL{u}}\sum_{z_s\sim u} \weightDS{z_s}\Pr[A;\ol{u_s\!-\!u}\,]=
\frac{1}{4\weightL{u}}\sum_{z_s\sim u} \mesh^2\sin2\theta_s\cdot
\frac{A+e^{-2i\arg(u_s-u)}\ol{A}}{2}
\]
\[
=\frac{A}{2}+ \frac{\mesh^2\ol{A}}{16i\weightL{u}}\sum_{u_s\sim
u}(e^{-2i\arg(w_s-u)}-e^{-2i\arg(w_{s+1}-u)})=\frac{A}{2},
\]
it gives
\[
[m^\mesh (\cB F)](u) = \frac{A}{2} + O\lt(\frac{M\mesh}{R}\rt).
\]
In particular, $\left|[\cB F](z_s)-\Pr[2[m^\mesh (\cB F)](u);\ol{u_s\!-\!u}]\right|
\le\const\cdot M\mesh/R$. The proof for $\cW F$ goes exactly in the same way, since
$e^{-2i\arg(w_{s+1}-w_s)}=-e^{-2i\arg(u_s-u)}$. Moreover, using the same calculations for
$[m^\mesh F](w_s)$, one obtains
\[
\left|[m^\mesh (\cB F)](w_s) - [m^\mesh (\cB F)](u)\right|\ ,\ \left|[m^\mesh (\cW F)](w_s) -
[m^\mesh (\cW F)](u)\right| \le \const \cdot \frac{M\mesh}{R},
\]
so the same estimate holds true for the function $m^\mesh F = m^\mesh(\cB F)+ m^\mesh(\cW F)$.

Summing these inequalities along the path connecting $v_1$ and $v_2$ inside $B^\mesh_\G(u,r)$
(due to condition \sps, there is a path of length $\le\const\cdot\mesh^{-1}|v_2\!-\!v_1|$),
one immediately arrives at the estimate for $|[m^\mesh F](v_2) - [m^\mesh F](v_1)|$.
\end{proof}

\section{Convergence theorems}
\label{SectConvThms} \setcounter{equation}{0}

\subsection{Precompactness in the \mbox{$\bm{C^1}$-topology}.}
\label{SectPrecompactnessC1}

In the continuous setup, each uniformly bounded family of harmonic functions (defined in some
common domain $\O$) is precompact in the \mbox{$C^{\iy}$-topology}. Using
Corollary~\ref{CorLipHarm} and Proposition~\ref{PropLipHol}, it is easy to prove the analogue
of this statement for discrete harmonic functions.

Below we widely use the following convention. Let a function $H^\mesh$ be defined in a
discrete domain $\O^\mesh_\G\ss\G^\mesh$. Then, $H^{\mesh}$ can be thought of as defined in
its polygonal representation $\O^{\mesh}\ss\C$ by some standard continuation procedure, say,
linear on edges and harmonic inside faces. Note that this continuation is bounded in
$\O^\mesh$, if $H^\mesh$ is bounded on $\O^\mesh_\G$, and Lipschitz in $\O^\mesh$, if
$H^\mesh$ is Lipschitz on $\O^\mesh$(with the same constants).

\begin{proposition}
\label{PropUniformComp} Let $H^{\mesh_j}:\O^{\mesh_j}_\G\to\R$ be (real-valued) discrete
harmonic functions defined in discrete domains $\O^{\mesh_j}_{\G}\ss\G^{\mesh_j}$ with
$\mesh_j\to 0$. Let $\O\ss\bigcup_{n=1}^{+\iy}\bigcap_{j=n}^{+\iy}\O^{\mesh_j}\ss\C$ be some
continuous domain. If $H^{\mesh_j}$ are uniformly bounded on $\O$, i.e.
\[
\max\nolimits_{u\,\in\, \O^{\mesh_j}_\G\cap\,\O}|H^{\mesh_j}(u)|\le M < +\iy\quad \mathit{for\
all}\ \ j,
\]
then there exists a subsequence $\mesh_{j_k}\to 0$ (which we denote by $\mesh_k$ for short)
and two functions $h:\O\to\R$, $f:\O\to\C$ such that (we denote by ``$\rra$'' uniform
convergence)
\[
H^{\mesh_k}\rra h\quad\mathit{uniformly~on\ compact\ subsets}\ K\ss\O
\]
and
\begin{equation}
\label{GradConv} \frac{H^{\mesh_k}(u_k^{+})-H^{\mesh_k}(u_k^{-})}{|u_k^{+}-u_k^{-}|}\rra
\Re\lt[f(u)\cdot \frac{u_k^{+}-u_k^{-}}{|u_k^{+}-u_k^{-}|}\rt],
\end{equation}
if $u_k^{\pm}\in\G^{\mesh_k}$, $u_k^{+}\sim u_k^{-}$ and $u_k^{\pm}\to u\in K\ss\O$ as
$k\to\infty$. Moreover, the limit function~$h$, $|h|\le M$, is harmonic in $\O$ and $f=
h'_x-ih'_y= 2\pa h$ is analytic in $\O$.
\end{proposition}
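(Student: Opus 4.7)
The plan is to extract a subsequence along which $H^{\mesh_{j}}$ converges uniformly on compacts to some continuous $h$, then a further subsequence along which the discrete derivatives $F^{\mesh_{j}}:=\dpa H^{\mesh_{j}}$ converge (after local averaging) to some continuous $g$, and finally to identify $f=4g=2\pa h$ via the exact reconstruction formula for $\dpa$. Fix an exhaustion $K_{1}\ss K_{2}\ss\dots\ss\O$ by compact sets with $\dist(K_{n},\pa\O)\ge 2r_{n}>0$ and $\bigcup_{n}K_{n}=\O$. For $j$ sufficiently large every disc $B(u,2r_{n})$, $u\in K_{n}$, is contained in $\O^{\mesh_{j}}$, so Corollary~\ref{CorLipHarm} gives $|H^{\mesh_{j}}(u_{2})-H^{\mesh_{j}}(u_{1})|\le\const\cdot M|u_{2}-u_{1}|/r_{n}$ on $K_{n}\cap\G^{\mesh_{j}}$. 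Extending $H^{\mesh_{j}}$ linearly on edges, the family is uniformly bounded and equicontinuous on each $K_{n}$; a diagonal application of the Arzel\`a--Ascoli theorem produces a subsequence (still denoted by $\mesh_{k}$) and a continuous function $h:\O\to\R$, $|h|\le M$, with $H^{\mesh_{k}}\rra h$ uniformly on compact subsets of $\O$.

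Next, set $F^{\mesh_{k}}:=\dpa H^{\mesh_{k}}$ on $\DS^{\mesh_{k}}$, extending $H^{\mesh_{k}}$ by zero on $\G^{*}$ in the formula for $\dpa$. By Proposition~\ref{LaplFactor}, $4\dopa F^{\mesh_{k}}(u)=[\D^{\mesh_{k}}H^{\mesh_{k}}](u)$, which vanishes at every interior $u\in\G$ by discrete harmonicity, while at $w\in\G^{*}$ the same formula reduces, via the explicit computation given in the proof of that Proposition, to a telescoping sum $-i\sum_{s}(H^{\mesh_{k}}(u_{s+1})-H^{\mesh_{k}}(u_{s}))$ around $w$, which also vanishes. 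Hence $F^{\mesh_{k}}$ is discrete holomorphic at every interior $v\in\L^{\mesh_{k}}$, with moreover $F^{\mesh_{k}}=\cB F^{\mesh_{k}}$ by property~(6) of Section~\ref{SectElemFactsHol}. Applying the Lipschitz bound on $H^{\mesh_{k}}$ to adjacent $\G^{\mesh_{k}}$-vertices gives $|F^{\mesh_{k}}|\le\const\cdot M/r_{n}$ on $K_{n}$; Proposition~\ref{PropLipHol} then shows that the averages $m^{\mesh_{k}}F^{\mesh_{k}}:\L^{\mesh_{k}}\to\C$ are uniformly Lipschitz on slightly smaller compacts, and a further diagonal extraction yields a continuous function $g:\O\to\C$ with $m^{\mesh_{k}}F^{\mesh_{k}}\rra g$ uniformly on compact subsets of $\O$.

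For (\ref{GradConv}), fix $u\in K\ss\O$, $u_{1}^{k}\sim u_{2}^{k}\in\G^{\mesh_{k}}$ with $u_{j}^{k}\to u$, and set $z^{k}=\frac{1}{2}(u_{1}^{k}+u_{2}^{k})$, $\xi^{k}=(u_{2}^{k}-u_{1}^{k})/|u_{2}^{k}-u_{1}^{k}|$. Because $H^{\mesh_{k}}|_{\G^{*}}\equiv 0$, the definition of $\dpa$ yields the exact identity $H^{\mesh_{k}}(u_{2}^{k})-H^{\mesh_{k}}(u_{1}^{k})=2F^{\mesh_{k}}(z^{k})(u_{2}^{k}-u_{1}^{k})$. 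Substituting the estimate $F^{\mesh_{k}}(z^{k})=\Pr[\,2[m^{\mesh_{k}}F^{\mesh_{k}}](u_{1}^{k});\,\ol{u_{2}^{k}-u_{1}^{k}}\,]+O(\mesh_{k}/r_{n})$ from Proposition~\ref{PropLipHol} together with the elementary identity $\Pr[A;\bar\xi]\cdot\xi=\Re(A\xi)$ valid for $|\xi|=1$, we obtain after dividing by $|u_{2}^{k}-u_{1}^{k}|$
\[
\frac{H^{\mesh_{k}}(u_{2}^{k})-H^{\mesh_{k}}(u_{1}^{k})}{|u_{2}^{k}-u_{1}^{k}|}=\Re\lt[4[m^{\mesh_{k}}F^{\mesh_{k}}](u_{1}^{k})\cdot\xi^{k}\rt]+o(1),
\]
which converges uniformly on $K$ to $\Re[f(u)\cdot\xi^{k}]$ with $f:=4g$. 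Analyticity of $f$ in $\O$ follows by applying Corollary~\ref{CauchyFormBF} to $F^{\mesh_{k}}$ on any fixed interior disc and passing to the limit; then $h$ is $C^{1}$ in $\O$ with $\pa h=f/2$, i.e.\ $f=h'_{x}-ih'_{y}=2\pa h$, so $h$ is harmonic. The main point requiring care is the twofold diagonal extraction together with the numerical bookkeeping in the identification $f=4g$ via the projection identity above; all other ingredients follow directly from the Lipschitz estimates of Corollary~\ref{CorLipHarm} and Proposition~\ref{PropLipHol} already proved.
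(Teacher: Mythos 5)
Your proof follows essentially the same route as the paper's: Corollary~\ref{CorLipHarm} plus Arzel\`a--Ascoli for the functions themselves, discrete holomorphy of the difference quotients, Proposition~\ref{PropLipHol} plus Arzel\`a--Ascoli for their averages, and the projection identity $\Pr[A;\bar\xi]\cdot\xi=\Re(A\xi)$ to identify the limit of the discrete gradients. Your normalization $F=\dpa H$ (half of the paper's difference quotient $\frac{H(u_2)-H(u_1)}{u_2-u_1}$) makes $f=4g$ where the paper has $f=2\lim m^\mesh F^\mesh$; the bookkeeping is consistent, and your explicit verification that $\dopa F$ also vanishes at $\G^*$-vertices (via the telescoping sum) is a detail the paper leaves to property~(6) of Section~\ref{SectElemFactsHol}. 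The one genuine divergence is the final identification: the paper proves harmonicity of $h$ by pairing $\D^\mesh$ with smooth compactly supported test functions and then obtains $f=2\pa h$ by summing (\ref{GradConv}) along polyline approximations of segments, whereas you first claim analyticity of $f$ from Corollary~\ref{CauchyFormBF} and then deduce harmonicity of $h$ from $2\pa h=f$. This variant works, but both of its steps are asserted rather than carried out: ``passing to the limit'' in the discrete Cauchy formula requires converting the two contour sums over $B$ and $W$ into $\frac{1}{2\pi i}\oint f(z)\,dz/(z-z_0)$ (using that $F\,d^\mesh z$ is real on $B$ and purely imaginary on $W$, as in the proof of Corollary~\ref{CauchyFormBF}, together with the already-established uniform convergence of $F$ on the contours), and ``$h$ is $C^1$ with $\pa h=f/2$'' does not follow from analyticity of $f$ but from integrating (\ref{GradConv}) along discrete paths of uniformly bounded length --- precisely the step the paper writes out.
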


\begin{remark}
In other words, the discrete gradients of $H^\mesh$ defined by the left-hand side of
(\ref{GradConv}) converge to $\nabla h$. Looking at the edge $u_k^{-}u_k^{+}$ one sees only
the discrete directional derivative of $H^\mesh$ along the unit vector
$\t_k:=(u_k^{+}-u_k^{-})/|u_k^{+}-u_k^{-}|$ which converges to $\langle \nabla
h(u),\t_k\rangle = \Re[2\pa h(u)\cdot\t_k]$.
\end{remark}

\begin{proof}
Due to the uniform Lipschitzness of bounded discrete harmonic functions (see Corollary
\ref{CorLipHarm}) and the Arzel\`a-Ascoli Theorem, the sequence $\{H^{\mesh_j}\}$ is
precompact in the uniform topology on any compact subset $K\ss\Omega$. Moreover, their
discrete derivatives (defined for $z\in\O^{\mesh_j}_\DS$)
\[
F^{\mesh_j}(z):=[\pa^{\mesh_j} H^{\mesh_j}](z)=
\frac{H^{\mesh_j}(u_j^{+}(z))-H^{\mesh_j}(u_j^{-}(z))}{u_j^{+}(z)-u_j^{-}(z)},\qquad z\sim
u_j^{\pm}(z)\in \G^{\mesh_j},
\]
are discrete holomorphic and uniformly bounded on any compact subset $K\ss \O$. Then, due to
Proposition \ref{PropLipHol} and the Arzel\`a-Ascoli Theorem, the sequence of averaged
functions $m^{\mesh_j} F^{\mesh_j}$ (defined on $\O^{\mesh_j}_\G$ by (\ref{mMeshDSdef})) is
precompact in the uniform topology on any compact subset of $\Omega$. Thus, for some
subsequence $\mesh_k\to 0$, one has
\[
H^{\mesh_k}\rra h\qquad \mathrm{and} \qquad 2m^{\mesh_k}F^{\mesh_k}\rra f
\]
uniformly on compact subsets of $\O$. Moreover, due to Proposition~\ref{PropLipHol}, it also
gives
\[
\left|F^{\mesh_k}(z) - \Pr\left[f(z);\ol{u_k^{+}(z)-u_k^{-}(z)}\,\right]\right|\rra 0\quad
\mathrm{uniformly\ on\ compact\ subsets\ of}\ \O.
\]
It is easy to see that $h$ is harmonic. Indeed, let $\phi:\O\to\R$ be an arbitrary
$C_0^\iy(\O)$ test function (i.e., $\phi\in C^\iy$ and $\supp\phi\ss\O$). Denote by $h^\mesh$,
$\phi^\mesh$ and $(\D\phi)^\mesh$ the restrictions of $h$, $\phi$ and $\D\phi$ onto the
lattice $\G^\mesh$. The approximation properties ((\ref{DiscrIntApprox}) and
\mbox{Lemma~\ref{ApproxLemma}}) and discrete integration by parts give
\[
\langle h,\D\phi\rangle_\O\ = \lim_{\mesh=\mesh_k\to 0} \sum_{u\in \O^\mesh_\G}
h^\mesh(u)(\D\phi)^\mesh(u)\weightG{u}\ = \lim_{\mesh=\mesh_k\to 0} \sum_{u\in \O^\mesh_\G}
h^\mesh(u)[\D^\mesh\phi^\mesh](u)\weightG{u}
\]
\[
= \lim_{\mesh=\mesh_k\to 0} \sum_{u\in \O^\mesh_\G}
H^\mesh(u)[\D^\mesh\phi^\mesh](u)\weightG{u}\ = \lim_{\mesh=\mesh_k\to 0} \sum_{u\in
\O^\mesh_\G} [\D^\mesh H^\mesh](u)\phi^\mesh(u)\weightG{u}\ =\ 0.
\]
Furthermore, for any path $[u_0u_n]^\mesh=u_0u_1..u_n$, $u_{s+1}\sim u_s$, $u_s\in\G^\mesh$,
one has
\[
H^\mesh(u_n)-H^\mesh(u_0)=\int^\mesh_{[u_0u_n]^\mesh} F^\mesh\!(z)d^\mesh z = \sum_{s=0}^{n-1}
F^\mesh\!\left({\textstyle\frac{1}{2}}(u_{s+1}\!+\!u_s)\right)\cdot (u_{s+1}\!-\!u_s).
\]
Taking appropriate discrete approximations of segments $[uv]\ss\O$ (recall that rhombi angles
are bounded from $0$ and $\pi$, so one may find polyline approximations with uniformly bounded
lengths) and passing to the limit as $\mesh=\mesh_k\to 0$, one obtains
\[
h(v)-h(u)= \int_{[uv]} \Re[f(z)dz] = \Re\left[\int_{[uv]} f(z)dz\right]\quad \mathrm{for\ all\
segments}\ [uv]\ss\O.
\]
It gives $\a h'_x(u)+\b h'_y(u)=\Re[(\a+i\b)f(u)]$ for all $u\in \O$ and $\a,\b\in\R$, so
$f=2\pa h$.
\end{proof}

As an illustration of what directly follows from basic facts collected in
Sect.~\ref{SectBasicFacts}, we give a proof of the most classical convergence result for
solutions of the Dirichlet boundary value problem, when a {\it single} domain $\O\ss\C$
bounded by Jordan curves is approximated by discrete ones, ``growing from inside''. Later, in
Theorem~\ref{ThmDirConvUni}, we will prove the {\it uniform} (w.r.t.~$\O$) version of the same
result for simply connected $\O$'s.

\begin{proposition}
\label{PropDirConv} Let $\O\ss\C$ be a (possibly not simply connected) continuous domain,
bounded by a finite number of closed nonintersecting Jordan curves, $\pa\O=J_1\cup..\cup J_n$,
and $g:J^r\to\R$ be a continuous function defined in some closed \mbox{$r$-neighborhood} $J^r$
of~$\pa\O$. Let a sequence of discrete domains $\O^{\mesh_j}_\G\ss\G^{\mesh_j}$, $\mesh_j\to
0$, approximates $\O$ so that
\[
\O\setminus J^r\ss\O^{\mesh_1}\ss\O^{\mesh_2}\ss\dots \ss \O\quad \mathit{and}\quad
\cup_{j=1}^{+\infty}\O^{\mesh_j}=\O.
\]
Let $H^{\mesh_j}$ denote the discrete harmonic continuation of $g$ from $\pa\O^{\mesh_j}_\G\ss
J^r$ into $\O^{\mesh_j}_\G$ and $h$ be the continuous harmonic continuation of $g$ from $J$
into $\O$. Then,
\[
H^{\mesh_j}\rra h\ \mathit{uniformly\ on\ compact\ subsets}\ K\ss\O,
\]
Moreover, discrete gradients (\ref{GradConv}) of functions $H^{\mesh_j}$ uniformly converge to
$\nabla h$.
\end{proposition}

\begin{proof} Since $J^r$ is compact, $g$ is bounded by some constant $M:=\max_{z\in J^r}|g(z)|$ and uniformly
continuous on $J^r$. Set
\[
\textstyle \nu(\r):=\max_{z,w\in J^r:|z-w|\le\r}|g(z)-g(w)|\to 0\ \ \mathrm{as}\ \ \r\to 0.
\]
By the maximum principle, all $H^{\mesh_j}$ are uniformly bounded in $\O$. Then,
Proposition~\ref{PropUniformComp} allows one to extract a subsequence $H^{\mesh_k}$ which
converges to some harmonic function $H$ (and the gradients of $H^{\mesh_k}$ converge to
$\nabla H$). Thus, it is sufficient to prove that each subsequential limit coincides with $h$,
i.e. to identify the boundary values of $H$.

Let $z=z^{\mesh_k}\in\O^{\mesh_k}_\G\ss\G^{\mesh_k}$, $w\in\pa\O$ be (one of) the closest to
$z$ points on $\pa\O$, and $d:=|z\!-\!w|$. Since $H^\mesh=g$ on $\pa\O^\mesh_\G$, for any
$\mesh=\mesh_k$ and $\rho\ge 2d$, one has
\[
\begin{split}
|H^\mesh(z) - g(w)| & \le \max\left\{|H^\mesh(u)\!-\!g(w)|,\ u\in \pa\O^\mesh_\G\cap
B(z,\r)\right\} \cdot \dhm{\mesh}{z}{\pa\O^\mesh_\G\cap B(z,\r)}{\O^\mesh_\G}
 \cr & \phantom{\le}+
\max\left\{|H^\mesh(u)\!-\!g(w)|,\ u\in \pa\O^\mesh_\G\setminus B(z,\r)\right\}\cdot
\dhm{\mesh}{z}{\pa\O^\mesh_\G\setminus B(z,\r)}{\O^\mesh_\G} \cr & \le
\nu(2\r)+2M\cdot\const\cdot (2d/\r)^\beta,
\end{split}
\]
where we have used $\dist(z;\pa\O^\mesh_\G)\le d\!+\!2\mesh\le 2d$ and the weak Beurling-type
estimate (Proposition~\ref{PropWeakBeurling}) for the second discrete harmonic measure.
Choosing $\r(d)$ so that $\nu(2\r(d))\cdot\r(d)^\beta=d^\beta$ and passing to the limit as
$\mesh\to 0$, we obtain the estimate
\[
|H(z) - g(w)|=O(\nu(2\r(d)))\to 0\ \ \mathrm{as}\ \ d=|z\!-\!w|\to 0.
\]
Thus, boundary values of $H$ coincide with those of $h$, hence $H=h$ in $\O$.
\end{proof}

\subsection{Carath\'eodory topology and uniform \mbox{$\bm{C^1}$-convergence}.}

\label{SectCaraConv}

Below we need some standard concepts of geometric function theory (see \cite{Pom92},
Chapters 1,2).

Let $\O$ be a simply connected domain. A {\it crosscut} $C$ of $\O$ is an open Jordan arc in
$\O$ such that $\ol{C}=C\cup\{a,b\}$ with $a,b\in\pa\O$. A {\it prime end} of $\O$ is an
equivalence class of sequences ({\it null-chains}) $(C_n)$ of prime ends such that
$\ol{C}_n\cap\ol{C}_{n+1}=\emptyset$, $C_n$ separates $C_0$ from $C_{n+1}$ and $\diam C_n\to
0$ as $n\to\iy$ (null chains $(C_n)$, $(\wt{C})_n$ are equivalent iff for all sufficiently
large $m$ there exists $n$ such that $C_m$ separates $\wt{C}_0$ from $\wt{C}_n$ and $\wt{C}_m$
separates $C_0$ from $C_n$).

Let $P(\O)$ denote the set of all prime ends of $\O$ and let $\phi:\O\to{\mathbb D}$ be a
conformal map. Then (see Theorem 2.15 in \cite{Pom92}) $\phi$ induces the natural bijection
between $P(\O)$ and the unit circle ${\mathbb T}=\pa{\mathbb D}$.

Let $u_0\in\C$ be given and $\O_n,\O\ss\C$, be simply connected domains $\ne\C$ with
$u_0\in\O_n,\O$. We say that $\O_n\to\O$ as $n\to\iy$ {\it in the sense of kernel convergence
with respect to $u_0$} iff

(i)\phantom{i} some neighborhood of every $z\in\O$ lies in $\O_n$ for large enough $n$;

(ii) for every $a\in\pa\O$ there exist $a_n\in\pa\O_n$ such that $a_n\to a$ as $n\to\iy$.

\noindent Let $\phi_k:\O_k\to\dD$, $\phi:\O\to\dD$ be the Riemann uniformization maps
normalized at $u_0$ (i.e., $\phi(u_0)=0$ and $\phi'(u_0)>0$). Then (see Theorem 1.8
\cite{Pom92})
\begin{center}
$\O_k\to\O$ w.r.t. $u_0$\qquad$\Lra$\qquad $\phi_k^{-1}\rra\phi^{-1}$ uniformly on compact
subsets of $\dD$.
\end{center}
Using the Koebe distortion theorem (see Section 1.3 in \cite{Pom92}), it is easy to see that

(a) $\phi_k\rra\phi$ as $k\to\iy$ uniformly on compact subsets $K\ss\O$;

(b) for any $\O_1,\O_2$ such that $u_0\in\O_1\ss\O_2\ne\C$, the set of all simply connected
domains $\{\O:\O_1\ss\O\ss\O_2\}$ is compact in the topology of kernel convergence
w.r.t.~$u_0$.

\begin{definition}
Let $\O=(\O;v,..;a,b,..)$ be a simply connected bounded domain with several (possibly none)
marked interior points $v,..\in\Int\O$ and prime ends (boundary points) $a,b,..\in P(\O)$ (we
admit coincident points, say, $a\!=\!b$) and let $u\in\O$. We write
\[
(\O_k;u_k)=(\O_k;u_k,v_k,..;a_k,b_k,..)\ \CaraTo\ (\O;u)=(\O;u,v,..;a,b,..)\ \ \mathrm{as}\
k\to\iy,
\]
iff the domains $\O_k$ are uniformly bounded, $u_k\to u$, $\O_k\to\O$ in the sense of kernel convergence
w.r.t.~$u$ and $\phi_k(v_k)\to\phi(v),..$, $\phi_k(a_k)\to\phi(a),..$, where
$\phi_k:\O_k\to{\mathbb D}$, $\phi:\O\to{\mathbb D}$ are the Riemann uniformization maps
normalized at $u$.
\end{definition}

\begin{remark}
Since $v\in\O$, one has $|\phi(v)|<1$. Thus, $\phi_k(v_k)\to\phi(v)$ implies $v_k\to v$.
Moreover, one can equivalently use the point $v$ instead of $u$ in the definition given above.
\end{remark}

\begin{definition}
Let $\O$ be a simply connected bounded domain, $u,v,..\in\O$ and \mbox{$r>0$}. We say that the
inner points $u,v,..$ are {\bf jointly \mbox{$\bm{r}$-inside}~$\bm{\O}$} iff
$B(u,r),B(v,r),..\ss\O$ and there are paths $L_{uv},..$ connecting these points
\mbox{$r$-inside}~$\O$ (i.e., $\dist(L_{uv},\pa\O),..\ge r$). In other words, $u,v,..$ belong
to the same connected component of the \mbox{$r$-interior} of $\O$.
\end{definition}

Note that for each $0<r<R$ there exists some $C(r,R)$ such that, if $\O\ss B(0,R)$ and
$u,v,..$ are jointly \mbox{$r$-inside}~$\O$, then
\begin{equation}
\label{phi(v)Bound} |\phi(v)|,..\le C(r,R)<1,
\end{equation}
where $\phi:\O\to\dD$ is the Riemann uniformization map normalized at $u$. Indeed, considering
the standard plane metric, one concludes that the extremal distance (see, e.g., Chapter IV in
\cite{GM05}) from $L_{uv}$ to $\pa\O$ in $\O\setminus L_{uv}$ is not less than $r/\pi R^2$.
Thus, the conformal modulus of the annulus $\dD\setminus \phi(L_{uv})$ is bounded below by
some $\const(r,R)>0$. Since $\phi(u)=0$, (\ref{phi(v)Bound}) holds true.

Now we formulate a general framework for Theorems~\ref{ThmDirConvUni}--\ref{ThmDPoConv}.
Suppose that some harmonic function (e.g., harmonic measure, Green's function, Poisson kernel
etc.)
\[
h(\ccdot;\O)=h(\ccdot,v,..;a,b,..;\O):\O\to\R
\]
is associated with each (continuous) domain $\O=(\O;v,..;a,b,..)$.

Similarly, let $\O^\mesh_\G=(\O^\mesh_\G;v^\mesh,..;a^\mesh,b^\mesh,..)$ denote a simply
connected bounded discrete domain with several marked vertices $v^\mesh,..\in\Int\O^\mesh_\G$
and $a^\mesh,b^\mesh,..\in\pa\O^\mesh_\G$ and
\[
H^\mesh(\ccdot;\O^\mesh_\G)=
H^\mesh(\ccdot,v^\mesh,..;a^\mesh,b^\mesh,..;\O^\mesh_\G):\O^\mesh_\G\to\R
\]
be some discrete harmonic function associated with this configuration. The idea of
Proposition~\ref{PropUniC1} is to use the compactness argument again, now for the set of all
simply-connected domains. Recall that $\O^\mesh\ss\C$ denotes the polygonal representation of
$\O^\mesh_\G$.

\begin{definition}
\label{DefUniformC1conv} We say that {\bf $\bm{H^\mesh}$ are uniformly
\mbox{$\bm{C^1}$\!-close} to $\bm{h}$ inside~$\bm{\O^\mesh}$}, iff for all $0\!<\!r\!<\!R$
there exist $\ve(\mesh)=\ve(\mesh,r,R),\ \wt{\ve}(\mesh)=\wt{\ve}(\mesh,r,R)\to 0$ as
$\mesh\to 0$ such that for all discrete domains
$\O^\mesh_\G=(\O^\mesh_\G;v^\mesh,..;a^\mesh,b^\mesh,..)$ and $u^\mesh\in\Int\O^\mesh_\G$ the
following holds true:
\begin{quotation}
\noindent If $\O^\mesh\ss B(0,R)$ and $u^\mesh,v^\mesh,..$ are jointly
\mbox{$r$-inside}~$\O^\mesh$, then
\begin{equation}
\label{HUniformClose} \left|H^\mesh(u^\mesh,v^\mesh,..;a^\mesh,b^\mesh,..;\O^\mesh_\G)\,-\,
h(u^\mesh,v^\mesh,..;a^\mesh,b^\mesh,..;\O^\mesh)\right|\le \ve(\mesh)
\end{equation}
and, for all $u^\mesh_s\sim u^\mesh$, $u^\mesh_s\in \O^\mesh_{\G}$,
\begin{equation}
\label{HGradUnifromClose}
\left|\frac{H^\mesh(u^\mesh_s;\O^\mesh_\G)-H^\mesh(u^\mesh;\O^\mesh_\G)}{|u^\mesh_s-u^\mesh|}
\,-\, \Re\lt[2\pa h(u^\mesh;\O^\mesh)\cdot
\frac{u^\mesh_s-u^\mesh}{|u^\mesh_s-u^\mesh|}\rt]\right|\le \wt{\ve}(\mesh),
\end{equation}
where $2\pa h = h'_x-ih'_y$.
\end{quotation}
\end{definition}

\begin{proposition}
\label{PropUniC1} Let (a) $H^\mesh\to h$ ``pointwise'' as $\mesh\to 0$, i.e.,
\begin{equation}
\label{AssumptA_PWConv} H^\mesh(u^\mesh;\O^\mesh_\G)\to h(u;\O),\ \ \mathit{if}\ \
(\O^\mesh;u^\mesh)\ \CaraTo\ (\O;u)\quad \mathit{as}\ \ \mesh\to 0;
\end{equation}
and (b) $h$ be Carath\'eodory-stable, i.e.,
\begin{equation}
\label{AssumptB_CaraStab} h(u_k;\O_k)\to h(u;\O),\ \ \mathit{if}\ \ (\O_k;u_k)\ \CaraTo\
(\O;u)\quad \mathit{as}\ \ k\to\iy.
\end{equation}
Then functions $H^\mesh$ are uniformly \mbox{$C^1$\!-close} to $h$ inside $\O^\mesh$ (see
Definition \ref{DefUniformC1conv}).
\end{proposition}

\begin{remark}
\label{RemA=>B} Typically, if one is able to prove (a) using the ``toolbox'' developed in
Sect.~\ref{SectBasicFacts}, then the same reasoning applied in the continuous setup would lead
to (b), since all these tools are just discrete versions of classical facts from complex
analysis.
\end{remark}

\begin{proof}
Suppose (\ref{HUniformClose}) does not hold true, i.e.,
\[
\left|H^{\mesh}(u^{\mesh};\O^{\mesh}_\G)-h(u^{\mesh};\O^{\mesh})\right|\ge \ve_0>0
\]
for some sequence $(\O^{\mesh}_{\G};u^{\mesh})$, $\mesh=\mesh_j\to 0$, such that
$B(u^{\mesh},r)\ss\O^{\mesh}\ss B(0,R)$. Taking a subsequence, one may assume that
$u^{\mesh}\to u$ for some $u\in B(0,R)$. The set of all simply connected domains
$\O:B(u,\frac{1}{2}r)\ss\O\ss B(0,R)$ is compact in the Carath\'eodory topology. Thus, taking
a subsequence again, one may assume that
\[
(\O^{\mesh};u^{\mesh},v^{\mesh},..;a^{\mesh},b^{\mesh},..)\ \CaraTo\ (\O;u,v,..;a,b,..)\quad
\mathrm{as}\ \ \mesh=\mesh_k\to 0
\]
(note that the marked points $v^{\mesh},..$ cannot reach the boundary due to
(\ref{phi(v)Bound})). Then, (a)~the pointwise convergence $H^\mesh\to h$ and (b)~the
Carath\'eodory-stability of $h$ easily give a contradiction. Indeed, both
\[
\mathrm{(a)}\ H^{\mesh}(u^{\mesh};\O^{\mesh}_\G)\to h(u;\O)\quad \mathrm{and}\quad
\mathrm{(b)}\ h(u^{\mesh};\O^{\mesh})\to h(u;\O)\quad \mathrm{as}\ \ \mesh=\mesh_k\to 0.
\]

In view of Proposition~\ref{PropUniformComp}, the proof for discrete gradients goes by the
same way. Assume (\ref{HGradUnifromClose}) does not hold for some sequence of discrete
domains. As above, one may take a subsequence $\mesh=\mesh_k$ such that
$(\O^{\mesh};u^{\mesh})\CaraTo (\O,u)$. Note that (b) directly implies
\[
h(\ccdot;\O^{\mesh})\rra h(\ccdot;\O)\quad \mathrm{uniformly\ on}\ \ \ol{B}(u,
{\textstyle\frac{1}{2}}r)\quad \mathrm{as}\ \ \mesh=\mesh_k\to 0.
\]
Indeed, $(\O^{\mesh};\wt{u})\CaraTo(\O;\wt{u})$ for all $\wt{u}\in\ol{B}(u,\frac{1}{2}r)$. If
$|h(\wt{u}^\mesh;\O^{\mesh})-h(\wt{u}^\mesh;\O)|\ge\ve_0>0$ for some $\wt{u}^\mesh$ and all
$\mesh=\mesh_k$, then, taking a subsequence $\mesh=\mesh_m$ so that $\wt{u}^{\mesh}\to
\wt{u}\in\ol{B}(u,\frac{1}{2}r)$, one obtains a contradiction with
$h(\wt{u}^{\mesh};\O^{\mesh})\to h(\wt{u};\O)$ and $h(\wt{u}^{\mesh};\O)\to h(\wt{u};\O)$.

The uniform estimate $|H^{\mesh}(\ccdot;\O^{\mesh}_\G)-h(\ccdot;\O^{\mesh})|\le\ve(\mesh)\to
0$ (see above) gives
\[
H^{\mesh}(\ccdot;\O^{\mesh}_\G)\rra h(\ccdot;\O)\quad \mathrm{uniformly\ on}\ \ \ol{B}(u,
{\textstyle\frac{1}{2}}r)\quad \mathrm{as}\ \ \mesh=\mesh_k\to 0.
\]
In particular, all $H^{\mesh_k}(\ccdot;\O^{\mesh_k}_\G)$ are uniformly bounded in
$\ol{B}(u,\frac{1}{2}r)$. Thus, using Proposition~\ref{PropUniformComp}, one can find a
subsequence $\mesh=\mesh_m$ such that the discrete derivatives of $H^{\mesh_m}$ converge (as
defined by (\ref{HGradUnifromClose})) to $f=2\pa h(\ccdot;\O)$ which gives a contradiction.
\end{proof}

\subsection{Basic uniform convergence theorems}
\label{SectMainConvThm}

We start with a uniform (w.r.t.~$\O$) version (Theorem~\ref{ThmDirConvUni}) of
Proposition~\ref{PropDirConv} for simply-connected domains. It immediately gives the uniform
convergence for the discrete Green's functions (Corollary~\ref{CorGreenConv}). Then, we prove
very similar Theorem~\ref{ThmDhmConv} devoted to the discrete harmonic measure of boundary
arcs. The last result, Theorem~\ref{ThmDPwConv} devoted to the discrete Poisson kernel
$P^\mesh(\ccdot;v^\mesh;a^\mesh;\O^\mesh_\G)$ (see (\ref{DefDiscrPoisson})), needs more
technicalities, essentially because of the unboundedness of $P^\mesh$ near $a^\mesh$.

Let $g:\ol{B}(0,R)\to\R$ be a continuous function. Then, for a simply connected domain $\O\ss
B(0,R)$, let $h_g(\ccdot;\O):\O\to\R$ denote a unique solution of the Dirichlet boundary value
problem
\[
\Delta h_g(\ccdot;\O)=0\ \ \mathrm{inside}\ \O,\quad h_g(\ccdot;\O)=g\ \ \mathrm{on}\ \pa\O
\]
This is the classical result that the solution $h_g$ exists for any simply connected $\O$
(see, e.g., \S{}III.5, \S{}III.6 and Corollary~6.2 in \cite{GM05}). Note that this also
follows from the proof of Theorem~\ref{ThmDirConvUni}, where $h_g$ naturally appears as a
limit of discrete approximations.

Similarly, for a discrete simply-connected domain $\O^\mesh$, let
$H^\mesh_g=H^\mesh_g(\ccdot;\O^\mesh_\G)$ be a unique solution of the discrete Dirichlet
problem
\[
\Delta^\mesh H^\mesh_g=0\ \ \mathrm{in}\ \ \O^\mesh_\G,\qquad H^\mesh_g=g\ \ \mathrm{on}\ \
\pa\O^\mesh_\G.
\]

\begin{theorem}
\label{ThmDirConvUni} For any continuous $g:\ol{B}(0,R)\to\R$, the functions $H^\mesh_g$ are
uniformly \mbox{$C^1$\!-close} inside $\O^\mesh$ (in the sense of
Definition~\ref{DefUniformC1conv}) to $h^\mesh_g$. Moreover, the estimates
(\ref{HUniformClose}) and (\ref{HGradUnifromClose}) are also uniform in
\[
\textstyle g\in\cG_R(M,\n):=\{g:\max_{|z|\in \ol{B}(0,R)}|g(z)|\le M,\
\max_{|z-w|\le\r}|g(z)-g(w)|\le\n(\r)\},
\]
if both $M<+\infty$ and the modulus of continuity $\nu(\r)\to 0$ as $\r\to 0$ are fixed. In
other words, there exist $\ve(\mesh),\ \wt{\ve}(\mesh)\to 0$ as $\mesh\to 0$ (which may depend
on $r,R,M,\nu$) such that (\ref{HUniformClose}), (\ref{HGradUnifromClose}) are fulfilled for
any $g\in\cG_R(M,\nu)$ and any simply connected $\O\ss B(0,R)$.
\end{theorem}

\begin{proof}
Let $g$ be fixed. It is sufficient to verify both assumptions (a) and (b) in
Proposition~\ref{PropUniC1}. In fact, (a) was already essentially verified in the proof of
Proposition~\ref{PropDirConv}.

Indeed, $H^\mesh_g$ are uniformly bounded in $\O$ by a constant $M$, and so
Proposition~\ref{PropUniformComp} allows one to extract a convergent subsequence
$H^{\mesh_k}_g\rra H$. Thus, it is sufficient to prove that each subsequential limit $H$
coincides with $g$ on $\pa\O$.

Let $z=z^{\mesh_k}\in\O^{\mesh_k}_\G\ss\G^{\mesh_k}$, $w\in\pa\O$ be (one of) the closest to
$z$ points on $\pa\O$, and $d:=|z\!-\!w|$. Due to the geometric description of the kernel
convergence, there is a sequence of points $w^\mesh\in\pa\O^\mesh$ approximating $w$ as
$\mesh\to 0$. Thus, one still has $\dist(z;\pa\O^\mesh_\G)\le 2d$ for $\mesh$ small enough,
and the proof finishes exactly as before.

As it was pointed out in Remark~\ref{RemA=>B}, the Carath\'eodory stability of $h_g$ follows
from the same reasonings applied in the continuous setup. Namely, one can always find a
subsequence of the uniformly bounded harmonic functions $h_g(\ccdot;\O_k)$ uniformly
converging on compact subsets of $\O$ together with their gradients. Then, exactly as above,
the classical Beurling estimate implies that $h=g$ on $\pa\O$, and so each subsequential limit
coincides with $h_g(\ccdot;\O)$

Finally, for $g\in\cG_R(M,\n)$, let $\ve(\mesh;g)$ and $\wt{\ve}(\mesh;g)$ denote the best
possible bounds in (\ref{HUniformClose}) and (\ref{HGradUnifromClose}), respectively. Due to
the (both, discrete and continuous) maximum principles and the Harnack inequalities for
harmonic functions, one sees that
\[
|\ve(\mesh;g_1)-\ve(\mesh;g_2)|\le 2\|g_1\!-\!g_2\|_C\quad \mathrm{and}\quad
|\wt{\ve}(\mesh;g_1)-\wt{\ve}(\mesh;g_2)|\le\const\cdot \frac{\|g_1\!-\!g_2\|_C}{r},
\]
where $\|g\|_C:=\max_{z\in \ol{B}(0,R)}|g(z)|$ is the standard sup-norm in the space
$C(\ol{B}(0,R))$. Thus, $\ve(\mesh;\ccdot)$ and $\wt{\ve}(\mesh;\ccdot)$ are uniformly (in
$\mesh$) continuous (as functions of $g$) on the set $\cG_R(M,\n)$. Since
$\ve(\mesh;g),\wt{\ve}(\mesh;g)\to 0$ for any fixed $g\in\cG_R(M,\n)$, this implies
\[
\max_{g\in\cG_R(M,\n)}\ve_g(\mesh),\max_{g\in\cG_R(M,\n)}\wt{\ve}_g(\mesh)\to 0\ \
\mathrm{as}\ \ \mesh\to 0,
\]
due to the compactness of the set $\cG_R(M,\n)\ss C(\ol{B}(0,R))$.
\end{proof}

Let $\O^\mesh_\G$ be some bounded simply connected discrete domain. Recall that the discrete
Green's function $G_{\O^\mesh_\G}(\ccdot;v^\mesh)$, $v^\mesh\in\Int\O^\mesh_\G$ can be written
as
\[
G_{\Omega^\mesh_\G}^{\ }(\ccdot;v^\mesh)= G_\G(\ccdot;v^\mesh)-
G_{\Omega^\mesh_\G}^*(\ccdot;v^\mesh),
\]
where $G_{\Omega^\mesh_\G}^*=G_{\Omega^\mesh_\G}^*(\ccdot;v^\mesh):\O^\mesh_\G\to\R$ is a
solution of the discrete Dirichlet problem
\[
\D^\mesh G_{\Omega^\mesh_\G}^*=0\ \ \mathrm{in}\ \ \O^\mesh_\G,\qquad
G_{\Omega^\mesh_\G}^*=G_\G\ \ \mathrm{on}\ \ \pa\O^\mesh_\G.
\]
Theorem~\ref{ThmKenyonHarm} claims uniform \mbox{$C^1$-convergence} of the free Green's
function $G_\G$ to its continuous counterpart $G_\C(u;v):=\frac{1}{2\pi}\,\log|u\!-\!v|$ with
an error $O(\mesh^2|u\!-\!v|^{-2})$ for the functions and so $O(\mesh|u\!-\!v|^{-2})$ for the
gradients. Let $G^*_\O=G^*_\O(\ccdot;v):\O\to\R$ denote a solution of the corresponding
continous Dirichlet problem
\[
\D G^*_\O=0\ \ \mathrm{in}\ \ \O,\qquad G^*_\O=G_\C\ \ \mathrm{on}\ \ \pa\O.
\]

\begin{corollary}
\label{CorGreenConv} The discrete harmonic functions $G_{\Omega^\mesh_\G}^*(\ccdot;v^\mesh)$
are uniformly \mbox{$C^1$\!-close} inside $\O^\mesh$ (in the sense of
Definition~\ref{DefUniformC1conv}) to their continuous counterparts
$G_{\O^\mesh}^*(\ccdot;v^\mesh)$.
\end{corollary}

\begin{proof}
Let $g(u;v):=\frac{1}{2\pi}\max\{\log|u-v|,\log r\}$. Note that all the functions
$g(\ccdot;v)$ are uniformly bounded and equicontinuous. Let
$\D^\mesh\wt{G}^*_{\O^\mesh_\G}=\wt{G}^*_{\O^\mesh_\G}(\ccdot;v^\mesh)$ denote a solution of
the discrete Dirichlet problem
\[
\D^\mesh\wt{G}^*_{\O^\mesh_\G}=0\ \ \mathrm{in}\ \ \O^\mesh_\G,\qquad
\wt{G}^*_{\O^\mesh_\G}=g=G_\C\ \ \mathrm{on}\ \ \pa\O.
\]
Due to Theorem~\ref{ThmDirConvUni}, the functions $\wt{G}^*_{\O^\mesh_\G}$ are uniformly
\mbox{$C^1$\!-close} to $G_{\O^\mesh}^*$ inside $\O^\mesh$. On~the other hand, since
$B(v^\mesh,r)\ss\O^\mesh$, one has
\[
|G^*_{\O^\mesh_\G}-\wt{G}^*_{\O^\mesh_\G}|\le \const\cdot\mesh^2/r^2\ \mathrm{on}\
\pa\O^\mesh_\G.
\]
Then, the maximum principle and the discrete Harnack estimate (Corollary~\ref{CorLipHarm})
guarantees that $G^*_{\O^\mesh_\G}$ are uniformly \mbox{$C^1$\!-close} to
$\wt{G}^*_{\O^\mesh_\G}$ inside $\O^\mesh$.
\end{proof}

\begin{theorem}
\label{ThmDhmConv} The discrete harmonic measures $\dhm{\mesh}{\ccdot}{b^\mesh
a^\mesh}{\O^\mesh_\G}$ are uniformly \mbox{$C^1$\!-close} inside $\O^\mesh$ (in the sense of
Definition \ref{DefUniformC1conv}) to their continuous counterparts $\hm{\ccdot}{b^\mesh
a^\mesh}{\O^\mesh}$.
\end{theorem}

\begin{proof}
By conformal invariance, the continuous harmonic measure is Carath\'eodory stable, so the
second assumption in Proposition~\ref{PropUniC1} holds true. Thus, it is sufficient to prove
pointwise convergence (\ref{AssumptA_PWConv}) (see also Remark~\ref{RemA=>B}).

Let $(\O^\mesh;u^\mesh;a^\mesh,b^\mesh)\CaraTo (\O;u;a,b)$. The functions
$0\le\dhm{\mesh}{\ccdot}{b^\mesh a^\mesh}{\O^\mesh_\G}\le 1$ are uniformly bounded in $\O$.
Due to Proposition~\ref{PropUniformComp}, one can find a subsequence $\mesh_k\to 0$ such that
\[
\dhm{\mesh_k}{\ccdot}{b^{\mesh_k}a^{\mesh_k}}{\O^{\mesh_k}_\G}\rightrightarrows H
\]
uniformly on compact subsets of $\O$, where $H:\O\to\R$ is some harmonic function. It is
sufficient to prove that $H(u)=\hm{u}{ba}{\O}$ for each subsequential limit.

Let $z^\mesh\in\Int\O^\mesh_\G$. The weak Beurling-type estimate (see Proposition
\ref{PropWeakBeurling}) gives
\[
0\le \dhm{\mesh}{z^\mesh}{b^{\mesh}a^{\mesh}}{\O^{\mesh}_\G} \le \const \cdot
\lt[\frac{\dist(z^\mesh;\pa\O^\mesh_\G)}{\dist_{\O^\mesh_\G}(z^\mesh;b^{\mesh}a^{\mesh})}\rt]^\b
\]
uniformly as $\mesh\to 0$. Passing to the limit as $\mesh=\mesh_k\to 0$, one obtains
\[
0\le H(z) \le \const \cdot \lt[\frac{\dist(z;\pa\O)}{\dist_\O(z;ba)}\rt]^\b\quad \mathrm{for\
all}\ z\in\Int\O.
\]
Therefore, $H\equiv 0$ on the boundary arc $ab\ss P(\O)$. Similar arguments give $H\equiv 1$
on the arc $ba\ss P(\O)$. Hence, $H=\hm{\ccdot}{ba}{\O}$ and, in particular,
$H(u)=\hm{u}{ba}{\O}$.
\end{proof}

Let $\O^\mesh_\G$ be a simply connected discrete domain, $a^\mesh\!\in\pa\O^\mesh_\G$ and
$v^\mesh\!\in\Int\O^\mesh_\G$. We call \[
P^\mesh=P^\mesh(\ccdot;v^\mesh;a^\mesh;\O^\mesh_\G):\O^\mesh_\G\to\R
\]
the {\bf discrete Poisson kernel normalized at $\bm{v^\mesh}$}, if
\[
\D^\mesh P^\mesh=0\ \ \mathrm{in}\ \ \O^\mesh_\G,\quad P^\mesh=0\ \ \mathrm{on}\ \
\pa\O^\mesh_\G\setminus\{a^\mesh\},\quad \mathrm{and}\quad P^\mesh(v^\mesh)=1.
\]
Note that the function $P^\mesh$ is uniquely defined by these conditions (see
(\ref{DefDiscrPoisson})) and $P^\mesh\ge 0$.

In the continuous setup, let $\O$ be a simply connected domain, $a\in P(\O)$ be some prime end
and $v\in\Int\O$. Let $P=P(\ccdot;v;a;\O)$ denote a solution of the boundary value problem
\[
\D P=0\ \ \mathrm{in}\ \ \O,\quad P=0\ \ \mathrm{on}\ \ \pa\O\setminus\{a\},\quad P\ge 0,\quad
\mathrm{and}\quad P(v)=1
\]
(note that $P$ is uniquely defined by these conditions for any simply connected domain $\O$ as
the conformal image of the standard Poisson kernel defined in the unit disc ${\mathbb D}$).

\begin{theorem}
\label{ThmDPwConv} The discrete Poisson kernels $P^\mesh(\ccdot;v^\mesh;a^\mesh;\O^\mesh_\G)$
are uniformly \mbox{$C^1$\!-close} inside $\O^\mesh$ (in the sense of Definition
\ref{DefUniformC1conv}) to their continuous counterparts $P(\cdot;v;a^\mesh;\O^\mesh)$.
\end{theorem}

\begin{proof}
The continuous Poisson kernel $P(\ccdot;v,a,\O)$ is Carath\'eodory stable due to its conformal
invariant definition, so (\ref{AssumptB_CaraStab}) holds true. Thus, it is sufficient to prove
pointwise convergence (\ref{AssumptA_PWConv}) (see also Remark~\ref{RemA=>B}).

Let $(\O^\mesh;u^\mesh,v^\mesh;a^\mesh)\CaraTo (\O;u,v;a)$. Recall that $v^\mesh\to v$ and
$B(v,r)\ss\O^\mesh$ for some $r>0$, if $\mesh$ is small enough. It follows from $P(v^\mesh)=1$
and the discrete Harnack Lemma (Proposition~\ref{PropHarnack} (ii)) that $P^\mesh$ are
uniformly bounded on each compact subset of $\O$. Then, due to Proposition
\ref{PropUniformComp}, one can find a subsequence $\mesh_k\to 0$ such that
\[
P^{\mesh_k}(\ccdot;v^{\mesh_k};a^{\mesh_k};\O^{\mesh_k}_\G)\rightrightarrows H
\]
uniformly on compact subsets of $\O$, where $H\ge 0$ is some harmonic function in $\O$. It is
sufficient to prove that $H(u)=P(u;v;a;\O)$ for each subsequential limit $H$.

\begin{figure}
\centering{\mbox{\includegraphics[height=0.35\textheight]{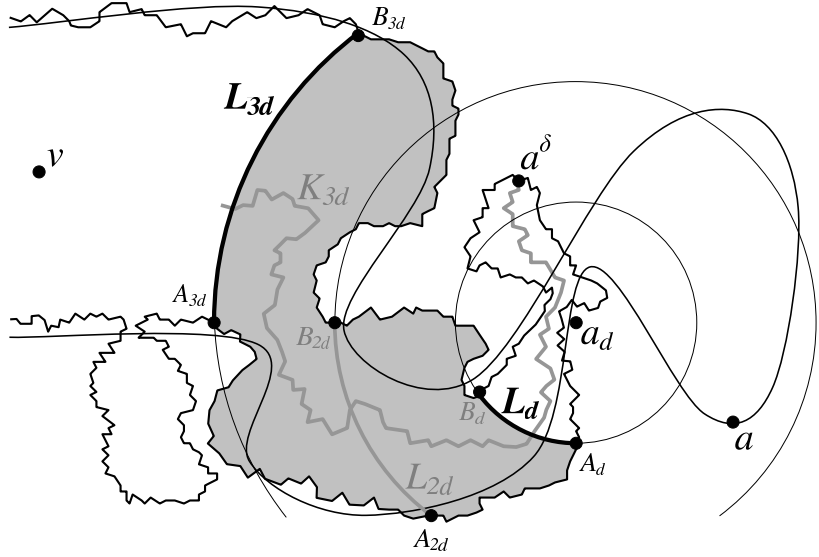}}}
\caption{\label{Fig:KLa} Parts of the continuous domain $\O$ and some discrete domain
$\O^\mesh$ close to $\O$ in the Carath\'eodory topology. Marked boundary points
\mbox{$a^\mesh\in\pa\O^\mesh$} and $a\in P(\O)$ are close to each other in this topology since
the corresponding small cross-cuts near $a_d$ are close. $L_d$ denotes the closest to $v$ arc
in $\{z:|z-a_d|=d\}\cap\O^\mesh$ which separates $v$ and $a^\mesh$. The~quadrilateral
$R_d^{3d}$ is shaded.}
\end{figure}

Let $d>0$ be small enough. Then, there exists a crosscut $\g^a_d\ss B(a_d,\frac{1}{2}d)$ in
$\O$ separating $a$ and $u,v$ (see Fig.~\ref{Fig:KLa}). Moreover, one may assume that
$u,v\notin B(a_d,4d)$ and $u,v$ belong to the same component of $\O\setminus\ol{B(a_d,4d)}$.
For sufficiently small $\mesh$ let
\[
L_d\ss \{z:|z\!-\!a_d|=d\}\cap\O^\mesh\]
be an arc separating $v^\mesh$ and $a^\mesh$ in $\O^\mesh$
(we take the arc closest to $v^\mesh$, see Fig.~\ref{Fig:KLa}). Let $\O^\mesh_d$ denote the
connected component of $\O^\mesh\setminus L_d$ containing $v^\mesh$. Since $\O^\mesh\CaraTo\O$
w.r.t. $v$ and $v^\mesh\to v$, one has
\begin{equation}
\label{xHmLd} \hm{v^\mesh}{L_d}{\O^\mesh_d}\ge \const(d)>0
\end{equation}
(here and below constants $\const(d)$ do not depend on $\mesh$). Similarly, let
$\O^\mesh_{3d}$ be the connected component of $\O^\mesh\setminus L_{3d}$ containing $u,v$.
Denote
\[
M^\mesh_{3d}=\max\{P^\mesh(z^\mesh),\ z^\mesh\in \O^\mesh_{3d}\cap\G^\mesh\}.
\]
Since the function $P^\mesh$ is discrete harmonic, one has
\[
M^\mesh_{3d}= P^\mesh(z^\mesh_0)\le P^\mesh(z^\mesh_1)\le P^\mesh(z^\mesh_2)\le..
\]
for some nearest-neighbor path $K_{3d}^\mesh=\{z^\mesh_0\sim z^\mesh_1\sim z^\mesh_2\sim ..,\
z^\mesh_s\in\O^\mesh_\G\}$, starting at some $z^\mesh_0\in \O^\mesh_{3d}$. Since
$P^\mesh|_{\pa\O^\mesh_\G\setminus\{a^\mesh\}}=0$, the unique possibility for this path to end
is $a^\mesh$.

Using (\ref{xHmLd}), it is not hard to conclude (see Lemma~\ref{xLemmaHm} below) that the
following holds true for the {\it continuous} harmonic measures:
\[
\hm{v^\mesh}{K_{3d}}{\O^\mesh\setminus K_{3d}}\ge \hm
{v^\mesh}{K_{3d}\cap\O^\mesh_d}{\O^\mesh_d\setminus K_{3d}} \ge
\const\cdot\,\hm{v^\mesh}{L_d}{\O^\mesh_d}\ge \const(d),
\]
where $K_{3d}$ is the corresponding polyline starting at $z_0^\mesh$ and ending at $a^\mesh$.
Applying Theorem~\ref{ThmDhmConv} with $\ve=\frac{1}{2}\const(d)$, one obtains the same
inequality
\[
\dhm{\mesh}{v^\mesh}{K^\mesh_{3d}}{\O^\mesh_\G\setminus K^\mesh_{3d}}\ge \const(d)>0
\]
for {\it discrete} harmonic measures {\it uniformly} as $\mesh\to 0$ (with smaller
$\const(d)$). Recall that $P^\mesh(v^\mesh)=1$ by definition and $P^\mesh(v)\ge M^\mesh_{3d}$
along the path $M^\mesh_{3d}$. Thus,
\[
M^\mesh_{3d}\le \const(d),\quad \mathrm{if}\ \mesh\ \mathrm{is\ small\ enough}.
\]
Finally, let $z^\mesh\in \G^\mesh\cap\O^\mesh_{3d}$ be such that $|z^\mesh\!-\!a_d|>3d$. The
weak Beurling-type estimate immediately gives
\[
P^\mesh(z^\mesh)\le \const \cdot
\lt[\frac{\dist(z^\mesh;\pa\O^\mesh)}{\dist(z^\mesh;L_{3d})}\rt]^\b \cdot M^\mesh_{3d}\le
\const(d)\cdot \lt[\frac{\dist(z^\mesh;\pa\O^\mesh)}{|z^\mesh\!-\!a_d|-3d}\rt]^\b.
\]
Passing to the limit as $\mesh=\mesh_k\to 0$, one obtains
\[
H(z) \le \const(d)\cdot \lt[\frac{\dist(z;\pa\O)}{|z\!-\!a_d|-3d}\rt]^\b\quad \mathrm{for\
all}\ z\in\O_{3d}\ \ \mathrm{such\ that}\ \ |z\!-\!a_d|>3d.
\]
Thus, $H\!=\!0$ on $\pa\O\setminus \{a\}$. Since $H\ge 0$ and $H(v)\!=\!1$, this gives
$H\!=\!P(\ccdot;v;a;\O)$.
\end{proof}

\begin{lemma}
\label{xLemmaHm} Let $\O\ss\C$ be some simply connected domain, $v\in\Int\O$ and $a\in P(\O)$.
Let $L_d\ss\{z:|z\!-\!a_d|=d\}\cap\O$ be the arc separating $v$ and $a$ that is
 closest to $v$, and
$\O_d$ be the connected component of $\O\setminus L_d$ containing $v$. Let $K_{3d}$ be some
path connecting $L_{3d}$ and $L_d$ inside the conformal quadrilateral
$R_d^{3d}=\O_d\setminus\ol{\O_{3d}}$ (see Fig.~\ref{Fig:KLa}). Then
\[
\hm{v}{K_{3d}}{\O_d\setminus K_{3d}} \ge \const\cdot\,\hm{v}{L_d}{\O_d}
\]
for some absolute positive constant.
\end{lemma}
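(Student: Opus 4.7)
The plan is to combine the strong Markov property of Brownian motion with a conformal uniformization of $\O_d\setminus K_{3d}$ and an extremal-length estimate in the quadrilateral $R_d^{3d}$.

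First, I would apply the strong Markov property at the first hitting time $\sigma$ of $K_{3d}$ by Brownian motion started at $v$, and bound $\o(z;L_d;\O_d)\le 1$ on $K_{3d}$, to obtain
\[
\hm{v}{L_d}{\O_d}\ \le\ \hm{v}{L_d}{\O_d\setminus K_{3d}} + \hm{v}{K_{3d}}{\O_d\setminus K_{3d}}.
\]
Hence it suffices to show $\hm{v}{K_{3d}}{\O_d\setminus K_{3d}}\ge c'\cdot\hm{v}{L_d}{\O_d\setminus K_{3d}}$ for some absolute $c'>0$; the lemma then follows with the absolute constant $c'/(1+c')$.

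Next, I would apply the Riemann uniformization $\phi:(\O_d\setminus K_{3d},v)\to(\dD,0)$. Since $L_d\cup K_{3d}$ is a connected arc on $\pa(\O_d\setminus K_{3d})$ (both contain $p:=K_{3d}\cap L_d\in\pa\O_d$), its image is a connected sub-arc of $\pa\dD$ decomposing, in cyclic order, as $I^+, J^+, \{\phi(q)\}, J^-, I^-$, where $q:=K_{3d}\cap L_{3d}$ is the slit tip, $I^\pm=\phi(L_d^\pm)$ are the images of the two parts of $L_d$ near $p$, and $J^\pm=\phi(K_{3d}^\pm)$ are the images of the two sides of the slit. Conformal invariance of harmonic measure identifies
\[
\hm{v}{L_d}{\O_d\setminus K_{3d}}=\frac{|I^+|+|I^-|}{2\pi},\qquad \hm{v}{K_{3d}}{\O_d\setminus K_{3d}}=\frac{|J^+|+|J^-|}{2\pi},
\]
reducing the task to the arc-length comparison $|J^+|+|J^-|\ge c'(|I^+|+|I^-|)$ on $\pa\dD$.

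For this comparison I would use an extremal-length estimate. The admissible metric $\rho(z):=|z-a_d|^{-1}$ on $R_d^{3d}\ss\{d<|z-a_d|<3d\}$ gives every curve from $L_d$ to $L_{3d}$ a $\rho$-length $\ge\log 3$, while $\iint_{R_d^{3d}}\rho^2\le 2\pi\log 3$, so the extremal length from $L_d$ to $L_{3d}$ in $R_d^{3d}$ is at least $\log 3/(2\pi)$. Being a conformal invariant, this bound is preserved by $\phi$ and constrains the image $\tilde L=\phi(L_{3d})$ to stay at a definite conformal distance from the arc $I^+\cup J^+\cup\{\phi(q)\}\cup J^-\cup I^-$ in $\ol\dD$. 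Slitting $R_d^{3d}$ along $K_{3d}$ produces two sub-quadrilaterals $R^\pm$ each having $K_{3d}$ on one side and part of $L_d$ on the adjacent side; the extremal-length lower bound inherited by each $R^\pm$, together with a Beurling-type projection argument near the common endpoint $p^\pm\in\pa\dD$, yields $|J^\pm|\ge c'|I^\pm|$.

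The main obstacle is this third step: translating the conformal-invariant extremal-length bound into the pointwise arc comparison $|J^\pm|\ge c'|I^\pm|$ on $\pa\dD$ requires careful control of the conformal geometry near each endpoint $p^\pm$. One must verify that neither sub-quadrilateral $R^\pm$ degenerates conformally, so that the $\pa\dD$-image of each $K_{3d}^\pm$ cannot shrink by more than an absolute factor relative to the image of the adjacent $L_d^\pm$ without violating the extremal-length estimate in $R_d^{3d}$.
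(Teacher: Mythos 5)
Your first reduction (strong Markov property at the first hit of $K_{3d}$, giving $\hm{v}{L_d}{\O_d}\le\hm{v}{L_d}{\O_d\setminus K_{3d}}+\hm{v}{K_{3d}}{\O_d\setminus K_{3d}}$) is exactly how the paper begins, and both your uniformization picture for the slit domain and the extremal-length computation $\lambda_{R_d^{3d}}(L_d,L_{3d})\ge\frac{1}{2\pi}\log 3$ are correct. But the argument stops precisely where the work starts. Rewriting the target as the arc-length comparison $|J^+|+|J^-|\ge c'(|I^+|+|I^-|)$ on $\pa\dD$ is a tautological reformulation of the harmonic-measure inequality via conformal invariance, and the tool you then invoke --- ``a Beurling-type projection argument near the common endpoint $p^\pm$'' --- is aimed at the wrong place: $|J^\pm|/|I^\pm|$ equals $\hm{v}{K_{3d}^\pm}{\O_d\setminus K_{3d}}/\hm{v}{L_d^\pm}{\O_d\setminus K_{3d}}$, a global quantity that no local analysis at the attachment point $p$ can control. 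Likewise, knowing that $\phi(L_{3d})$ stays at definite conformal distance from $\phi(L_d\cup K_{3d})$ does not by itself compare the two families of boundary arcs. So step 3, which you yourself flag as the main obstacle, is a plausible claim rather than a proof.

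The missing idea is an intermediate cross-section. The paper decomposes at the middle circle $L_{2d}$: every path from $v$ to $L_d$ must cross it, so $\hm{v}{L_d}{\O_d\setminus K_{3d}}\le\int_{L_{2d}}\hm{z}{L_d\cup L_{3d}}{R_d^{3d}\setminus K_{3d}}\,\hm{v}{|dz|}{\O_{2d}\setminus K_{3d}}$, while $\hm{v}{K_{3d}}{\O_d\setminus K_{3d}}\ge\int_{L_{2d}}\hm{z}{K_{3d}}{R_d^{3d}\setminus K_{3d}}\,\hm{v}{|dz|}{\O_{2d}\setminus K_{3d}}$. Since the crosscut $K_{3d}$ separates any given $z\in L_{2d}$ from one of the two $\pa\O$-sides of the quadrilateral $R_d^{3d}$, one has $\hm{z}{K_{3d}}{R_d^{3d}\setminus K_{3d}}\ge\min\bigl\{\hm{z}{A_{3d}A_d}{R_d^{3d}},\hm{z}{B_dB_{3d}}{R_d^{3d}}\bigr\}$, and everything reduces to showing that, from every $z\in L_{2d}$, the harmonic measure in the \emph{unslit} $R_d^{3d}$ of the union of the two circular ends is dominated by that of each side. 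This is where your extremal-length estimate actually does the work: the moduli of all the sub-quadrilaterals cut out by the circles $L_{nd}$, $n=1,2,3$, are bounded below by $\frac{1}{2\pi}\log\frac{m}{n}$, which gives the required comparison uniformly in $z$ and in the shape of $\O$. Without some version of this intermediate-crossing step your outline does not close.
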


\begin{proof}
Note that $\hm{v}{L_d}{\O_d}\le \hm{v}{L_d}{\O_d\setminus K_{3d}} +
\hm{v}{K_{3d}}{\O_d\setminus K_{3d}}$. Thus, it is sufficient to prove that
\[
\hm{v}{L_d}{\O_d\setminus K_{3d}} \le \const\cdot\,\hm{v}{K_{3d}}{\O_d\setminus K_{3d}}.
\]
Furthermore, monotonicity arguments give
\[
\hm{v}{L_d}{\O_d\setminus K_{3d}} \le \int_{L_{2d}} \hm{v}{|dz|}{\O_{2d}\setminus
K_{3d}}\cdot\,\hm{z}{L_d\cup L_{3d}}{R_d^{3d}\setminus K_{3d}}
\]
and, in a similar manner,
\[
\hm{v}{K_{3d}}{\O_d\setminus K_{3d}} \ge \int_{L_{2d}} \hm{v}{|dz|}{\O_{2d}\setminus
K_{3d}}\cdot\,\hm{z}{K_{3d}}{R_d^{3d}\setminus K_{3d}}.
\]
Let $L_d=A_d B_d$ and so on (see Fig.~\ref{Fig:KLa}). Applying monotonicity arguments once
more, one sees
\[
\hm{z}{K_{3d}}{R_d^{3d}\setminus K_{3d}}\ge \min\left\{ \hm{z}{A_{3d}A_d}{R_d^{3d}}\,,\,
\hm{z}{B_dB_{3d}}{R_d^{3d}}\right\}.
\]
Thus, it is sufficient to prove that
\[
\hm{z}{A_dB_d\cup B_{3d}A_{3d}}{R_d^{3d}}\le \const\cdot\min\left\{
\hm{z}{A_{3d}A_d}{R_d^{3d}}\,,\, \hm{z}{B_dB_{3d}}{R_d^{3d}}\right\}
\]
for all $z\in L_{2d}$. Due to the conformal invariance of harmonic measure, the last estimate
follows from the uniform bounds on the extremal distances (conformal modulii of
quadrilaterals)
\[
\lambda_{A_{nd}B_{nd}B_{md}A_{md}}(A_{nd} A_{md}\,;\,B_{md} B_{nd})\ge
\frac{1}{2\pi}\,\log\frac{m}{n}\,,\qquad 1\le n< m\le 3.\qedhere
\]
\end{proof}

\subsection{Boundary Harnack principle and normalization on a ``straight'' part of the boundary}
\label{SectBoundaryNorm}

Recall that $\H^\mesh$ denotes the polygonal representation of a half-plane $\H=\{z:\Im z>0\}$
discretization (i.e., the union of all faces, edges and vertices that intersect $\H$, see Fig.
\ref{Fig:DiscrHalfPlane}B). As for bounded domains, denote by
$\dhm\mesh{u^\mesh}{\{x^\mesh\}}{\H^\mesh_\G}$ the probability of the event that the random
walk starting at \mbox{$u^\mesh\in\H^\mesh_\G$} first hits the boundary $\pa\H^\mesh_\G$ at a
vertex \mbox{$x^\mesh\in\pa\H^\mesh_\G$}. It is easy to see (e.g., using the unboundedness of
the free Green's function (\ref{Gasympt}) or Proposition \ref{PropWeakBeurling}) that
\[
\sum_{x^\mesh\in\pa\H^\mesh_\G}\dhm\mesh{u^\mesh}{\{x^\mesh\}}{\H^\mesh_\G}=1.
\]
Let
\begin{equation}
\label{gImDef} \gIm^\mesh(u^\mesh)= \Im u^\mesh - \sum_{x^\mesh\in\pa\H^\mesh_\G}\Im
x^\mesh\cdot \dhm\mesh{u^\mesh}{\{x^\mesh\}}{\C_+^\mesh}\quad \mathrm{for}\ \ u^\mesh\in
\H^\mesh_\G\,.
\end{equation}
The function $\gIm^\mesh$ is discrete harmonic in $\H^\mesh_\G$, $\gIm^\mesh=0$ on $\pa
\H^\mesh_\G$ and $|\gIm^\mesh(u^\mesh)-\Im u^\mesh|\le 2\mesh$ for all $u^\mesh\in\H^\mesh_\G$
(note that these conditions define $\gIm^\mesh$ uniquely). In particular, if $\Im
u^\mesh\in[3\mesh,5\mesh]$, then $\gIm^\mesh(u^\mesh)\asymp\mesh$ (here and below we write
\[
f\asymp g\quad \mathrm{iff}\ \ \const_1\cdot f \le g \le \const_2\cdot g
\]
for some positive absolute constants). Since $\gIm^\mesh\ge 0$ is discrete harmonic, this
implies
\[
\gIm^\mesh(x^\mesh_{\mathrm{int}})\asymp\mesh\ \ \mathrm{for\ all}\
x^\mesh=(x^\mesh;(x^\mesh_{\mathrm{int}}x^\mesh))\in\pa\H^\mesh_\G.
\]
Below we say that a discrete domain $\O^\mesh$ has a {\bf``straight'' boundary} near
$x^\mesh\in\pa\O^\mesh$, if $\O^\mesh$ and $\H^\mesh$ coincide near $x^\mesh$ (certainly, it's
more natural to include not only $\H^\mesh$ itself but all discrete half-planes into the
definition but $\H^\mesh$ will be sufficient for our purposes).
\begin{definition} \label{DefDnH}
For a function $H$ defined in a domain $\O^\mesh$ having a ``straight'' boundary near
$x^\mesh$ we define the value of its (inner) normal derivative at $x^\mesh$ as
\begin{equation}
\label{DefDnHformula} [\pa_n^\mesh H](x^\mesh):=
\frac{H(x^\mesh_{\mathrm{int}})-H(x^\mesh)}{\gIm^\mesh(x^\mesh_{\mathrm{int}})}.
\end{equation}
\end{definition}
\begin{remark}\label{RemDnH} In other words, we use the value
$\gIm^\mesh(x^\mesh_{\mathrm{int}})$ as a natural normalization constant, so that
$[\pa_n^\mesh \gIm^\mesh](x^\mesh)=1$. Note that, if $H(x^\mesh)=0$, then $[\pa_n^\mesh
H](x^\mesh)\asymp \mesh^{-1}H(x^\mesh_{\mathrm{int}})$.
\end{remark}
Below we need some rough estimates for the discrete harmonic measure in rectangles. Let
$R(s,t):=(-s;s)\ts(0;t)\ss\C$ be an open rectangle, $o^\mesh\in\pa\H^\mesh_\G$ denote the
closest to $0$ boundary vertex, $R^\mesh_\G=R^\mesh_\G(s,t)\ss\G$ be the discretization of
$R(s,t)$, and
\begin{align*}
L^\mesh_\G=L^\mesh_\G(s) := & \left\{v^\mesh\in \pa R^\mesh_\G(s,t): \Im v^\mesh \le
0\right\}, \cr U^\mesh_\G=U^\mesh_\G(s,t) := & \left\{v^\mesh\in \pa R^\mesh_\G(s,t): \Im
v^\mesh \ge t\right\}, \cr V^\mesh_\G=V^\mesh_\G(s,t) := & \left\{v^\mesh\in \pa
R^\mesh_\G(s,t): |\Re v^\mesh| \ge s\right\}
\end{align*}
be the lower, upper and vertical parts of the boundary $\pa R^\mesh_\G(s,t)$ (see Fig.
\ref{Fig:DiscrHalfPlane}B).
\begin{lemma}
\label{LemmaDhmR} Let $t\ge 2\mesh$ and $s\ge 2t$. Then
\[
\dhm\mesh{o^\mesh_{\mathrm{int}}}{U^\mesh_\G}{R^\mesh_\G(s,t)}\asymp \mesh/t\quad
\mathit{and}\quad \dhm\mesh{o^\mesh_{\mathrm{int}}}{V^\mesh_\G}{R^\mesh_\G(s,t)}\le
\const\cdot\,\mesh t/s^2.
\]
\end{lemma}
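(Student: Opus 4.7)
The plan is to prove the second estimate first by exhibiting an explicit discrete harmonic barrier built from a harmonic polynomial, and then to obtain both halves of the first estimate by comparison with the boundary values of the half-plane function $\gIm^\mesh$ on $\pa R^\mesh_\G(s,t)$.

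\textbf{Second estimate.} I would use the harmonic polynomial
\[
\psi(x+iy)=\frac{x^2-y^2+ty}{s^2}+\frac{C\mesh t}{s^2},
\]
for a sufficiently large constant $C=C(\eta)$. Since $\psi$ is a quadratic harmonic polynomial, Lemma~\ref{ApproxLemma}(i) guarantees that $\psi^\mesh$ is \emph{exactly} discrete harmonic. Under condition $\sps$, boundary vertices of $R^\mesh_\G(s,t)$ lie within $O(\mesh)$ of the ideal rectangular edges, and a direct evaluation using $|\Im a|\le O(\mesh)$ on $L^\mesh_\G(s)$, $|\Im a-t|\le O(\mesh)$ on $U^\mesh_\G(s,t)$, and $|\Re a|\ge s$ on $V^\mesh_\G(s,t)$ shows that for $C$ large enough the boundary inequalities $\psi^\mesh\ge 1$ on $V^\mesh_\G(s,t)$ and $\psi^\mesh\ge 0$ on $L^\mesh_\G(s)\cup U^\mesh_\G(s,t)$ both hold (the positive ``bulk'' signs $(\Re a)^2/s^2\ge 1$ on the vertical sides and $y(t-y)/s^2\ge 0$ for $y\in[0,t]$ dominate the $O(\mesh t/s^2)$ defects from the boundary layer, which are absorbed by $C\mesh t/s^2$). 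The discrete maximum principle then yields $\psi^\mesh(u)\ge \dhm{\mesh}{u}{V^\mesh_\G(s,t)}{R^\mesh_\G(s,t)}$ at every interior vertex, and plugging in $u=o^\mesh_{\mathrm{int}}$ (with $|\Re u|, |\Im u|=O(\mesh)$) gives the bound $\const\cdot \mesh t/s^2$.

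\textbf{First estimate.} Since $R^\mesh_\G(s,t)\subset \H^\mesh_\G$, the function $\gIm^\mesh$ of (\ref{gImDef}) is discrete harmonic in $R^\mesh_\G(s,t)$. On the boundary of the rectangle: $\gIm^\mesh\equiv 0$ on $L^\mesh_\G(s)$ (these points are also boundary vertices of the half-plane), $t\le \gIm^\mesh(a)\le t+2\mesh$ on $U^\mesh_\G(s,t)$, and $0\le \gIm^\mesh(a)\le t+2\mesh$ on $V^\mesh_\G(s,t)$. Expressing $\gIm^\mesh(u)$ as the integral of its boundary values against discrete harmonic measure then gives the two-sided pinching
\[
t\cdot \dhm{\mesh}{u}{U^\mesh_\G}{R^\mesh_\G}\;\le\; \gIm^\mesh(u)\;\le\; (t+2\mesh)\Bigl[\dhm{\mesh}{u}{U^\mesh_\G}{R^\mesh_\G}+\dhm{\mesh}{u}{V^\mesh_\G}{R^\mesh_\G}\Bigr].
\]
At $u=o^\mesh_{\mathrm{int}}$, where $\gIm^\mesh\asymp\mesh$ by the paragraph preceding the lemma, the left inequality immediately yields the upper bound $\dhm{U^\mesh_\G}\le \const\cdot\mesh/t$. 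For the lower bound I combine the right inequality (which gives $\dhm{U^\mesh_\G}+\dhm{V^\mesh_\G}\gtrsim\mesh/t$) with the second estimate, which for $s\ge 2t$ reads $\dhm{V^\mesh_\G}\le \const\cdot \mesh/(4t)$, and subtract.

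\textbf{Main obstacle.} The delicate point is that the absolute constants in $\gIm^\mesh(o^\mesh_{\mathrm{int}})/\mesh$ and in the barrier $C=C(\eta)$ are a priori independent, so the subtraction step need not leave a positive coefficient in front of $\mesh/t$ when $s$ barely exceeds $2t$. To close this gap I would first rerun the barrier analysis with $s$ replaced by $K_0 t$ for a large $K_0=K_0(\eta)$, making $\dhm{V^\mesh_\G}$ negligible compared with the $\gIm^\mesh/t$ lower bound on $\dhm{U^\mesh_\G}+\dhm{V^\mesh_\G}$; the remaining compact range $2\le s/t\le K_0$ is then handled by observing that the rescaled rectangles $R(s/t,1)$ form a Carath\'eodory-precompact family on which the continuous harmonic measure of the top from $iy_0$ satisfies $\o(iy_0;U;R(s/t,1))\asymp y_0$ uniformly, so Theorem~\ref{ThmDhmConv} transfers a uniform positive lower bound to $\dhm{\mesh}{o^\mesh_{\mathrm{int}}}{U^\mesh_\G}{R^\mesh_\G(s,t)}$ for all sufficiently small $\mesh/t$.
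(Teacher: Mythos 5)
Your treatment of the vertical sides is essentially the paper's own argument (the paper uses the barrier $h_3(x+iy)=\bigl(x^2+(y+2\mesh)(t+2\mesh-y)\bigr)/s^2$, which is your $\psi$ up to the $O(\mesh)$ adjustments), and your upper bound $\dhm\mesh{o^\mesh_{\mathrm{int}}}{U^\mesh_\G(s,t)}{R^\mesh_\G(s,t)}\le\const\cdot\mesh/t$ via the boundary values of $\gIm^\mesh$ is a correct alternative to the paper's upper barrier. The genuine gap is exactly where you flag it: the lower bound on $\dhm\mesh{o^\mesh_{\mathrm{int}}}{U^\mesh_\G}{R^\mesh_\G}$ in the range $2\le s/t\le K_0$, and the repair you propose does not work. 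Theorem~\ref{ThmDhmConv} only controls points that are jointly $r$-inside the domain for a \emph{fixed} $r>0$, whereas $o^\mesh_{\mathrm{int}}$ sits at distance $O(\mesh)$ from $\pa R^\mesh_\G(s,t)$; and even at interior points its error $\ve(\mesh)$ is an additive $o(1)$ with no rate (it comes from a compactness argument), so it cannot resolve a quantity of size $\mesh/t\to 0$. Indeed a ``uniform positive lower bound'' on $\dhm\mesh{o^\mesh_{\mathrm{int}}}{U^\mesh_\G}{R^\mesh_\G}$ is impossible, since that quantity is itself $\asymp\mesh/t$; and interpolating from a fixed interior point such as $\frac{i}{2}t$ down to $o^\mesh_{\mathrm{int}}$ would require precisely the boundary estimate being proved.

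The clean way to close the gap --- and what the paper does --- is to keep the linear gain and the quadratic loss inside a \emph{single} harmonic polynomial, so that the two constants you worry about are coupled by construction. Take
\[
h_1(x+iy)=\lt(1-\frac{(t+2\mesh)^2}{s^2}\rt)\cdot\frac{y}{t+2\mesh}+\frac{y^2-x^2}{s^2},
\]
which is exactly discrete harmonic by Lemma~\ref{ApproxLemma}(i), satisfies $h_1\le 0$ on $L^\mesh_\G(s)\cup V^\mesh_\G(s,t)$ and $h_1\le 1$ on $U^\mesh_\G(s,t)$, hence minorizes $\dhm\mesh{\cdot}{U^\mesh_\G(s,t)}{R^\mesh_\G(s,t)}$. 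For $s\ge 2t$ the prefactor $1-(t+2\mesh)^2/s^2$ is at least $\frac12$ once $\mesh$ is small, so at vertices with $\Im v^\mesh\asymp\mesh$, $|\Re v^\mesh|=O(\mesh)$ this gives a lower bound $\const\cdot\mesh/t$ with no case split on $s/t$; harmonicity and nonnegativity of the harmonic measure then transfer the bound to $o^\mesh_{\mathrm{int}}$. (A symmetric upper barrier $h_2$ gives the matching upper bound, though your $\gIm^\mesh$ argument already covers that direction.) If you prefer to keep your two-step scheme, you must replace the appeal to Theorem~\ref{ThmDhmConv} by some quantitative boundary estimate of this barrier type; as written, the compact range $2\le s/t\le K_0$ is not handled.
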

\begin{remark} The last estimate is very rough but sufficient for us. Standard arguments
similar to the proof of Proposition~\ref{PropWeakBeurling} easily give an exponential bound.
\end{remark}
\begin{proof}
We consider two harmonic polynomials
\[
h_1(x+iy)=\frac{y\!+\!2\mesh}{t\!+\!2\mesh}\qquad\mathrm{and}\qquad
h_2(x+iy):=\frac{y}{t\!+\!2\mesh}\,-\,\frac{x^2+(y\!+\!2\mesh)(t\!+\!2\mesh\!-\!y)}{s^2}.
\]
Their restrictions on $\G$ are discrete harmonic due to Lemma \ref{ApproxLemma}~(i), and
\begin{align*}
h_1(x+iy)\ge 1\ge h_2(x+iy),&\quad \mathrm{if}\ \ y\in[t;t\!+\!2\mesh], \cr h_1(x+iy)\ge 0\ge
h_2(x+iy),&\quad \mathrm{if}\ \ y\in[-2\mesh;0],\cr h_1(x+iy)\ge 0\ge h_2(x+iy),&\quad
\mathrm{if}\ \ y\in [-2\mesh,t\!+\!2\mesh]\ \ \mathrm{and}\ \ |x|\in [s;s\!+\!2\mesh].
\end{align*}
Thus, $h_1(v^\mesh)\ge \dhm\mesh{v^\mesh}{U^\mesh_\G}{R^\mesh_\G}\ge h_2(v^\mesh)$ for all
$v^\mesh\in\pa R^\mesh_\G$, and so, by the maximum principle, for all $v^\mesh\in R^\mesh_\G$.
In particular, if $t\ge 5\mesh$ (the case $t\le 5\mesh$ is trivial), then
\[
\frac{7\mesh}{t\!+\!2\mesh}\ge\dhm\mesh{v^\mesh}{U^\mesh_\G}{R^\mesh_\G}\ge
\frac{3\mesh}{t\!+\!2\mesh}-\frac{5\mesh t}{s^2}\ge\frac{\mesh}{t\!+\!2\mesh}\quad
\mathrm{for}\ \ v^\mesh\in [-2\mesh;2\mesh]\times [3\mesh;5\mesh],
\]
because of $5t(t\!+\!2\mesh)\le 7t^2\le 2s^2$.  Since $\o^\mesh$ is discrete harmonic and
nonegative, we obtain $\o^\mesh\asymp \mesh/t$ everywhere near $o^\mesh$. The upper bound for
$\dhm\mesh{o^\mesh}{V^\mesh_\G}{R^\mesh_\G}$ follows by the consideration of the quadratic
harmonic polynomial
\[
h_3(x+iy)=\frac{x^2+(y\!+\!2\mesh)(t\!+\!2\mesh\!-\!y)}{s^2}
\]
which is nonnegative on $L^\mesh_\G\cup U^\mesh_\G$ and not less than $1$ on $V^\mesh_\G$.
\end{proof}

\begin{proposition}[\bf Boundary Harnack principle]
\label{PropBoundHar} Let $t \ge \mesh$, $H$ be a nonnegative discrete harmonic function in a
discrete rectangle $R^\mesh(2t,2t)$, $o^\mesh$ be the boundary vertex closest to $0$, and
$c^\mesh$ denote the inner vertex closest to the point $c=it$. If $H=0$ everywhere on the
lower boundary $L^\mesh(2t)$, then the double-side estimate
\[
[\pa^\mesh_n H](o^\mesh)\asymp \frac{H(c^\mesh)}{t}
\]
holds true with some constants independent of $\mesh$ and $t$.
\end{proposition}

\begin{proof} Recall that $[\pa^\mesh_n H](o^\mesh)\asymp \mesh^{-1}\cdot
H(o^\mesh_{\mathrm{int}})$ (see Remark~\ref{RemDnH}). Let $t\ge 4\mesh$ (the case $t\le
4\mesh$ is trivial). It follows from discrete Harnack Lemma
(Proposition~\ref{PropHarnack}~(ii)) that the values of $H$ on $U^\mesh_\G(t,\frac{1}{2}t)$
are uniformly comparable with $H(c^\mesh)$. Then,
\[
H(o^\mesh_{\mathrm{int}}) \ge \dhm\mesh{o^\mesh_{\mathrm{int}}}{U^\mesh_\G}
{R^\mesh_\G(t,{\textstyle\frac{1}{2}}t)}\cdot \const \cdot H(c^\mesh) \ge \const
\cdot{\mesh}/{t}\cdot H(c^\mesh).
\]
Further, note that $M:=\max_{v\in R^\mesh_\G(t,\frac{1}{2}t)} H(v) \le \const\cdot
H(c^\mesh)$. Indeed, by the maximum principle, $H\ge M$ holds true along some nearest-neighbor
path $K$ running from $\pa R^\mesh(t,{\textstyle\frac{1}{2}}t)$ to $U^\mesh(2t,2t)$ or
$V^\mesh(2t,2t)$ (this path cannot end on $L^\mesh(2t)$ since $H=0$ there). Arguing as in the
proof of Proposition~\ref{PropWeakBeurling}, it is easy to see that the probability that the
random walk starting at $c^\mesh$ hits $K$ before $\pa R^\mesh(2t,2t)$ is bounded below by
some absolute constant, so $H(c^\mesh)\ge\const\cdot M$. Then, Lemma~\ref{LemmaDhmR} gives
\[
H(o^\mesh_{\mathrm{int}}) \le \dhm\mesh{o^\mesh_{\mathrm{int}}}{U^\mesh_\G\cup V^\mesh_\G}
{R^\mesh_\G(t,{\textstyle\frac{1}{2}}t)}\cdot M \le \const \cdot{\mesh}/{t}\cdot
H(c^\mesh).\qedhere
\]
\end{proof}

From now on, we consider only discrete domains $(\O^\mesh_\G;a^\mesh)$ such that
\begin{equation}
\label{xAssumptionOdisc} R^\mesh_\G(T,T)\ss\O^\mesh_\G,\qquad
L^\mesh_\G(T)\ss\pa\O^\mesh_\G,\quad \mathrm{and}\quad a^\mesh\in\pa\O^\mesh_\G\setminus
L^\mesh_\G(T)
\end{equation}
for some $T>0$. Note that all continuous domains $(\O;a)$ appearing as Carath\'eodory limits
of these $(\O^\mesh;a^\mesh)$ satisfy
\begin{equation}
\label{xAssumptionOcont} (-T;T)\!\times\!(0;T)\ss\O,\qquad [-T;T]\ss\pa\O\quad
\mathrm{and}\quad a\in\pa\O\setminus (-T;T).
\end{equation}

For a domain $(\O;a)$ satisfying (\ref{xAssumptionOcont}), we define the {continuous Poisson
kernel \mbox{$P_0=P_0(\ccdot;a;\O)$} normalized at $0$} as the unique solution of the boundary
value problem
\[
\D P_0=0\ \ \mathrm{in}\ \ \O,\quad P_0=0\ \ \mathrm{on}\ \ \pa\O\setminus\{a\},\quad P_0\ge
0,\quad \mathrm{and}\quad [\pa_n P_0](0)=1.
\]
where $[\pa_nP_0](0)=[\pa_yP_0](0)$ denotes the (inner) normal derivative of $P_0$ at $0$.

For a discrete domain $(\O^\mesh_\G;a^\mesh)$ satisfying (\ref{xAssumptionOdisc}), we call
$P^\mesh_{o^\mesh}=P^\mesh_{o^\mesh}(\ccdot;a^\mesh;\O^\mesh_\G)$ the {\bf discrete Poisson
kernel normalized at $\bm{o^\mesh}$}, if
\[
\D^\mesh P^\mesh_{o^\mesh}=0\ \ \mathrm{in}\ \ \O^\mesh_\G,\quad P^\mesh_{o^\mesh}=0\ \
\mathrm{on}\ \ \pa\O^\mesh_\G\setminus\{a\},\quad \mathrm{and}\quad [\pa_n^\mesh
P^\mesh_{o^\mesh}](o^\mesh)=1,
\]
where the discrete normal derivative $\pa^\mesh_n$ is given by (\ref{DefDnHformula}). Note
that $P^\mesh_{o^\mesh}$ is uniquely defined by these conditions, namely
\[
P^\mesh_{o^\mesh}(\ccdot;a^\mesh;\O^\mesh_\G)= \dhm\mesh\ccdot{a^\mesh}{\O^\mesh_\G}\cdot
\frac{\gIm^\mesh(o^\mesh_{\mathrm{int}})}
{\dhm\mesh{o^\mesh_{\mathrm{int}}}{a^\mesh}{\O^\mesh_\G}}\,.
\]

\begin{theorem}
\label{ThmDPoConv} The discrete Poisson kernels
$P^\mesh_{o^\mesh}(\ccdot;a^\mesh;\O^\mesh_\G)$ defined for the class of discrete domains
satisfying (\ref{xAssumptionOdisc}) with some $T>0$ are uniformly \mbox{$C^1$\!-close} inside
$\O^\mesh$ (in the sense of Definition \ref{DefUniformC1conv}) to the continuous Poisson
kernels $P_0(\ccdot;a^\mesh;\wt{\O}^\mesh)$, where $\wt{\O}^\mesh$ denotes the modified
polygonal representation of the discrete domain $\O^\mesh_\G$ with the ``straight'' part of
the boundary $L^\mesh(T)\ss\pa\O^\mesh$ replaced by the straight segment $[-T,T]$. The rate of
the uniform convergence may depend on $T$.
\end{theorem}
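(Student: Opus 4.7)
The plan is to apply Proposition~\ref{PropUniC1} once again. Carath\'eodory stability of $P_0(\cdot;a;\wt\O)$ follows from the explicit representation $P_0(u;a;\wt\O)=|\phi_\O'(0)|^{-1}P_\H(\phi_\O(u);\phi_\O(a))$, where $\phi_\O:\wt\O\to\H$ is the Riemann uniformization normalized by $\phi_\O(0)=0$, $\phi_\O'(0)>0$, and $P_\H$ is the standard half-plane Poisson kernel. Carath\'eodory convergence of $(\wt\O;a)$ yields locally uniform convergence of the $\phi_\O$'s on compact subsets of the kernel, and since the boundary near $0$ is the fixed straight segment $[-\tfrac{1}{2}S,\tfrac{1}{2}S]$, Schwarz reflection and Cauchy estimates also give convergence of the $C^1$-data of $\phi_\O$ at $0$, hence of $|\phi_\O'(0)|$. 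It remains to establish pointwise convergence~(\ref{AssumptBPWconv}).

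Fix an interior point $v\in\Int\wt\O$ jointly $r$-inside $\wt\O$ with $u$. Since both $P^\mesh_{o^\mesh}$ and the interior-normalized discrete Poisson kernel $P^\mesh(\cdot;v^\mesh;a^\mesh;\O^\mesh_\G)$ of Sect.~\ref{SectConvPoissonKernelInt} are nonnegative discrete harmonic in $\O^\mesh_\G$ and vanish on $\pa\O^\mesh_\G\setminus\{a^\mesh\}$, they are proportional:
\[
P^\mesh_{o^\mesh}(\cdot)=C^\mesh\cdot P^\mesh(\cdot;v^\mesh;a^\mesh;\O^\mesh_\G),\qquad C^\mesh=\frac{\gIm^\mesh(o^\mesh_{\mathrm{int}})}{P^\mesh(o^\mesh_{\mathrm{int}};v^\mesh;a^\mesh;\O^\mesh_\G)},
\]
and likewise $P_0(\cdot;a;\wt\O)=C\cdot P(\cdot;v;a;\wt\O)$ with $C=1/\pa_y P(0;v;a;\wt\O)$. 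Theorem~\ref{ThmDPwConv} already gives $P^\mesh(u^\mesh;v^\mesh;a^\mesh;\O^\mesh_\G)\to P(u;v;a;\wt\O)$, so the pointwise convergence of $P^\mesh_{o^\mesh}(u^\mesh)$ to $P_0(u;a;\wt\O)$ boils down to the scalar limit
\begin{equation}\label{xKeyRatio}
\frac{P^\mesh(o^\mesh_{\mathrm{int}};v^\mesh;a^\mesh;\O^\mesh_\G)}{\gIm^\mesh(o^\mesh_{\mathrm{int}})}\ \longrightarrow\ \pa_y P(0;v;a;\wt\O).
\end{equation}

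To prove (\ref{xKeyRatio}) I would exploit the straight-boundary hypothesis. Schwarz reflection across $[-\tfrac{1}{2}S,\tfrac{1}{2}S]$ makes $P(\cdot;v;a;\wt\O)$ real-analytic near $0$, yielding $P(z)=\pa_y P(0)\cdot\Im z+O(|z|\cdot\Im z)$, while $|\gIm^\mesh-\Im|\le 2\mesh$ everywhere by definition. Fix $t>0$ small and $s=\sqrt{t}$ (so that $s\gg t$), with $s\le \tfrac{1}{2}S$ and $t\le T$, and consider the rectangle $R^\mesh_\G(s,t)\ss\O^\mesh_\G$. The function $Q^\mesh:=P^\mesh-\pa_y P(0)\gIm^\mesh$ is discrete harmonic in $R^\mesh_\G(s,t)$ and vanishes on $L^\mesh_\G(s)$, so
\[
Q^\mesh(o^\mesh_{\mathrm{int}})=\!\!\sum_{x\in U^\mesh_\G(s,t)\cup V^\mesh_\G(s,t)}\!\!\dhm\mesh{o^\mesh_{\mathrm{int}}}{\{x\}}{R^\mesh_\G(s,t)}\cdot Q^\mesh(x).
\]
On the ``bulk'' part of the boundary (the top $U^\mesh_\G$ and the upper half of the vertical sides, at distance $\ge t/2$ from $\pa\O^\mesh$), the interior convergence of Theorem~\ref{ThmDPwConv} combined with the Taylor expansion of $P$ gives $|Q^\mesh(x)|\le C(st+\mesh+\ve_\mesh)$. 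On the lower portion of $V^\mesh_\G$ one controls $|Q^\mesh(x)|\le Ct$ via a discrete boundary-Harnack comparison of the nonnegative harmonic functions $P^\mesh$ and $\gIm^\mesh$, both vanishing on $L^\mesh_\G(S)$. Lemma~\ref{LemmaDhmR} supplies the harmonic-measure weights $\asymp\mesh/t$ on the bulk and $O(\mesh t/s^2)$ on the vertical sides, so
\[
|Q^\mesh(o^\mesh_{\mathrm{int}})|\le C\mesh\cdot\bigl(s+\ve_\mesh/t+\mesh/t\bigr)+C\mesh\cdot t^2/s^2.
\]
Letting $\mesh\to 0$ with $t$ fixed and then $t\to 0$ (using $s=\sqrt{t}$, so $t^2/s^2=t\to 0$) gives $Q^\mesh(o^\mesh_{\mathrm{int}})=o(\mesh)$; together with $\gIm^\mesh(o^\mesh_{\mathrm{int}})\asymp\mesh$ this yields (\ref{xKeyRatio}).

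The main difficulty is bounding $|Q^\mesh|$ near the bottom ends of $V^\mesh_\G(s,t)$: those points sit within $O(\mesh)$ of $\pa\O^\mesh$, so Theorem~\ref{ThmDPwConv} affords no pointwise control of $P^\mesh$ there. The conceptual remedy is a discrete boundary-Harnack principle for $P^\mesh$ relative to $\gIm^\mesh$ along the straight piece $L^\mesh_\G(S)$, which is standard in the continuous setting; alternatively, as above, one sidesteps it by choosing the rectangle strongly elongated ($s\gg t$) so that Lemma~\ref{LemmaDhmR} renders the total harmonic-measure weight of the problematic corner region negligible.
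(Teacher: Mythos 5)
Your overall strategy is essentially the paper's: both arguments hinge on the harmonic-measure decomposition of $Q^\mesh$ in the rectangle $R^\mesh_\G(s,t)$ together with Lemma~\ref{LemmaDhmR}, the only structural difference being that you normalize against the interior-normalized kernel of Theorem~\ref{ThmDPwConv} via the exact proportionality of the two discrete kernels, which neatly replaces the paper's subsequential-limit identification $H=\mu P_0(\cdot;a;\O)$ by a single scalar limit for the ratio $P^\mesh(o^\mesh_{\mathrm{int}})/\gIm^\mesh(o^\mesh_{\mathrm{int}})$. That reduction, the Carath\'eodory stability via reflection across the fixed straight segment, and the bulk estimates on $U^\mesh_\G(s,t)$ are all fine.

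The gap is exactly where you flag it: the control of $Q^\mesh$ on the lower ends of $V^\mesh_\G(s,t)$. The bound $|Q^\mesh|\le Ct$ there rests on a discrete boundary Harnack principle that is established nowhere in the paper and whose proof on general isoradial graphs is a nontrivial extra piece of work, so it cannot simply be cited. Your fallback does not repair this as calibrated: with $s=\sqrt t$ the vertical-side weight is $\asymp\mesh t/s^2=\mesh$, i.e.\ comparable to $\gIm^\mesh(o^\mesh_{\mathrm{int}})$ itself, so an $O(1)$ bound in the corners contributes an error of order $\mesh$ to $Q^\mesh(o^\mesh_{\mathrm{int}})$, which is fatal for the ratio limit. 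The paper's repair is twofold, and both halves are needed. First, the proof of Theorem~\ref{ThmDPwConv} yields more than interior control: the maximum-principle argument there (an increasing nearest-neighbour path from a maximum of $P^\mesh$ can only terminate at $a^\mesh$, and such a path has uniformly positive harmonic measure seen from $v^\mesh$) bounds $P^\mesh$ uniformly on \emph{all} of $\O^\mesh_{3d}\cap\G^\mesh$, including vertices within $O(\mesh)$ of the straight boundary; hence $|Q^\mesh|\le\const$ on the whole of $V^\mesh_\G(s,t)$ without any boundary Harnack input. Second, one must take the aspect ratio with $t/s^2\to 0$ (say $t=s^3$) rather than $t=s^2$: then the corner contribution is $O(\mesh t/s^2)=o(\mesh)$ while the bulk contribution is $O(\mesh s)+o(\mesh)$, and the ratio limit follows. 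With these two adjustments your argument closes and coincides in substance with the paper's.
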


\begin{proof}
The continuous Poisson kernel $P_0$ is Carath\'eodory stable, so (\ref{AssumptB_CaraStab})
holds true. Thus, it is sufficient to check (\ref{AssumptA_PWConv}).

Let $(\O^\mesh;u^\mesh;a^\mesh)\CaraTo (\O;u;a)$ and $c^\mesh$ denote the vertex closest to
the point $\frac{1}{2}iT$. Due to the boundary Harnack principle
(Proposition~\ref{PropBoundHar}), the values $P^\mesh_{o^\mesh}(c^\mesh)$ are uniformly
bounded by some constant (depending on $T$). Hence,
$P^\mesh_{o^\mesh}(\ccdot;a^\mesh;\O^\mesh_\G)$ are uniformly (w.r.t~$\mesh$) bounded on each
compact subset of $\O$ because of the discrete Harnack Lemma (Proposition~\ref{PropHarnack}).
Then, due to Proposition~\ref{PropUniformComp}, one can take a subsequence $\mesh_k\to 0$ so
that
\[
P^{\mesh_k}_{o^\mesh}(\ccdot;a^{\mesh_k};\O^{\mesh_k}_\G)\rightrightarrows H
\]
uniformly on compact subsets of $\O$, where $H\ge 0$ is some harmonic in $\O$ function. We
need to prove that $H(u)=P_0(u;a;\O)$ for each subsequential limit $H$.

Repeating the arguments given in the proof of Theorem~\ref{ThmDPwConv}, one obtains that,
first, for each $r>0$ the functions $P^{\mesh}_{o^\mesh}$ are uniformly bounded {\it
everywhere} in $\O^\mesh_\G$ away from $a^\mesh$ (in particular, everywhere in the smaller
rectangle $R^\mesh_\G(\frac{1}{2}T,\frac{1}{2}T)$) and, second, $H=0$ on
$\pa\O\setminus\{a\}$. Therefore, due to $H\ge 0$,
\[
H= \mu P_0(\ccdot;a;\O)\quad \mathrm{for\ some}\ \ \m\ge 0.
\]

Now one needs to prove that $\mu=1$. Let
\[
Q^\mesh(\ccdot):=P^\mesh_{o^\mesh}(\ccdot;a^\mesh;\O^\mesh_\G)-\gIm^\mesh(\ccdot).
\]
By definition, the function $Q^\mesh$ is discrete harmonic in $R^\mesh_\G(T,T)$, $Q^\mesh=0$
on the lower boundary $L^\mesh_\G(T)$, $Q^\mesh(o^\mesh_{\mathrm{int}})=0$ and
\[
Q^\mesh(v)\rightrightarrows \mu P_0(v;a;\O) - \Im v\ \ \mathrm{as}\ \ \mesh=\mesh_k\to 0
\]
uniformly on compact subsets of $R(T,T)$. Since $P_0|_{(-T,T)}=0$ and $[\pa_nP_0](0)=1$, one
has
\[
P_0(x\!+\!iy)=y+O(xy+y^2)\ \ \mathrm{for}\ \ \textstyle x+iy\in
[-\frac{1}{2}T;\frac{1}{2}T]\times[0;\frac{1}{2}T].
\]
Thus, for any fixed $T\gg s\gg t>0$, the following hold true:
\[
Q^\mesh \rra (\m\!-\!1)t + O(st)\ \ \mathrm{as}\ \ \mesh\to 0\ \ \mathrm{uniformly\ \ on}\ \
U^\mesh_\G(t,s),
\]
\[
Q^\mesh=0\ \ \mathrm{on}\ \ {L^\mesh_\G(s)} \qquad\mathrm{and}\qquad |Q^\mesh|\le\const\ \
\mathrm{on}\ \ V^\mesh_\G(s,t).
\]
Then, the normalization $Q^\mesh(o^\mesh_{\mathrm{int}})=0$ and Lemma~\ref{LemmaDhmR} give
\[
\left|(\m\!-\!1)t + O(st)+o_{\mesh\to 0}(1)\right|\cdot\mesh/t \le \const\cdot\,\mesh t/s^2\ \
\mathrm{as}\ \ \mesh\to 0.
\]
So, for any $s$ and $t$, one has $|\mu-1| \le\const\cdot (s+t/s^2)$. Setting $t:=s^3$ and
passing to the limit as $s\to 0$, one arrives at $\m\!=\!1$.
\end{proof}

\renewcommand{\thesection}{A}
\section{Appendix}
\setcounter{equation}{0}
\renewcommand{\theequation}{A.\arabic{equation}}

\subsection{Kenyon's asymptotics for the Green's function and the Cauchy kernel}

\label{SectAKenyonFreeGreen}

Below we give a sketch of R.~Kenyon's \cite{Ken02} arguments. See also \cite{Buck08}.

\begin{proof}[\bf Proof of the Theorem~\ref{ThmKenyonHarm}]
Following J.~Ferrand \cite{Fer44}, R.~Kenyon \cite{Ken02} and Ch.~Mercat \cite{Mer07}, we
introduce {\bf discrete exponentials}
\begin{equation}
\label{DiscrExp} e(\l,u;u_0):= \prod_{j=0}^{2k-1} \frac{1+\frac{\l}{2}(u_k\!-\!u_{k-1})}
{1-\frac{\l}{2}(u_k\!-\!u_{k-1})},
\end{equation}
where $P_{u_0u}=u_0u_1u_2...u_{2k-1}u_{2k}$ is a path from $u_0$ to $u_{2k}=u$ on the
corresponding rhombic lattice (thus, $u_{2j}\in\G$ and $u_{2j-1}\in\G^*$ for all $j$). We
prefer the parametrization which is closest to the continuous case, so that $e(\l,u;u_0)\to
\exp[\l(u\!-\!u_0)]$ as $\mesh\to 0$. It's easy to see that this definition does not depend on
the choice of the path. Since the angles of rhombi are bounded from $0$ and $\pi$, one can
choose $P_{u_0u}$ so that the following condition holds:
\begin{quotation}
\noindent for all $j$ either (a) $|\arg(u_{j+1}\!-\!u_j)-\arg(u\!-\!u_0)|< \frac{\pi}{2}$ or
(b) $u_j$ and $u_{j+2}$ are opposite vertices of some rhombus and
\mbox{$|\arg(u_{j+2}\!-\!u_j)-\arg(u\!-\!u_0)|< \frac{\pi}{2}$}. In particular,
$\arg(u_{j+1}\!-\!u_j)-\arg(u\!-\!u_0)\in(-\pi,\pi)$ for all $j$.
\end{quotation}
Define (see R.~Kenyon \cite{Ken02} and A.~Bobenko, Ch.~Mercat and Yu.~Suris \cite{BMS05})
\begin{equation}
\label{AgDefInt} \wt{G}_\G(u;u_0) = \frac{1}{8\pi^2i}\int_C \frac{\log
\l}{\l}\,e(\l,u;u_0)\,d\l.
\end{equation}
where $C$ is a curve  which runs counter clockwise around the disc of (large) radius $R$ from
the angle $\arg\ol{(u\!-\!u_0)}-\pi$ to $\arg\ol{(u\!-\!u_0)}+\pi$, then along the segment
$e^{i\arg\ol{(u-u_0)}}[-R,-r]$, then clockwise around the disc of (small) radius $r$ and then
back along the same segment (the integral does not depend on the $\log$ branch, since
$e(0,u;u_0)=e(\iy,u;u_0)=1$).

This function is discrete harmonic away from $u_0$ since all discrete exponentials are
harmonic (as functions of $u$) and one can use the same contour of integration for all
$u_s\sim u$. Furthermore, $\wt{G}(u_0;u_0)=0$ and, by straightforward computation,
\[
\wt{G}_\G(u_s;u_0)=\frac{\theta_s\cot\theta_s}{\pi},\ \ \mathrm{if}\ \ u_s\sim u,\qquad
\mathrm{so}\qquad \D^\mesh \wt{G}_\G(u_0;u_0)\cdot\weightG{u_0}=1.
\]

Rotating and scaling the plane, one may assume that $\arg(u\!-\!u_0)=0$ and $\mesh=1$. It's
easy to see that the contribution of intermediate $\l=-t<0$ to the integral in
(\ref{AgDefInt}) is exponentially small. Indeed, in case (a) one has
\[
\lt|\frac{1-\frac{t}{2}e^{i\b_j}} {1+\frac{t}{2}e^{i\b_j}}\rt|^2 = 1 -
\frac{8t\cos\b_j}{t^2+4t\cos\b_j+4}\le \exp\lt[-\frac{8t\cos\b_j}{(t+2)^2}\rt],
\]
where $\b_j=\arg(u_{j+1}\!-\!u_j)$ and $\cos\b_j=\Re(u_{j+1}\!-\!u_j)>0$. Similarly, in case
(b),
\[
\lt|\frac{(1-\frac{t}{2}e^{i\b_j})(1-\frac{t}{2}e^{i\b_{j+1}})}
{(1+\frac{t}{2}e^{i\b_j})(1+\frac{t}{2}e^{i\b_{j+1}})}\rt|^2
\le\exp\lt[-\frac{8t(\cos\b_j\!+\!\cos\b_{j+1})}{(t+2)^2}\rt],
\]
due to $\cos\b_j\!+\!\cos\b_{j+1}=\Re(u_{j+2}\!-\!u_j)>0$. Thus,
\[
|e(-t,u;u_0)|\le \exp\lt[-\frac{4t(u\!-\!u_0)}{(t+2)^2}\rt]
\]
and the asymptotics of (\ref{AgDefInt}) as $|u\!-\!u_0|\to\iy$ are determined by the
asymptotics of $e(\l,u;u_0)$ near $0$ and $\iy$. Some version of the Laplace method (see
\cite{Ken02} and \cite{Buck08}) gives
\[
\wt{G}_\G(u;u_0)=\frac{1}{2\pi}\log|u\!-\!u_0| + \frac{\gamma_{\mathrm{Euler}}+\log
2}{2\pi}+O(|u\!-\!u_0|^{-2}),
\]
where the remainder is of order $|u\!-\!u_0|^{-2}$ due to
\[
\frac{d^2}{d\l^2}\,\log (e(\l,u;u_0))\Big|_{\l=0}=\frac{d^2}{d\l^2}\,\log
(e(\l,u;u_0))\Big|_{\l=\iy}=0.
\]
The uniqueness of $G(\ccdot;u_0)$ (and $\Im G=0$) easily follows by the Harnack inequality
(Corollary~\ref{CorLipHarm}). Indeed, $G:=G_1(\ccdot;u_0)-G_2(\ccdot;u_0)$ would be discrete
harmonic everywhere on $\G$ and $\max_{|u|\le R} |H(u)|/R\to 0$ as $R\to\iy$, so $G(u)\equiv
G(u_0)=0$.
\end{proof}

\begin{proof}[\bf Proof of Theorem~\ref{ThmKenyonHol}]
As in Theorem~\ref{ThmKenyonHarm}, $K(\ccdot;z_0)$ can be explicitly constructed using
(modified) discrete exponentials. Similarly to (\ref{DiscrExp}), denote
\begin{equation}
\label{xEv0z0Def} e(\l,u_0^{\pm};z_0) :=
\frac{1}{(1-\frac{\l}{2}(u_0^{\pm}\!-\!w_0^{-}))(1-\frac{\l}{2}(u_0^{\pm}\!-\!w_0^{+}))}
\end{equation}
for the ``black'' vertices $u_0^{\pm}\in\G$ of the rhombus $u_0^{-}w_0^{-}u_0^{+}w_0^{+}$
centered at $z_0$ and, by induction,
\[
\frac{e(\l,w;z_0)}{e(\l,u;z_0)}:=\frac{1+\frac{\l}{2}(w\!-\!u)}{1-\frac{\l}{2}(w\!-\!u)}\quad
\mathrm{for\ all}\ \ w\sim u,\ u\in\G,\ w\in\G^*.
\]
Then, all $e(\l,{\ccdot};z_0)$ are well-defined and discrete holomorphic on $\L$. Let (see
\cite{Ken02})
\[
K(v;z_0)=\frac{1}{\pi}\int_{-\ol{(v-z_0)}\iy}^0
e(\l,v;z_0)d\l,
\]
where the integral being, say, along the ray $\arg\z=\arg\ol{(v\!-\!z_0)}\pm\pi$ (taking the
path from $z_0$ to $u$ as in the proof of Theorem~\ref{ThmKenyonHarm}, one guarantees that all
poles of $e(\ccdot,v;z_0)$ are in $|\arg\l-\arg(v\!-\!z_0)|<\pi $). Then $K(\ccdot;z_0)$ is
holomorphic everywhere except $z_0$. Straightforward calculations give
\[
\weightE{z_0}{u_0^\pm}\cdot K(u_0^\pm;z_0)=\frac{4\theta_{z_0}}{\pi},\quad \mathrm{and}\quad
\weightE{z_0}{w_0^\pm}\cdot
K(w_0^\pm;z_0)=\frac{4\theta_{z_0}^*}{\pi}=\frac{4(\frac{1}{2}\pi-\theta_{z_0})}{\pi}.
\]
so $[\dopa K](z_0;z_0)\cdot \weightDS{z_0}= 1$. Scaling the plane, one may assume that
$\mesh=1$. As in Theorem~\ref{ThmKenyonHarm}, the integrand is exponentially small for
\mbox{intermediate $\l$}. One has
\[
e(\l,v;z_0)=\exp\left[\l(v\!-\!z_0)+O(|\l|^2|v\!-\!z_0|)\right],\qquad \l\to 0,
\]
and
\[
e(\l,v;z_0)=\frac{4\ol{\t}^2}{\l^2}\cdot\exp\lt[\frac{4\ol{(v\!-\!z_0)}}{\l} +
O\lt(\frac{|v\!-\!z_0|}{|\l|^2}\rt)\rt],\quad \l\to\iy,
\]
where $\t=e^{i\arg(u_0^{+}-u_0^{-})}$, if $v\in\G$, and $\t=e^{i\arg(w_0^{+}-w_0^{-})}$, if
$v\in\G^*$ ($\ol{\t}^2$ comes from the first factors (\ref{xEv0z0Def}) of $e(\l,v;z_0)$).
Summarizing, one arrives at
\[
K(u;z_0)=\frac{1}{\pi}\lt[\frac{1}{v\!-\!z_0} + \frac{\ol{\t}^2}{\ol{v\!-\!z_0}}\rt] +
O\lt(\frac{1}{|v\!-\!z_0|^2}\rt)= \frac{2}{\pi}\Pr\lt[\frac{1}{v\!-\!z_0};\ol{\t}\rt]+
O\lt(\frac{1}{|v\!-\!z_0|^2}\rt).
\]
Finally, $K(\ccdot,z_0)$ is unique due to Corollary~\ref{CorLipHarm}.
\end{proof}

\subsection{Proof of the discrete Harnack Lemma}
\label{SectAHarnack}

Below we recall the modification of R.~Duffin's arguments \cite{Duf53} given by U.~B\"ucking
\cite{Buck08}. For the next it is important that the remainder in (\ref{Gasympt}) is of order
$\mesh^2|u\!-\!u_0|^{-2}$ (and not just $\mesh|u\!-\!u_0|^{-1}$).

\begin{proposition}
\label{HmInDisc} Let $u_0\!\in\!\G$ and $R\ge \mesh$. Then
\[
\dhm{\mesh}{u_0}{\{a\}}{B^\mesh_\G(u_0,R)}\asymp \frac{\mesh}{R}\qquad \mathit{for\ all}\
a\in\pa B^\mesh_\G(u_0,R),
\]
i.e., $\const_1\cdot\,\mesh/R\le \dhm{\mesh}{u_0}{\{a\}}{\O^\mesh_\G} \le \,\const_2\cdot
\mesh/R$ for some positive absolute constants.
\end{proposition}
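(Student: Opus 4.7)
The plan is to use the Green's function representation (\ref{oAsG}),
$\dhm{\mesh}{u_0}{\{a\}}{B^\mesh_\G(u_0,R)} = -\tan\theta_{aa_{\mathrm{int}}}\cdot G_{B^\mesh_\G(u_0,R)}(a_{\mathrm{int}};u_0)$,
and to reduce the claim to showing $-G_B(a_{\mathrm{int}};u_0)\asymp\mesh/R$, since assumption~\sps\ gives $\tan\theta_{aa_{\mathrm{int}}}\asymp 1$.

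To estimate $G_B$ I would decompose $G_B = G_\G - G^*_B$ as in (\ref{G*Def}) and invoke the sharpened Kenyon asymptotic of Theorem~\ref{ThmKenyonHarm}, namely $G_\G(v;u_0)=\frac{1}{2\pi}\log|v-u_0|+O(\mesh^2/|v-u_0|^2)$. Since boundary vertices satisfy $|a'-u_0|\in[R,R+C\mesh]$, the shifted discrete harmonic function $\Psi(v):=G^*_B(v;u_0)-\frac{1}{2\pi}\log R$ has boundary values in $[-O(\mesh^2/R^2),\, C\mesh/R]$, and the discrete maximum principle confines it to essentially the same range throughout $B^\mesh_\G(u_0,R)$. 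Combined with Kenyon applied at the interior vertex $a_{\mathrm{int}}$ (where $|a_{\mathrm{int}}-u_0|<R$), this yields
\[
-G_B(a_{\mathrm{int}};u_0)= \tfrac{1}{2\pi}\log(R/|a_{\mathrm{int}}-u_0|) + \Psi(a_{\mathrm{int}}) + O(\mesh^2/R^2),
\]
a sum of two nonnegative terms bounded above by $C\mesh/R$, from which the upper bound on $\dhm$ follows at once.

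For the matching lower bound I would split on $d:=R-|a_{\mathrm{int}}-u_0|>0$. If $d\gtrsim\mesh$, the logarithmic term alone contributes $\gtrsim\mesh/R$, as needed. The delicate regime is $d\ll\mesh$ (nearly tangential edge $(a,a_{\mathrm{int}})$), where the logarithm is too small; here I would exploit the local isoradial geometry at $a_{\mathrm{int}}$. Since the rhombus half-angles at $a_{\mathrm{int}}$ sum to $\pi$ and by~\sps\ each lies in $[\eta,\tfrac{\pi}{2}-\eta]$, the neighbor directions cover $2\pi$ with angular gaps at most $\pi-2\eta$, so there must exist a neighbor $u_s$ whose edge to $a_{\mathrm{int}}$ has outward-radial component of size $\gtrsim\mesh\sin^2\eta$; in the regime $d\ll\mesh$ this forces $u_s$ to sit outside the disc with $|u_s-u_0|-R\gtrsim\mesh$. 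This $u_s$ is therefore a boundary neighbor, and its one-step transition probability $\dhm(a_{\mathrm{int}};u_s;B)\ge\tan\theta_s/\!\sum\!\tan\theta\gtrsim 1$ is uniformly positive by~\sps; isolating this single summand in the expansion
\[
\Psi(a_{\mathrm{int}})=\tfrac{1}{2\pi R}\sum\nolimits_{a'}\dhm(a_{\mathrm{int}};a';B)(|a'-u_0|-R)+O(\mesh^2/R^2)
\]
gives $\Psi(a_{\mathrm{int}})\gtrsim\mesh/R$. The main obstacle I anticipate is precisely the geometric step just sketched, namely verifying that the rhombus-angle bound in~\sps\ is strong enough to produce such an outward boundary neighbor in every near-tangential configuration; this is also the reason the author emphasizes at the beginning of Appendix~A.2 that Kenyon's remainder $O(\mesh^2/R^2)$ is strictly smaller than the leading $\mesh/R$ behavior one is extracting.
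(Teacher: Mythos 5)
Your proposal is correct and follows essentially the same route as the paper: reduce via (\ref{oAsG}) to showing $-G_{B^\mesh_\G(u_0,R)}(a_{\mathrm{int}};u_0)\asymp\mesh/R$, and extract this from the $O(\mesh^2/|u-u_0|^{2})$ remainder in Theorem~\ref{ThmKenyonHarm} combined with the maximum principle. The only divergence is the final step of the lower bound: the paper first establishes $|G_{B^\mesh_\G(u_0,R)}|\asymp\mesh/R$ on the annulus $R-5\mesh\le|u-u_0|\le R-3\mesh$ (where the logarithm already dominates) and then transfers this to $a_{\mathrm{int}}$ using that the Green's function is harmonic and nonpositive near the boundary, i.e., by stepping \emph{inward} a bounded number of uniformly positive-probability steps, whereas you step \emph{outward} to an explicitly constructed boundary neighbor and isolate its contribution to $\Psi(a_{\mathrm{int}})$; both hinge on exactly the same uniform ellipticity supplied by \sps, and your geometric verification of the outward neighbor is sound. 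The one omission is the regime $R\asymp\mesh$ (permitted since only $R\ge\mesh$ is assumed), where the asymptotics of Theorem~\ref{ThmKenyonHarm} give nothing and one concludes, as the paper does in a single sentence, from the fact that the walk reaches any prescribed $a$ in a bounded number of steps each of uniformly positive probability.
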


\begin{proof}
One has $R\le|a\!-\!u_0|\le R\!+\!2\mesh$ for all $a\in\pa B^\mesh_\G(u_0,R)$. Therefore,
(\ref{Gasympt}) gives
\[
\lt|G_{B^\mesh_\G(u_0,R)}(u;u_0) - \frac{1}{2\pi}\,\log\frac{|u\!-\!u_0|}{R}\rt|\le
\frac{\mesh}{\pi R} + \const\cdot \lt(\frac{\mesh^2}{|u\!-\!u_0|^2} + \frac{\mesh^2}{R^2}\rt)
\]
for all $u\ne u_0$. In particular, if $R/\mesh$ is large enough, then
\[
|G_{B^\mesh_\G(u_0,R)}(u;u_0)|\asymp \frac{\mesh}{R}\quad \mathrm{for\ all}\ u\in
B^\mesh_\G(u_0,R):\ R\!-\!5\mesh\le|u\!-\!u_0|\le R\!-\!3\mesh.
\]
Since Green's function is discrete harmonic and nonpositive near $\pa B^\mesh_\G(u_0,R)$, the
same holds true for all $a_{\mathrm{int}}:(a;a_{\mathrm{int}})\in\pa B^\mesh_\G(u_0,R)$. In
view of (\ref{oAsG}), this gives the result for sufficiently large $R/\mesh$. For small (i.e.
comparable to $\mesh$) radii $R$ the claim is trivial, since the random walk can reach $a$
starting from $u_0$ in a finite number of steps.
\end{proof}

\begin{proposition}[\bf mean value property]
Let $H:B^\mesh_\G(u_0,R)\to\R$ be a nonnegative discrete harmonic function. Then
\[
\lt|H(u_0)-\frac{1}{\pi R^2}\!\!\sum_{u\in \Int B^\mesh_\G(u_0,R)}\!\! H(u)\weightG{u}\rt|\le
\const\cdot \frac{\mesh H(u_0)}{R}.
\]
\end{proposition}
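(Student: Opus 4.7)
My plan is to combine two applications of the discrete Green's formula that turn out to be precisely dual to each other on the boundary. The first uses the quadratic test function $\phi(z) := (R^2 - |z-u_0|^2)/4$; by Lemma~\ref{ApproxLemma}(i), $[\D^\mesh \phi](u) \equiv -1$ holds exactly, with no discretization error. Since $H$ is discrete harmonic, Green's formula collapses to a boundary sum. Writing $d_{\mathrm{in}}(a) := R - |a_{\mathrm{int}}-u_0|$ and $d_{\mathrm{out}}(a) := |a-u_0| - R$, both in $[0,2\mesh]$, direct expansion gives $\phi(a_{\mathrm{int}}) = R d_{\mathrm{in}}(a)/2 + O(\mesh^2)$ and $\phi(a) = -R d_{\mathrm{out}}(a)/2 + O(\mesh^2)$, so
\[\textstyle
\sum_u H(u)\weightG{u} \;=\; \tfrac{R}{2}\sum_\pa \tan\theta\bigl[H(a)d_{\mathrm{in}}(a) + H(a_{\mathrm{int}})d_{\mathrm{out}}(a)\bigr] + E_1,
\]
with $|E_1| \le C\mesh^2 \sum_\pa \tan\theta[H(a)+H(a_{\mathrm{int}})]$.

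The second application uses the free Green's function $G_\G(\cdot;u_0)$ from Theorem~\ref{ThmKenyonHarm}. Since $\weightG{u_0}[\D^\mesh G_\G](u_0)=1$ and $[\D^\mesh G_\G](u)=0$ otherwise, Green's formula yields $H(u_0) = \sum_\pa \tan\theta[H(a_{\mathrm{int}})G_\G(a;u_0) - H(a)G_\G(a_{\mathrm{int}};u_0)]$. Substituting Kenyon's asymptotic expanded at distances $R - d_{\mathrm{in}}(a)$ and $R + d_{\mathrm{out}}(a)$ from $u_0$,
\[\textstyle
G_\G(a_{\mathrm{int}};u_0) = C_R - \tfrac{d_{\mathrm{in}}(a)}{2\pi R} + O(\mesh^2/R^2), \qquad G_\G(a;u_0) = C_R + \tfrac{d_{\mathrm{out}}(a)}{2\pi R} + O(\mesh^2/R^2),
\]
the common $C_R$-terms will drop out via the discrete divergence identity $\sum_\pa \tan\theta[H(a)-H(a_{\mathrm{int}})]=0$ (Green's formula applied to $H$ and $G \equiv 1$), leaving
\[\textstyle
2\pi R\,H(u_0) \;=\; \sum_\pa \tan\theta\bigl[H(a)d_{\mathrm{in}}(a) + H(a_{\mathrm{int}})d_{\mathrm{out}}(a)\bigr] + E_2.
\]

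To close the loop I invoke Proposition~\ref{HmInDisc}: the bound $\omega(\{a\})\asymp\mesh/R$ combined with $H(u_0)=\sum_a H(a)\omega(\{a\})$ and $H\ge 0$ yields $\sum_\pa \tan\theta H(a)\le CRH(u_0)/\mesh$, and the divergence identity propagates this to $\sum_\pa \tan\theta H(a_{\mathrm{int}})$. Hence $|E_1|\le C\mesh R H(u_0)$ and $|E_2|\le C\mesh H(u_0)$. Substituting the second identity into the first produces $\sum_u H(u)\weightG{u} = \pi R^2 H(u_0) + O(\mesh R H(u_0))$, which is the claim after dividing by $\pi R^2$. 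The main pitfall to avoid is the temptation to estimate either linear moment $\sum_\pa \tan\theta H(a)d_{\mathrm{in}}(a)$ or $\sum_\pa \tan\theta H(a_{\mathrm{int}})d_{\mathrm{out}}(a)$ in isolation: the naive bound $d_{\mathrm{out}}\le 2\mesh$ together with $\sum_\pa \tan\theta H \sim RH(u_0)/\mesh$ would give only $O(R^2 H(u_0))$, the same order as the main term itself. The whole argument relies on recognizing that this linear-moment sum is \emph{exactly} what Kenyon's asymptotic for $G_\G$ computes, up to the harmless error $E_2$, so it never has to be controlled pointwise.
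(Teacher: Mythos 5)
Your argument is correct and is in substance the same as the paper's: the paper simply packages your two test functions into the single function $F(u)=G_\G(u;u_0)-\frac{\log R}{2\pi}+\frac{R^2-|u-u_0|^2}{4\pi R^2}$, whose $O(\mesh^2/R^2)$ boundary values encode exactly the cancellation of linear moments you obtain by comparing the two Green's-formula identities, and it likewise closes with Proposition~\ref{HmInDisc} and $H\ge 0$ to bound the boundary sums by $\const\cdot RH(u_0)/\mesh$. Both proofs hinge on the same two points you correctly isolate: the exactness of $\D^\mesh$ on quadratics (Lemma~\ref{ApproxLemma}(i)) and the $O(\mesh^2|u-u_0|^{-2})$ remainder in Theorem~\ref{ThmKenyonHarm}.
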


\begin{proof}
Let
\[
F(u):=G_\G(u;u_0) - \frac{\log R}{2\pi} + \frac{R^2-|u\!-\!u_0|^2}{4\pi R^2},\quad u\in
B^\mesh_\G=B^\mesh_\G(u_0,R).
\]
Note that $[\D^\mesh F](u)= -(\pi R^2)^{-1}$ for all $u\ne u_0$ (see
Lemma~\ref{ApproxLemma}(i)). Using (\ref{Gasympt}), it is easy to see that
$F(u)=O(\mesh^2/R^2)$, if $|u\!-\!R|\le \const\cdot\mesh$. The discrete Green's formula
(\ref{GreenFormula}) applied to $H$ and $F$ gives
\[
H(u_0)-\frac{1}{\pi R^2}\!\!\sum_{u\in \Int B^\mesh_\G}\! H(u)\weightG{u}\ = \sum_{a\in \pa
B^\mesh_\G}\tan\theta_{aa_{\mathrm{int}}}\cdot
[H(a_{\mathrm{int}})F^\pm(a)-H(a)F^\pm(a_{\mathrm{int}})],
\]
where the functions $F^\pm=F\pm \const\cdot\mesh^2/R^2$ are positive/negative, respectively,
near $\pa B^\mesh_\G(u_0,R)$. Using $H\ge 0$, one obtains
\[
-\const\cdot\frac{\mesh^2}{R^2}\sum_{a\in \pa B^\mesh_\G} H(a) \le H(u_0)-\frac{1}{\pi
R^2}\!\!\sum_{u\in \Int B^\mesh_\G}\! H(u)\weightG{u}\le
\const\cdot\frac{\mesh^2}{R^2}\sum_{a\in \pa B^\mesh_\G} H(a_{\mathrm{int}}),
\]
Both sums are comparable to $\mesh H(u_0)/R$ due to Proposition~\ref{HmInDisc}.
\end{proof}

\begin{proof}[\bf Proof of Proposition~\ref{PropHarnack}]
(i) Applying the mean value property for $B^\mesh_\G(u_0,R)$ and
$B^\mesh_\G(u_1,R\!-\!2\mesh)$ and taking into account that $H(u_1)\le \const\cdot H(u_0)$,
one obtains
\[
\pi R^2 H(u_0) - \pi (R\!-\!2\mesh)^2 H(u_1)\ \ \ =\!\!\!\!\!\!\! \sum_{u\in
B^\mesh_\G(u_0,R)\setminus B^\mesh_\G(u_1,R-2\mesh)} H(u)\weightG{u} + O(\mesh RH(u_0)).
\]
Proposition~\ref{HmInDisc} gives
\[
\sum_{u\in B^\mesh_\G(u_0,R)\setminus B^\mesh_\G(u_1,R\!-\!2\mesh)} H(u)\weightG{u}\ \
\asymp\!\!\!\!\! \sum_{u\in B^\mesh_\G(u_0,R)\setminus B^\mesh_\G(u_1,R-2\mesh)} \mesh^2H(u) \
=\ O(\mesh RH(u_0)),
\]
so $R^2\cdot(H(u_1)\!-\!H(u_0))=O(\mesh RH(u_0))$.

\smallskip

\noindent (ii) Let $u_1=v_0v_1v_2...v_{k-1}v_k=u_2$ be some path connecting $u_1$ and $u_2$
inside $B^\mesh_\G(u_0,r)$ (one can choose this path so that $k\le \const\cdot \mesh^{-1}r$).
Since $B^\mesh_\G(v_j,R\!-\!r)\ss B^\mesh_\G(u_0,R)$,
\[
\frac{H(u_2)}{H(u_1)}=\prod_{j=0}^{k-1}\frac{H(v_{j+1})}{H(v_j)}\le
\lt[1+\const\cdot\frac{\mesh}{R-r}\rt]^{\const\cdot {r}/{\mesh}}\!\!\!\!\le\
\exp\lt[\const\cdot\frac{r}{R-r}\rt].\qedhere
\]
\end{proof}

\small \thebibliography{LSchW04}

\bibitem [BMS05] {BMS05} A.~Bobenko, Ch.~Mercat, Yu.~Suris. Linear and nonlinear theories of
discrete analytic functions. Integrable structure and isomonodromic Green's function. {\em J.
Reine Angew. Math.}, 583:117--161, 2005.

\bibitem [B\"uck08] {Buck08} U.~B\"ucking. Approximation of conformal mappings by circle
patterns. {\em Geom. Dedicata}, 137:163--197, 2008.

\bibitem [Cia78] {Cia78} P.~G.~Ciarlet {\em The finite element method for elliptic problems.}
Studies in Mathematics and its Applications, Vol. 4. North-Holland Publishing Co.,
Amsterdam-New York-Oxford, 1978.

\bibitem [CS08] {ChSm08} D.~Chelkak, S.~Smirnov. Universality and conformal invariance
in the Ising model. In Oberwolfach report No. 25/2008 (Stochastic Analysis Workshop).

\bibitem [CS09] {ChSm09} D.~Chelkak, S.~Smirnov. Universality in the 2D Ising model and
conformal invariance of fermionic observables. {\em Invent. Math.}, to appear. Preprint
\texttt{arXiv:0910.2045}, 2009.

\bibitem [CFL28] {CFL28} R.~Courant, K.~Friedrichs, H.~Lewy.
{\"{U}}ber die partiellen {D}ifferenzengleichungen der mathematischen {P}hysik. {\em Math.
Ann.}, 100:32--74, 1928.

\bibitem [Duf53] {Duf53} R.~J.~Duffin. Discrete potential theory. {\em Duke Math. J.},
20:233--251, 1953.

\bibitem [Duf68] {Duf68} R.~J.~Duffin. Potential theory on a rhombic lattice. {\em J. Combinatorial
Theory}, 5:258--272, 1968.

\bibitem [DN03] {DN03}  I.~A.~Dynnikov, S.~P.~Novikov,
Geometry of the triangle equation on two-manifolds, {\em Mosc. Math. J.}, 3(2):419–-438, 2003.

\bibitem [Fer44] {Fer44} J.~Ferrand, Fonctions pr\'eharmoniques et fonctions pr\'eholomorphes.
{\em Bull. Sci. Math.}, 2nd ser., 68:152--180, 1944.

\bibitem [GM05] {GM05} J.~B.~Garnett, D.~E.~Marshall. {\em Harmonic Measure}. New Mathematical
Monographs Series (No. 2), Cambridge University Press, New York, 2005.

\bibitem [L-F55] {L-F55} J.~Lelong-Ferrand.
\newblock {\em Repr\'esentation conforme et transformations \`a int\'egrale de
{D}irichlet born\'ee}. Gauthier-Villars, Paris, 1955.

\bibitem [Ken02] {Ken02} R.~Kenyon. {The Laplacian and Dirac operators on critical planar graphs.}
{\em Invent. Math.}, 150:409--439, 2002.

\bibitem [KS04] {KSch04} R.~Kenyon, J.-M.~Schlenker. Rhombic embeddings of planar
quad-graphs, {\em Trans. AMS}, 357:3443--3458, 2004.

\bibitem [LSW04] {LSchW04} G.~F.~Lawler, O.~Schramm, W.~Werner.
\newblock Conformal invariance of planar loop-erased random walks and uniform
spanning trees. \newblock {\em Ann. Probab.}, 32(1B):939--995, 2004.

\bibitem [Mer01] {Mer01} Ch.~Mercat. Discrete Riemann Surfaces and the Ising Model.
{\em Comm. Math. Phys.}, 218(1):177--216, 2001.

\bibitem [Mer02] {Mer02} Ch.~Mercat.  Discrete Polynomials and Discrete Holomorphic Approximation.
\texttt{arXiv:} \texttt{math-ph/0206041}

\bibitem [Mer07] {Mer07} Ch.~Mercat. Discrete Riemann Surfaces. Handbook of Teichm\"uller theory.
Vol.~I, 541--575. Z\"urich: European Mathematical Society (EMS), 2007.

\bibitem [Pom92] {Pom92} Ch.~Pommerenke. {\em Boundary Behaviour of Conformal Maps},
A Series of Comprehensive Studies in Mathematics 299. Springer-Verlag, Berlin, 1992.

\bibitem [Smi06] {Sm06} S.~Smirnov.
\newblock {Towards conformal invariance of 2D lattice models.}
\newblock {Proceedings of the international congress of mathematicians (ICM),
Madrid, Spain, August 22--30, 2006. Vol.~II: Invited lectures, 1421-1451. Z\"urich: European
Mathematical Society (EMS)}, 2006.

\bibitem [Smi10a] {Sm07} S.~Smirnov. Conformal invariance in random cluster models.
I. Holomorphic spin structures in the Ising model. {\em Ann. of Math. (2)}, 172:101--133,
2010.

\bibitem [Smi10b] {Sm10} S.~Smirnov.
\newblock{Discrete complex analysis and probability.}
\newblock{Rajendra Bhatia (ed.) et al., Proceedings of the International Congress of Mathematicians (ICM),
Hyderabad, India, August 19--27, 2010, Volume I: Plenary lectures. New Delhi, World
Scientific. (2010)}

\end{document}